\documentclass[a4paper,12pt,reqno]{amsart}
\usepackage{}
\usepackage{amssymb, amsmath, amsfonts, amscd}
\usepackage{mathrsfs, latexsym, array, longtable}
\usepackage[all,ps,cmtip]{xy}
\usepackage{xcolor, bm, enumitem}
\usepackage{hyperref}

\usepackage[noadjust]{cite}
\usepackage{cases}
\usepackage{enumitem}
\usepackage{indentfirst}
\usepackage{comment}
\usepackage{geometry}
\usepackage[dvipsnames]{xcolor}

\hypersetup{
	colorlinks=true,
	citecolor=orange,
	linkcolor=blue,
	urlcolor=teal,
}

\title{On Effective Iitaka Fibration Indices for Stable Minimal Models with Large Iitaka Volumes}
\author{Hexu Liu}
\date{\today}

\address{}
\email{}

\address{\rm Hexu Liu, Yau Mathematics Science Center, Tsinghua University, Haidian District, Beijing, 100084, China}
\email{liuhexu@mail.tsinghua.edu.cn}

\thanks{}

\newcommand{\roundup}[1]{\ulcorner{#1}\urcorner}
\newcommand{\rounddown}[1]{\llcorner{#1}\lrcorner}

\newcommand\vol{\text{\rm vol}}
\newcommand{\ivol}{\text{\rm Ivol}}
\newcommand{\bN}{\mathbb{N}}
\newcommand{\bZ}{\mathbb{Z}}
\newcommand{\bQ}{\mathbb{Q}}
\newcommand{\bR}{\mathbb{R}}
\newcommand{\bC}{\mathbb{C}}
\newcommand{\bP}{\mathbb{P}}

\newcommand{\lct}{\text{\rm lct}}

\newcommand{\fraction}[1]{\langle{#1}\rangle}
\newcommand{\OO}{\mathcal{O}}
\newcommand{\mult}{\text{\rm mult}}
\newtheorem{thm}{Theorem}[section]
\newtheorem{prop}[thm]{Proposition}
\newtheorem{lemma}[thm]{Lemma}

\newtheorem{defi}[thm]{Definition}
\newtheorem{eg}[thm]{Example}
\newtheorem{cor}[thm]{Corollary}
\newtheorem{cons}[thm]{Construction}

\begin{document}

\begin{abstract} 
We generalize \cite[Theorem 1.1]{CL24} to stable minimal models. Given a pair $(V,C)$ that admits a stable minimal model with fixed dimension, fixed coefficient set, and bounded relative volume, we study when the linear system $|m(K_V+C)|$ induces an Iitaka fibration, assuming the Iitaka volume of $K_V+C$ is sufficiently large.
\end{abstract}

\maketitle

\pagestyle{myheadings}
\markboth{\hfill Hexu Liu\hfill}{\hfill On Effective Iitaka Fibration Indices for Stable Minimal Models with Large Iitaka Volumes\hfill}
\numberwithin{equation}{section}

\tableofcontents
\clearpage

\section{Introduction}

Throughout this paper, we work over the complex number field $\bC$. \par

Studying pluri-canonical linear systems is an important mission in birational geometry. By \cite{HM06,Tak06,Tsu06}, for every positive integer $n$ there exists an optimal constant $r_n$ depending only on $n$ such that, for any smooth projective variety $V$ of general type and any integer $m\geq r_n$, the $m$-canonical map $\phi_{|mK_V|}$ is birational. A number of works, for example \cite{Bom73,CC15,C21}, have computed the explicit value of $r_n$ when $n$ is small. A further challenge is to establish effective birationality for varieties whose volumes are sufficiently large. By \cite[Theorem 1.1]{CL24}, we know that for any integer $n > 1$, there is a number $K>0$ such that for any smooth projective variety $V$ with $\vol(V) > K$, $m$-canonical map is birational for any $m \geq r_{n-1}$. A crucial ingredient of the proof is \cite[Theorem 1.5]{CL24}, an extension theorem for fibrations.\par 

In this paper, we extend the results of \cite{CL24} by establishing an effective Iitaka fibration result for pseudo-effective pairs that admit a stable minimal model and have sufficiently large Iitaka volumes. As a key step, we also generalize the extension theorem \cite[Theorem 1.5]{CL24}.

Here we recall the definitions of stable minimal models that will be used throughout the article; we refer to \cite{bir23b, Zhu25}. Our definition is slightly more general.

\begin{defi}\label{main-defi-for-smm}\em 
    Let $d > 0$ be an integer, $\Phi \subseteq[0,1]$ be a set, $u,v>0$ be two real numbers. A {\em klt $(d,\Phi, {\leq}u)$-stable minimal model} $(X,B),A$ consists of a klt pair $(X,B)$ and a divisor $A$ on $X$ such that
    \begin{itemize}
        \item[(1)] $X$ is of dimension $d$, and the coefficients of $B$ belong to $\Phi$; 
        \item[(2)] $K_X + B$ is semi-ample and defines a contraction $h: X \to Z$;
        \item[(3)] $A$ is an integral divisor that is big and semi-ample over $Z$ with $0 \leq \vol(A|_F) \leq u$ for a general fiber $F$ of $h:X \to Z$.
    \end{itemize}
    We say that $((X,B),A)$ is a {\em klt $(d,\Phi,{\leq}u,{\leq}v({\rm resp.} {=}v, {\geq}v))$-stable minimal model} if further
    \begin{itemize}
        \item[(4)] $\ivol(K_X + B) \leq v$ (resp. ${=}v, {\geq}v$) holds.
    \end{itemize}
    We say that $(X,B)$ {\em admits a klt $(d,\Phi, {\leq}u, {\leq}v({\rm resp. } {=}v, {\geq v}))$-stable minimal model} if there exists a klt $(d,\Phi, {\leq}u, {\leq}v({\rm resp. } {=}v, {\geq v}))$-stable minimal model $((X',B'),A')$ such that $(X', B')$ is a weak lc model of $(X,B)$ (for the definition of weak lc model, see \cite[Definition 2.1]{Bir12}).
\end{defi}

    Using the methods of \cite{BZ16,Bir21b,Zhu25}, one obtains the following result on effective Iitaka fibration indices for stable minimal models.
    
    \begin{thm}\label{main-Effective-Iitaka}
        Let $n>0$ be an integer, $\Phi \subseteq [0,1]$ be a DCC set, $u>0$ be a real number. Then there exists an integer $m_0 > 0$ such that the following holds. Assume that $(V,C)$ admits a klt $(n,\Phi,{\leq}u)$-stable minimal model {\rm (see Section \ref{stable-minimal-models} for definitions)}. Then, for any positive integer $m$ that is divisible by $m_0$, the linear system $|\rounddown{m (K_V + C)}|$ defines an Iitaka fibration. 
    \end{thm}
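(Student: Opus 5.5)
We first reduce to the stable minimal model itself. Let $((X,B),A)$ be a klt $(n,\Phi,{\leq}u)$-stable minimal model with $(X,B)$ a weak lc model of $(V,C)$. Take a common resolution $p\colon W\to V$, $q\colon W\to X$. The weak lc model condition gives
\[
p^*(K_V+C)=q^*(K_X+B)+E
\]
with $E\geq 0$ and $q$-exceptional. Hence for every $m$,
\[
H^0(V,\rounddown{m(K_V+C)})\cong H^0(X,\rounddown{m(K_X+B)}),
\]
and the two maps agree under these birational identifications. Since $K_X+B$ is semi-ample, so is $K_V+C$ up to this correction, and the Iitaka fibration of $K_V+C$ is birational to $h\colon X\to Z$. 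It therefore suffices to find $m_0$, depending only on $(n,\Phi,u)$, such that for all $m$ divisible by $m_0$:
\begin{itemize}
\item $\rounddown{m(K_X+B)}$ equals $m(K_X+B)$;
\item $m(K_X+B)$ is Cartier;
\item $|m(K_X+B)|$ is base point free and defines $h$.
\end{itemize}

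Next we replace $\Phi$ by a finite set. Since $\Phi$ is DCC and the general fibres $(F,B_F)$ of $h$ are klt log Calabi--Yau pairs of dimension $\leq n$ with coefficients in $\Phi$, ACC for log canonical thresholds and global ACC (Hacon--McKernan--Xu) should let us replace $\Phi$ by a finite set. If that fails directly, we would instead use the structure theory of stable minimal models in \cite{bir23b}. The constraint $\vol(A|_F)\le u$ together with $A$ relatively big and semi-ample makes the fibres $(F,B_F)$ with polarization $A|_F$ belong to a bounded family. This is Birkar's boundedness of polarized log Calabi--Yau fibres, the $\epsilon$-lc being automatic after finite-coefficient reduction for klt CY of bounded polarized volume. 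Boundedness of the fibres then gives a uniform Cartier index $I$ for $K_F+B_F$ near the generic fibre, and ideally a finite set of coefficients for $B$, after passing to the horizontal part.

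The key step is the canonical bundle formula. Write
\[
K_X+B\sim_{\bQ} h^*(K_Z+B_Z+M_Z).
\]
By \cite{Bir21b} and \cite{BZ16}, using the boundedness of the fibres from the previous paragraph, there is an integer $p$ depending only on $(n,\Phi,u)$ such that:
\begin{itemize}
\item the coefficients of $B_Z$ lie in a DCC set $\Psi$ depending only on $(n,\Phi,u)$;
\item $pM$ is b-Cartier and b-nef (b-free after a bounded modification);
\item $p(K_X+B)\sim h^*\bigl(p(K_Z+B_Z+M_Z)\bigr)$.
\end{itemize}
Then $(Z,B_Z+M_Z)$ is a generalized klt pair of dimension $\leq n$ with $K_Z+B_Z+M_Z$ ample, coefficients in $\Psi$, and nef part with bounded index. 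By effective birationality for generalized pairs of general type (\cite[Theorem 1.3]{BZ16}), there is $m_1$ such that $|\rounddown{m(K_Z+B_Z+M_Z)}|$ is birational for all $m$ divisible by $m_1$. On $X$ we need the sections to be pullbacks, which holds once $p\mid m$, so that
\[
H^0(X,m(K_X+B))=H^0(Z,m(K_Z+B_Z+M_Z)).
\]
Birationality of the map on $Z$ then means $|m(K_X+B)|$ defines the Iitaka fibration, and we take $m_0=p\,m_1$.

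The main obstacle is the second step: obtaining uniform control over the fibration, namely bounded Cartier index on fibres and a bounded index for the moduli part with DCC discriminant coefficients, uniformly over the whole class. This is exactly where the hypothesis $\vol(A|_F)\le u$ enters. We would first try to invoke \cite{bir23b} together with \cite{Zhu25}, whose main results give boundedness of the fibres and b-semi-ampleness with bounded index of the moduli divisor for stable minimal models. If the $\epsilon$-lc hypothesis needed there is not automatic, we would prove it from the DCC assumption via ACC for lc thresholds.
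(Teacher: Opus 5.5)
Your argument is essentially the paper's. After passing to the stable minimal model, the paper invokes \cite{Bir21b}/\cite{Zhu25} for a uniform $q$ with $K_V+C\sim_q h^*(K_Y+B_Y+M_Y)$, DCC discriminant coefficients, and moduli part $\sum p_iM_{Y,i}$ with $p_i$ in a finite set, and then applies \cite{BZ16} on the base, exactly as in your third paragraph. The one thing to correct is your preliminary list of targets ($\rounddown{m(K_X+B)}=m(K_X+B)$, Cartier, base point free) and your reduction of $\Phi$ to a finite set, which global ACC gives only for the horizontal part of $B$. These cannot hold uniformly when $\Phi$ is infinite, because vertical components of $B$ may carry arbitrary coefficients of $\Phi$. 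They are never actually used, and the identity you need is $H^0(X,\rounddown{m(K_X+B)})\cong H^0(Z,\rounddown{m(K_Z+B_Z+M_Z)})$ for your $p\mid m$, with the round-downs kept.
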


    For any integer $n>0$, DCC set $\Phi \subseteq[0,1]$ and real number $u >0$, we denote by $r(n,\Phi,u)$ the minimal integer $m_0>0$ such that above theorem holds. \par 
    Our main result of this paper is to prove the following theorem on stable minimal models with large Iitaka volumes. 

    \begin{thm}\label{main-for-Iitaka-large-volume}
        Let $n> 1$ be an integer and $\Phi \subseteq [0,1]$ be a DCC set, $u,\epsilon>0$ be real numbers. Then, there exists a real number $v>0$ that satisfies the following. For any $(V,C)$ that admits a klt $(n,\Phi,{\leq}u,{\geq}v)$-stable minimal model {\rm (see Section \ref{stable-minimal-models})} and $(V,C)$ is $\epsilon$-lc, for any $m>0$ such that $mC$ is integral and $m$ is divisible by $r(k,\Phi,u)$ for any $1 \leq k\leq n-1$, the linear system $|m(K_V + C)|$ induces an Iitaka fibration. 
    \end{thm}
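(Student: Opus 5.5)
We reduce to the stable minimal model and then imitate the strategy of \cite[Theorem 1.1]{CL24}, with the base of the Iitaka fibration playing the role of the variety of general type. Let $((X,B),A)$ be a klt $(n,\Phi,{\leq}u,{\geq}v)$-stable minimal model with $(X,B)$ a weak lc model of $(V,C)$. Since $(V,C)$ is klt and $mC$ is integral, sections of $|m(K_V+C)|$ agree with sections of $|m(K_X+B)|$ after pulling back to a common resolution. So it suffices to show that $|m(K_X+B)|$ defines the contraction $h\colon X\to Z$ birationally, that is, that $\phi_{|m(K_X+B)|}$ is birational onto its image when composed through $Z$.

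Write $K_X+B\sim_{\bQ} h^*(K_Z+B_Z+M_Z)$ via the canonical bundle formula, with $\dim Z=k\le n$ and $\vol(K_Z+B_Z+M_Z)=\ivol(K_X+B)\ge v$. Since $A$ is relatively big of bounded volume $\le u$, and the coefficients lie in the DCC set $\Phi$, the general fibres $(F,B_F)$ belong to a bounded family with $K_F+B_F\sim_\bQ 0$. Hence the generalized pair $(Z,B_Z+M_Z)$ has coefficients in a DCC set and its moduli part has bounded Cartier index (as in \cite{Bir21b,Zhu25}). The $\epsilon$-lc hypothesis on $(V,C)$ is transferred to $(Z,B_Z+M_Z)$ being $\delta$-lc for some $\delta=\delta(n,\Phi,u,\epsilon)>0$.

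If $k<n$, the case is easy provided $m$ is divisible by $r(k,\Phi,u)$: by Theorem \ref{main-Effective-Iitaka} (in the form for pairs with a stable minimal model of dimension $k$, applied to a suitable model realizing the base data), the system already defines the Iitaka fibration. The substantive case is when $k$ is maximal relative to the available indices, and in particular the case $k=n$, where $X$ is of general type with $\vol(K_X+B)\ge v$. Here we follow \cite{CL24}. For $v\gg0$, we use the large volume to produce, through two very general points $x,y$, a non-klt centre of $(X,B+\Delta)$ with $\Delta\sim_\bR t(K_X+B)$ and $t<1$ small. The standard Hacon--M\textsuperscript{c}Kernan / Takayama tie-breaking then cuts the centre down to either a point or a covering family of positive-dimensional centres $G$. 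If the centres are points, Kawamata--Viehweg vanishing gives separation of $x,y$ for all $m\ge2$ with $mB$ integral.

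If the minimal centres $G$ are positive-dimensional, they form a family $\mathcal G\to T$ of dimension $\dim G=k'\le n-1$. For a bounded family of such centres, the adjunction-induced pairs $(G,B_G)$ have bounded volume of their own stable models, so they admit stable minimal models of dimension $k'\le n-1$ with coefficients in a DCC set depending only on $(n,\Phi)$. The generalized extension theorem (the analogue of \cite[Theorem 1.5]{CL24} proved in this paper) lifts sections of $|m(K_G+B_G)|$ to $X$. Since $m$ is divisible by $r(k',\Phi,u)$, Theorem \ref{main-Effective-Iitaka} shows that these sections separate general points of $G$. Combining this with the separation of distinct centres gives birationality.

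The main obstacle is the last step. One must ensure that the adjunction data on $G$ genuinely form a klt $(k',\Phi,{\leq}u)$-stable minimal model, with the same $\Phi$ and $u$, so that $r(k',\Phi,u)$ applies. One must also ensure that the extension theorem holds in the Iitaka (non-big) setting. To control the coefficients and the different, I would use the $\epsilon$-lc hypothesis together with bounded-family arguments. Only after this is in place does the choice of $v$, large enough to force the existence of small-$t$ centres, follow.
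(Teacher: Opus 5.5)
There is a genuine gap, and it sits exactly where the theorem goes beyond \cite{CL24}.

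Your ``easy'' case $k=\dim Z<n$ does not work. The constant $r(k,\Phi,u)$ is the index from Theorem~\ref{main-Effective-Iitaka} for \emph{$k$-dimensional} pairs admitting klt $(k,\Phi,{\leq}u)$-stable minimal models. The base $(Z,B_Z+\mathbf{M}_Z)$ of your $n$-dimensional model is instead a generalized pair whose discriminant coefficients and moduli part come from the $(n-k)$-dimensional fibres of $h$. There is no ``model realizing the base data'' in the class that defines $r(k,\Phi,u)$. Applied to $(V,C)$ itself, Theorem~\ref{main-Effective-Iitaka} only yields $r(n,\Phi,u)$, which is not assumed to divide $m$. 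So the non-big case needs the large Iitaka volume just as much as the big case does.

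In the big case the same problem reappears, as you yourself note. Adjunction to a minimal non-klt centre $G$ introduces a different, and in higher codimension also a moduli part. Hence $(G,B_G)$ need not have coefficients in $\Phi$, nor be a stable minimal model with fibre volume ${\leq}u$, and $r(k',\Phi,u)$ cannot be invoked.

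Finally, ``choose $v$ large enough to force small-$t$ centres'' is circular as stated. The $t$ required by the extension theorem depends on an upper bound for the Iitaka volume of the centres (see the first example in Section~\ref{examples}). So the constants must be chosen hierarchically.

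The paper resolves all three points by working on the base of the Iitaka fibration, in every Iitaka dimension $d$.

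On a good minimal model, $K_V+C\sim_{\bQ}h^*D$ with $D$ ample on $Y$ and $\vol(D)=\ivol(K_V+C)$. One first fixes $\lambda(k,w)$ to be the $t$ of Theorem~\ref{Extention-for-2-fibers-DCC} for fibres with Iitaka volume ${\leq}w$. Theorem~\ref{construct-fibration} (birational families of intrinsic tigers on $Y$, after Lacini and Wang) then supplies $v_{d-1}\ge\dots\ge v_1$ and $v$. Two cases follow.

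\emph{Case 1: $D$ is potentially birational.} Then Theorem~\ref{app-for-kol-inj}, where Koll\'ar injectivity replaces Kawamata--Viehweg, shows that $|m(K_V+C)|$ separates general $h$-fibres for all $m\ge 2$ with $mC$ integral.

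\emph{Case 2: there is a tiger fibration $g\colon Y\to T$ with $\vol(D|_F)\le v_k$.} Then $f=g\circ h$ satisfies $\mathcal{M}_2(\lambda(k,v_k))$. The key point is that a general fibre $X=h^{-1}(F)$ is a fibre of the morphism $f$ and is saturated for $h$. Consequently:
\begin{itemize}
\item $(K_V+C)|_X=K_X+C|_X$, with no different;
\item $(X,C|_X)$ is $\epsilon$-lc with coefficients in $\Phi$;
\item it admits a klt stable minimal model with the same $u$ and Iitaka volume ${\leq}v_k$;
\item $\dim X=n-d+k\le n-1$.
\end{itemize}
Hence $r(\dim X,\Phi,u)\mid m$ gives the Iitaka fibration on each general fibre. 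The two-fibre surjection of Theorem~\ref{Extention-for-2-fibers-DCC} then both lifts these sections and separates distinct fibres.
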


A key step toward the above main theorem is the following extension theorem for stable-minimal-model-type and $\mathcal{M}_2(t)$-type fibrations. The relevant definitions  are given in Section \ref{Special-fibrations}.

\begin{thm}\label{main-Extention-for-2-fibers-DCC}
    Let $d>0$ be an integer, $\Phi\subseteq[0,1]$ be a DCC set, $u,v,\epsilon>0$ be real numbers. Then, there exists a real number $t>0$ that satisfies the following. Assume that
        \begin{itemize}
            \item[(1)] $f: (V,C) \to T$ admits a klt $(d,\Phi,{\leq}u, {\leq}v)$-SMM-type fibration such that $(X,C|_X)$ is $\epsilon$-lc for a general fiber $X$; {\rm (see Definition \ref{admits-klt-stable-minimal-model})}
            \item[(2)] $f: (V,C) \to T$ satisfies contidion $\mathcal{M}_2(t)$; {\rm (see Definition \ref{defi-for-C(t)})}
            \item[(3)] $(V,C)$ has a good minimal model.
        \end{itemize}
    Then, for any integer $m \geq 2$ with $mC$ being integral, there is a surjective map 
    $$H^0(V, m(K_V +C)) \to H^0(X_1, m(K_{X_1} + C|_{X_1})) \oplus H^0(X_2, m(K_{X_2} + C|_{X_2})),$$
    where $X_1,X_2$ are two different general fibers.
\end{thm}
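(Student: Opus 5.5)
We will follow the strategy of \cite[Theorem 1.5]{CL24}: produce a vanishing theorem (Kawamata--Viehweg / Nadel type, or Koll\'ar-type injectivity/torsion-freeness for the pushforward) for the pair after twisting by the ideal of $X_1\cup X_2$, and use the largeness condition $\mathcal{M}_2(t)$ to create a suitable auxiliary divisor. Concretely, since $(V,C)$ has a good minimal model and surjectivity of the restriction map is a birational statement for pluri-log-canonical sections (the sections on general fibres are unchanged by passing to a model where the fibres are unaffected), we will first replace $(V,C)$ by a log resolution of a good minimal model and assume $V$ is smooth, $K_V+C\sim_{\bQ} P+N$ with $P$ semi-ample (pulled back from the lc model) and $N\geq 0$ exceptional with $H^0(mN)$ fixed. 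Writing $m(K_V+C)=K_V+C+(m-1)(K_V+C)$, the desired surjection follows from
$$H^1\bigl(V,\ \mathcal{I}\otimes\OO_V(m(K_V+C))\bigr)\to H^1\bigl(V,\OO_V(m(K_V+C))\bigr)$$
being injective (or the $H^1$ of the twist vanishing), where $\mathcal{I}$ is a multiplier ideal cosupported exactly on $X_1\cup X_2$ near the generic points of the fibres.

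The key step is to build, using condition $\mathcal{M}_2(t)$ with $t\gg 0$, an effective $\bQ$-divisor $D\sim_{\bQ}\lambda(K_V+C)$ with $\lambda<m-1$ (we aim for $\lambda\leq 1$, so $m\geq 2$ suffices) such that $\mathcal{J}(V,C+D)$ is, generically over $T$, the ideal of $X_1\sqcup X_2$, and $(V,C+D)$ is klt near a general fibre away from $X_1\cup X_2$. The SMM-type hypothesis (1) is used here: a general fibre $(X,C|_X)$ is $\epsilon$-lc with a stable minimal model of bounded relative volume $\le u$ and Iitaka volume $\le v$ and DCC coefficients, so by the boundedness of such stable minimal models (Birkar, Zhu) the fibres vary in a bounded family; this gives a uniform lower bound on log canonical thresholds and a uniform bound on the multiplicity needed. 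Then $\mathcal{M}_2(t)$, which we expect to say that $K_V+C$ is ``large along the base'' (e.g. $(K_V+C)-t\,f^*H$ is pseudo-effective for an ample generator, or $\ivol$ over $T$ of the horizontal part is $\ge t$), lets us pick a divisor pulled back from $T$ with multiplicity $>\dim T$ at two general points $t_1,t_2$, i.e. $D=f^*D_T+(\text{small})$ with $D_T$ singular exactly at $t_1,t_2$, and tie-breaking with the $\epsilon$-lc and boundedness data to isolate exactly the two fibres as lc centres of minimal type; $t$ is chosen from $d,\Phi,u,v,\epsilon$ so that the needed multiplicity fits inside $\lambda(K_V+C)$.

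Given $D$, we apply Nadel vanishing / the Koll\'ar--Ambro--Fujino torsion-free and injectivity theorems to $K_V+C+D+(m-1-\lambda)(K_V+C)$, where $(m-1-\lambda)(K_V+C)$ is nef (semi-ample) on the good model and big over the lc model relative to the needed morphism; adjunction to the lc centres $X_i$ (which are fibres, so $K_V|_{X_i}=K_{X_i}$ and $C|_{X_i}$ is the induced boundary) gives the surjection onto $\bigoplus_i H^0(X_i,m(K_{X_i}+C|_{X_i}))$. The main obstacle we anticipate is the construction of $D$: making the lc centres be the whole fibres $X_1,X_2$ (not smaller subvarieties) while keeping $\lambda$ small enough for $m=2$; we would first try to take $D$ purely as a pullback from the lc model $Z$ of $(V,C)$, exploiting that on $Z$ the divisor is big and of large volume, and then use fibrewise lc-threshold bounds coming from boundedness of the stable minimal models to handle the vertical contribution.
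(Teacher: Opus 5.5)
There is a genuine gap, and it sits where the paper does most of its work: the horizontal part of the negative part.

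You pass to a resolution of a good minimal model, write $K_V+C\sim_{\bQ}P+N$, and treat $N$ as invisible on the general fibres. But a good minimal model of $(V,C)$ is not reached by an MMP over $T$. It may contract divisors that are horizontal over $T$ and are not in $N_\sigma(K_X+C|_X)$, so the fibres \emph{are} affected. Your birational-invariance remark holds only for an MMP over $T$, which is what the paper does (Lemma~\ref{MMP-inv-for-SMM-type}) to reach a weak SMM-type fibration with $K_X+C|_X$ semi-ample.

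In any Nadel- or Koll\'ar-type argument, the term $(m-1-\lambda)N$ must be absorbed into the boundary. Near $X_i$ it then adds horizontal cosupport, and you only lift sections vanishing along $N|_{X_i}$. This is a real obstruction, not an artefact. In the first example of Section~\ref{examples} ($\mathbb{F}_1\to\mathbb{P}^1$, $C=\frac12\sum H_i+\frac12 E$) one has $N_\sigma(2(K_V+C))=3E$ with $E$ a section, and restriction to a fibre is not surjective.

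The missing ingredient is Theorem~\ref{Res-for-Negative-parts}: for $t$ small, $N_\sigma(K_V+C)|_X=N_\sigma(K_X+C|_X)$, which is $0$ on the weak SMM-type model. This is what gives $L|_{E_2}\le 0$ in the injectivity step. It is proved through Theorem~\ref{Vertical-for-ext-locus}. Suppose a $(K_V+C)$-negative extremal contraction $\phi$ had horizontal exceptional locus. Then a general non-trivial fibre of $\phi$ is not contracted by $f$, so it meets two general fibres $X_1,X_2$ of $f$. These are non-klt centres of $(V,C+\Delta)$, and $X_2$ is isolated in the non-klt locus by Theorem~\ref{lc-property}. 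Since $-(K_V+C+\Delta)$ is $\phi$-ample, this contradicts the connectedness lemma. Your fibrewise lc-threshold bounds from boundedness correspond only to Theorem~\ref{lc-property}, which is necessary but not sufficient.

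Second, you have misread $\mathcal{M}_2(t)$. By Definition~\ref{defi-for-M(t)} it is not a largeness condition with $t\gg0$. It directly supplies $0\le\Delta_1\sim_{\bQ}\delta_1(K_V+C)$ and $0\le\Delta\sim_{\bQ}\delta(K_V+C)$ with $\delta_1,\delta<t$, such that, after switching, $X_1$ is a pure and exceptional lc centre of $(V,C+\Delta_1)$ with lc place $E_1$. Likewise $X_2$ is a pure and exceptional lc centre of $(V,C+\Delta)$ with $a(E_1,V,C+\Delta)\le 0$. The condition strengthens as $t$ decreases, and the theorem needs $t$ \emph{small}, chosen from $d,\Phi,u,v,\epsilon$ so that Theorems~\ref{lc-property} and~\ref{Res-for-Negative-parts} apply. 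So the divisor you call the main obstacle is a hypothesis. Your construction $D=f^*D_T+(\text{small})$ instead rests on an assumption ($K_V+C-tf^*H$ pseudo-effective) that is not available.

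Moreover, a single $D$ whose multiplier ideal is exactly $\mathcal{I}_{X_1\sqcup X_2}$ near both fibres is more than tie-breaking provides. The paper uses the asymmetry instead. It lifts from $X_1$ by the one-fibre result, Theorem~\ref{extension-for-one-fiber-DCC}. It then shows every section on $E_2$ lifts to a section vanishing along $E_1$: $E_1$ has coefficient $s\ge 1$ in $\nu^*(K_V+C+\Delta)-K_W$, hence $-L\le -E_1+\roundup{F+P}$. Since $K_V+C$ is not big, this uses Koll\'ar's injectivity with the semi-ample $P_\sigma(\nu^*(K_V+C))$ rather than Nadel vanishing.
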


Compared with the proof of \cite[Theorem 3.1]{CL24}, we refine and simplify our proof of Theorem \ref{main-Extention-for-2-fibers-DCC} using the following result on the restriction of the negative part of the Zariski decomposition; see Section \ref{NZ-decomp} for the definition.

\begin{thm}\label{main-Res-for-Negative-parts}
    Let $d > 0$ be an integer, $\Phi \subseteq[0,1]$ be a DCC set, $ \epsilon, u,v > 0$ be real numbers. Then, there exists a real number $t>0$ that satisfies the following. Assume that 
    \begin{itemize}
        \item[(1)] $f:(V,C) \to T$ admits a klt $(d,\Phi,{\leq}u, {\leq}v)$-SMM-type fibration such that $(X,C|_X)$ is $\epsilon$-lc for a general fiber $X$; 
        \item[(2)] $f:(V,C) \to T$ satisfies the condition $\mathcal{M}_2(t)$.{\rm (see Definition \ref{defi-for-C(t)})}
    \end{itemize}
    Then, for a general fiber $X$, we have 
    $$N_\sigma(K_V + C)|_X = N_
    \sigma(K_X + C|_X).$$
\end{thm}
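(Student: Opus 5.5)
The plan is to prove the two inequalities between $N_\sigma(K_V+C)|_X$ and $N_\sigma(K_X+C|_X)$ separately; the second one carries the content. Write $D=K_V+C$ and $D_X=D|_X=K_X+C|_X$ for a general fiber $X$. I am guessing the shape of Definition \ref{defi-for-C(t)} and of Definition \ref{admits-klt-stable-minimal-model} from how they are used, so the argument is organised to rely only on the features described below.

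\emph{Step 1: the easy inequality.} For a very general fiber $X$ one has $N_\sigma(D_X)\le N_\sigma(D)|_X$. The reason is that $P_\sigma(D)|_X$ is pseudo-effective and is a limit of restrictions of divisors with no fixed components along $X$. Since $N_\sigma$ is determined by asymptotic orders of vanishing along prime divisors, and only countably many prime divisors of $V$ are involved, the conclusion extends from very general to general fibers. I would cite Nakayama's standard facts for this step.

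\emph{Step 2: reduction to uniform sections on $X$.} For the reverse inequality $N_\sigma(D)|_X\le N_\sigma(D_X)$, it suffices to produce, for every sufficiently divisible $m$ and every small ample $\delta A_V$, enough sections of $m(D+\delta A_V)$ on $V$. Their restrictions to $X$ should have divisorial base locus equal to $mN_\sigma(D_X)$ up to an error that tends to $0$.

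I would get the sections on the fiber from hypothesis (1). $(X,C|_X)$ is $\epsilon$-lc with coefficients in the DCC set $\Phi$, and it admits a klt $(d,\Phi,{\le}u,{\le}v)$-stable minimal model $((X',B'),A')$. By Birkar's boundedness of stable minimal models (\cite{bir23b}, as used for Theorem \ref{main-Effective-Iitaka}), the models $X'$ move in a bounded family. So there are a uniform integer $m_1$ and a uniform very ample divisor $H'$ on $X'$ with $m_1(K_{X'}+B')$ base point free. Pulling back along the weak lc model map $X\dashrightarrow X'$, the fixed part of $|m m_1 D_X|$ is then exactly $m m_1 N_\sigma(D_X)$, because the divisors contracted by $X\dashrightarrow X'$ are precisely the components of $N_\sigma(D_X)$. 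All constants depend only on $(d,\Phi,u,v,\epsilon)$.

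\emph{Step 3: extension from fibers to $V$ (main obstacle).} Next I would lift the sections of $m m_1 D_X$ to $V$ using an Ohsawa--Takegoshi/Hacon--McKernan type extension theorem. I understand condition $\mathcal{M}_2(t)$ as saying that $D$ dominates $t$ times the fibers, for instance $D\sim_{\mathbb R} t(X_1+X_2)+E$ with $E\ge 0$ and suitable positivity. Write
$$m m_1 D - X = K_V + C + \big((m m_1-1)D - X\big).$$
Using $\mathcal{M}_2(t)$, the last term can be split into a part $\tfrac{1}{t}$-close to a semipositive class plus an effective divisor whose multiplier ideal is trivial along a general $X$. The $\epsilon$-lc hypothesis and the DCC of $\Phi$ are what make this possible: they give a uniform bound on how much boundary we may add while keeping the multiplier ideal trivial along $X$, and $t$ is chosen larger than the reciprocal of that bound. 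The hardest point is to make the choice of $t$ independent of $V$. I would first use boundedness of the fibers from Step 2 to bound the log canonical thresholds of $|m_1 D_X|$ uniformly, and then use that bound to fix $t$.

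Given surjectivity onto $H^0(X,m m_1 D_X)$, possibly after the small ample perturbation, every component of $N_\sigma(D)|_X$ must lie in the fixed part of the restricted system, which is $m m_1 N_\sigma(D_X)$. Taking the limit as $m\to\infty$ and $\delta\to 0$ gives $N_\sigma(D)|_X\le N_\sigma(D_X)$, and combined with Step 1 this gives equality.
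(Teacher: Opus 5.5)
\textbf{There is a genuine gap, in Step 3, and it is circular.}

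Your Step 1 is correct in substance. It amounts to $N_\sigma(D)|_X\ge N_\sigma(D;V/T)|_X=N_\sigma(D_X)$, cf.\ Lemma \ref{Zariski-negative-in-family}. Note, though, that a countability argument by itself only gives very general fibres.

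The problem is Step 3. To extend sections of $mm_1D_X$ by Nadel/Koll\'ar-type vanishing, or by an Ohsawa--Takegoshi/Hacon--McKernan extension, you must write $(mm_1-1)D$ (minus the part producing the lc centre along $X$) as a semipositive class plus an effective divisor whose multiplier ideal is trivial at the generic point of $X$. Since $D=K_V+C$ is neither nef nor big, that effective part must contain roughly $(mm_1-1)N_\sigma(D)$, whose coefficients grow with $m$. A uniform lct bound only lets you add a boundary of size about $tD$. It therefore cannot make that multiplier ideal trivial along $X$ unless $N_\sigma(D)$ has no $T$-horizontal components, which is precisely the statement you are proving.

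In the paper the implication runs the other way. In Theorem \ref{extension-for-one-fiber-DCC}, Koll\'ar injectivity only gives surjectivity onto $H^0(E,\lfloor mg^*(K_X+C|_X)\rfloor-L|_E)$, where $L$ contains $(m-1-\delta)N_\sigma(\nu^*(K_V+C))$. The present theorem is exactly what kills $N_\sigma(\nu^*(K_V+C))|_E$. That extension argument also needs a good minimal model of $(V,C)$, which is not assumed here. Hacon--McKernan extension does not rescue you either: it only lifts sections modulo the restricted asymptotic fixed part, so comparing fixed parts remains the whole problem.

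You have also misread $\mathcal{M}_2(t)$. It is not a domination condition such as $D\ge t(X_1+X_2)$. It asks for $\Delta\sim_{\bQ}\delta(K_V+C)$ with $\delta<t$ small, making $X_1$ (resp.\ $X_2$) a pure exceptional lc centre. Moreover, $X$ is in general not a divisor, so $mm_1D-X$ only makes sense when $\dim T=1$. The second example in Section \ref{examples} shows that a multiplicity condition of your type does not suffice.

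\textbf{The missing idea is geometric control of the horizontal part of $N_\sigma$, not cohomological control.}

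The paper first runs the $(K_V+C)$-MMP over $T$ to reach a weak stable-minimal-model fibration $(V',C')\to T$. There $N_\sigma(K_{V'}+C';V'/T)$ is vertical by Lemma \ref{Zariski-negative-in-family}.

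It then shows (Theorem \ref{Vertical-for-ext-locus}) that every $(K_{V'}+C')$-negative extremal contraction has $T$-vertical exceptional locus. Suppose the exceptional locus had a horizontal component. Then a fibre $S$ of the contraction would meet two general fibres $X_1,X_2$, and:
\begin{itemize}
\item $\mathcal{M}_2(t)$ gives $\Delta\sim_{\bQ}\delta(K_{V'}+C')$ with $X_2$ a pure exceptional lc centre and $(V',C'+\Delta)$ non-klt along $X_1$;
\item Theorem \ref{lc-property} (bounded base of the stable minimal model, uniform generalized lct, inversion of adjunction) makes $X_2$ an isolated component of the non-klt locus;
\item Koll\'ar's connectedness lemma along $S$, where $-(K_{V'}+C'+\Delta)$ is relatively ample, then gives a contradiction.
\end{itemize}
Hence $N_\sigma(K_{V'}+C')$ is vertical. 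The negativity lemma then transports ``$N_\sigma-N_\sigma(\,\cdot\,;V/T)$ is vertical'' back to $V$, and restricting to a general fibre gives the equality.

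This is where the two-fibre nature of $\mathcal{M}_2$ and the smallness of $t$ are actually used. Nothing in your proposal plays that role.
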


The other key step is to construct such a special fibration. This idea was first proposed by McKernan in \cite{M02} and later refined by Lacini in \cite{Lac23}. Using methods similar to those in \cite{Lac23,Wang25}, we can construct a special type of fibration for pairs that admit a good minimal model and have sufficiently large Iitaka volumes.

    \begin{thm}{\label{main-construct-fibration}}
        Let $n \geq d > 1$ be integers. Let $\lambda: \bZ_{>0}  \times \bR_{>0} \to \bR_{>0}$ be a fixed function. Then, there exist $v_{d-1} \geq \cdots \geq v_1 > 0$ and $v > 0$ such that the following holds. Assume that
        \begin{itemize}
            \item[(1)] $(V,C)$ is a projective klt $\bQ$-pair that has a good minimal model;
            \item[(2)] $K_V + C$ has Iitaka dimension $d$ and $\ivol(K_V + C) > v$.
        \end{itemize}
        Then, either
        \begin{itemize}
            \item the linear system $|m'( K_V + C)|$ induces an Iitaka fibration, where $m':=\max\{m_0,2\}$, and $m_0>0$ is the minimal integer such that $m_0C$ is integral; or 
            \item after replacing $(V,C)$ with a crepant pair, there is an integer $k \in (0,d)$ and a fibration $f:(V,C) \to T$ such that for a general fiber $X$, $\dim X = n - d + k$, $f: (V,C) \to T$ satisfies condition $\mathcal{M}_2(\lambda(k,v_k))$ and $\ivol(K_X + C|_X) \leq v_k$.
        \end{itemize}
    \end{thm}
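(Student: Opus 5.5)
We reduce to the ample model of $K_V+C$ and then run a McKernan--Lacini type tower-of-fibrations argument, as in \cite{M02,Lac23,Wang25}. Since $(V,C)$ has a good minimal model, after replacing $(V,C)$ by a crepant pair (a log resolution of the graph of the map to the good minimal model), we may assume $K_V+C\sim_{\bQ} g^*L+E$. Here $g:V\to Z$ is the Iitaka fibration onto the ample model $Z$, $\dim Z=d$, $L$ is an ample $\bQ$-divisor with $\vol(L)=\ivol(K_V+C)>v$, and $E\geq 0$ is $g$-exceptional (in fact $E=N_\sigma(K_V+C)$). By the canonical bundle formula, $L\sim_{\bQ}K_Z+B_Z+M_Z$ with $(Z,B_Z+M_Z)$ a generalized klt pair. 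In particular, $|m'(K_V+C)|$ induces an Iitaka fibration exactly when $|m'L|$, pulled back, gives a birational map of $Z$. So the dichotomy we must prove is really a statement about the big ample divisor $L$ on the $d$-dimensional variety $Z$.

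The constants are fixed by descending recursion. Set $v_1$ to be any positive number. For $k=2,\dots,d-1$, choose $v_k\geq v_{k-1}$ large relative to $\lambda(j,v_j)$ for $j<k$. Finally choose $v\gg v_{d-1}$ so that $\vol(L)^{1/d}$ dominates all the thresholds $\lambda(k,v_k)^{1/(d-k)}$ together with the constants needed below. Since $\vol(L)>v$, for two general points $z_1,z_2\in Z$ we can build a $\bQ$-divisor $D\sim_{\bQ}cL$, with $c<1$ small, having isolated non-klt centres at $z_1,z_2$. We would then try to apply Nadel vanishing or Kawamata--Viehweg vanishing (via the generalized pair structure) with $m'(K_V+C)-(K_V+C+g^*D)$ nef and big to separate $z_1,z_2$. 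If this works for general points, the first alternative holds.

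If it fails, the minimal non-klt centres through general points are positive dimensional. These centres form a covering family of subvarieties $W_z\subset Z$, $0<\dim W_z=d-k<d$. Following McKernan's trick, we use them to define a rational fibration $Z\dashrightarrow T$ with $\dim T=k$ whose general fibres are $W_z$. We then take $f:V\to T$ after resolving and replacing $(V,C)$ crepantly, so that a general fibre $X$ of $f$ has $\dim X=n-(d-k)$... more precisely the preimage of $W_z$. We take the convention so that $\dim X=n-d+k$; if the indexing forces it, we replace $k$ by $d-k$. The low-volume alternative on the fibres is arranged by choosing the minimal $k$ for which the centres have volume $\vol(L|_{W})\leq v_k$. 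If instead the centres have large volume, we cut down inside $W$ using the same construction with $\vol(L|_W)^{1/\dim W}$. Because the induction on dimension is finite, there are at most $d-1$ steps, and each step descends to a strictly smaller dimension. Since $\ivol(K_X+C|_X)=\vol(L|_W)$, this gives $\ivol(K_X+C|_X)\leq v_k$.

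The main obstacle is verifying condition $\mathcal{M}_2(\lambda(k,v_k))$ on the resulting fibration. This is where the choice $v\gg v_{d-1}$ is used. The low volume of the fibres, against the huge total volume, forces the base $T$ to carry a big divisor of large volume. Concretely, the estimate $\vol(L)\leq \binom{d}{k}\,\vol(L|_W)\cdot\vol_T(\cdot)$ (with a Lacini-type subadditivity inequality) shows that the base direction has volume at least $v/(\binom{d}{k}v_k)\geq\lambda(k,v_k)$. We would first try to prove exactly this volume lower bound on $T$, using the subadditivity of volumes along fibrations as in \cite{Lac23,Wang25}, and then match it against Definition \ref{defi-for-C(t)} (presumably a lower bound on the volume, or on the separation of two general fibres, coming from the base).
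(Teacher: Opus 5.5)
\textbf{There is a genuine gap, at exactly the step you flag as the main obstacle.}

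Condition $\mathcal{M}_2(t)$ (Definition \ref{defi-for-M(t)}) is not a volume lower bound on a base. It asks the following: for two general fibres $X_1,X_2$, after switching, there is $0\le\Delta_1\sim_{\bQ}\delta_1(K_V+C)$ with $\delta_1<t$ making $X_1$ a pure and exceptional lc centre with unique lc place $E_1$. There is also $0\le\Delta\sim_{\bQ}\delta(K_V+C)$ with $\delta<t$ making $X_2$ a pure and exceptional lc centre with $a(E_1,V,C+\Delta)\le 0$.

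Since $\lambda$ is arbitrary, $t=\lambda(k,v_k)$ may be tiny, and the condition becomes \emph{stronger} as $t$ decreases. What you need is that the fibres are cut out as lc centres by divisors of very small slope, i.e.\ large multiplicity along $X$ per unit of $K_V+C$. Your thresholds point the wrong way. Both $v/\bigl(\binom{d}{k}v_k\bigr)\geq\lambda(k,v_k)$ and ``$\vol(L)^{1/d}$ dominates $\lambda(k,v_k)^{1/(d-k)}$'' must be phrased in terms of $1/\lambda(k,v_k)$. More seriously, a volume bound in ``the base direction'' (where $L$ does not even descend to $T$) produces no $\Delta_1,\Delta$, no purity or exceptionality, and nothing about $a(E_1,V,C+\Delta)\le 0$.

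The missing ingredient is the \emph{weight} of a birational family of intrinsic tigers. The paper cites \cite[Lemma 3.6]{Lac23} and the proof of \cite[Theorem 1.2]{Wang25}: when $D$ is not potentially birational, they give (on a higher model of $Y$) a fibration $g:Y\to T$ with $\vol(D|_F)\le v_k$ that is a birational family of intrinsic tigers of weight $3/t_k$ with $t_k\le\lambda(k,v_k)$. This is where the recursive choice of the $v_j$ and $v$ is actually used. Lemma \ref{property-for-tigers} then converts weight $w$ into $\mathcal{M}_2(3/w)$. That lemma combines \cite[Lemma 3.4]{Lac23}, a tie-breaking between $X_1$ and $X_2$, and the perturbation Lemma \ref{perturb}. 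Monotonicity in $t$ then gives $\mathcal{M}_2(\lambda(k,v_k))$.

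The intrinsic definition through $B^k_x(lD)$ is also what makes your ``minimal non-klt centres through general points'' well defined and independent of choices, so that they form a genuine fibration. It also gives base-point-freeness of $S^k_x(lD)$ in a punctured neighbourhood of the generic point of a fibre, which the tie-breaking needs. Minimal centres of arbitrarily chosen divisors have none of these properties.

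The first alternative is also not correct as written. On $V$ you have
\[
m'(K_V+C)-(K_V+C+g^*D)\sim_{\bQ} g^*((m'-1-c)L)+(m'-1)E,
\]
which has Iitaka dimension $d$ and so is not big when $d<n$. Kawamata--Viehweg and Nadel vanishing on $V$ are therefore unavailable. Passing to the generalized pair on $Z$ does not help directly either. To identify $H^0(V,m'(K_V+C))$ with sections on $Z$ to which generalized-pair vanishing applies, you need $m'(K_V+C)\sim h^*(m'(K_Z+B_Z+M_Z))$. That linear equivalence is only available for $m'$ divisible by the index $q$ of the canonical bundle formula (compare the proof of Theorem \ref{main-Effective-Iitaka}), whereas here $m'=\max\{m_0,2\}$ is fixed.

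The correct dichotomy is whether $D$ is potentially birational (Definition \ref{defi-for-p-birat}), not whether some separation attempt succeeds. In the potentially birational case the paper applies Theorem \ref{app-for-kol-inj} with $L=m'(K_V+C)$ and $L-(K_V+C)\sim_{\bQ}h^*((m'-1)D)$. Here $(m'-1)D$ is still potentially birational because $m'\ge 2$, and vanishing is replaced by Koll\'ar's injectivity on a resolution of $V$.

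Finally, fix your indexing: $k$ is the dimension of the fibre $F\subset Y$, so $\dim X=n-d+k$ and $\dim T=d-k$. The constants $v_k$ and $\lambda(k,v_k)$ are indexed by this $k$ throughout the recursion.
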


    When we consider smooth projective varieties without boundaries, the situation becomes simpler. Parallel to Theorems \ref{main-Effective-Iitaka} and \ref{main-for-Iitaka-large-volume}, we have the following results. We begin with an effective Iitaka fibration result.

    \begin{thm}\label{main-Effective-Iitaka-no-boundary}
        Let $n>0$ be an integer and $u>0$ be a real number. Then, there is an integer $s_0$ with the following property. For any smooth projective variety $V$ such that $(V,0)$ admits a klt $(n,\{0\}, {\leq}u)$-stable minimal model, the linear system $|mK_V|$ induces an Iitaka fibration for any $m \geq s_0$.
    \end{thm}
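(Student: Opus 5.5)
The plan is to reduce to Theorem \ref{main-Effective-Iitaka} with $\Phi=\{0\}$ and then pass from ``$m$ divisible by $m_0$'' to ``all $m\geq s_0$'' by a nonvanishing-plus-addition argument on the minimal model. Let $((X,0),A)$ be the klt $(n,\{0\},{\leq}u)$-stable minimal model which is a weak lc model of $(V,0)$, and let $h\colon X\to Z$ be the contraction defined by the semi-ample divisor $K_X$. Since $V$ is smooth, $H^0(V,mK_V)\cong H^0(X,mK_X)$ for every $m\ge 0$, and the rational maps agree up to the birational map $V\dashrightarrow X$. So it suffices to work on $X$ and show that $|mK_X|$ defines $h$ up to birational equivalence for all $m\ge s_0$.

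\emph{Step 1: two facts about $X$.} By Theorem \ref{main-Effective-Iitaka} applied with $\Phi=\{0\}$, there is an $m_0$, depending only on $(n,u)$, such that $|m_0K_X|$ induces $h$. Next, the general fibre $F$ of $h$ is a klt variety with $K_F\equiv 0$ carrying an ample divisor $A|_F$ with $\vol(A|_F)\le u$. Such fibres form a bounded family (Birkar). Hence there is an integer $I=I(n,u)$ with $IK_F\sim 0$, and after enlarging $m_0$ we may assume $I\mid m_0$.

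\emph{Step 2: reduction to uniform nonvanishing.} Suppose that $H^0(X,jK_X)\neq 0$ for every $j\ge s_1$, with $s_1$ uniform in $(n,u)$. Take $m\ge s_0:=s_1+m_0$ and fix $0\le D\in|(m-m_0)K_X|$. Then
$$|m_0K_X|+D\subseteq|mK_X|,$$
so the map $\phi_{|mK_X|}$ dominates $\phi_{|m_0K_X|}$, which is birational to $h$. Conversely, $h^0(F,mK_F)\le 1$ because $K_F$ is torsion. Therefore every section of $mK_X$ is constant along general fibres, and $\phi_{|mK_X|}$ factors rationally through $Z$. Together these show that $|mK_X|$ induces the Iitaka fibration.

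\emph{Step 3: the nonvanishing.} This is the main obstacle. The divisor $jK_X$ restricts to $jK_F$, which is effective only when the torsion order of $K_F$ divides $j$. For instance, $V=S\times C$ with $S$ an Enriques surface gives $H^0(V,mK_V)=0$ for all odd $m$. So Step 2 can only hold under the paper's convention for ``induces an Iitaka fibration'', or for $m$ restricted to the appropriate residue classes modulo $I$. The statement must be read in whichever of these senses the paper intends.

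For $j$ with $I\mid j$, I would write $jK_X\sim_{\bQ}h^*(j(K_Z+B_Z+M_Z))$ via the canonical bundle formula. Boundedness of the fibres implies that $B_Z$ has coefficients in a fixed DCC (in fact finite) set and that $M_Z$ has a bounded Cartier index on a high model. Nonvanishing for large multiples of a big generalized lc divisor with these data, obtained in the style of \cite{BZ16}, would then give $H^0(jK_X)\ne0$ for all such $j\ge s_1$. The nonvanishing for generalized pairs needs integrality of $j(K_Z+B_Z+M_Z)$ only on the relevant residue classes, and making this uniform is where most of the work lies.
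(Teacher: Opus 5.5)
Your Step 3 diagnosis is right, and it is not a matter of convention: the statement as written is false. Take an Enriques surface $S$ with $A$ a fixed ample divisor $H$. Alternatively, take $V=S\times B$ with $g(B)\ge 2$ and $A=p_1^*H$, which gives positive Iitaka dimension and arbitrarily large Iitaka volume. Either example satisfies the hypotheses for $u\ge H^2$, yet $H^0(V,mK_V)=0$ for every odd $m$, so no $s_0$ exists.

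The paper's proof misses this. Writing $K_X\sim_{\bQ}h^*D$ on the minimal model, the argument runs as follows:
\begin{enumerate}
\item it takes $m_0$ with $|\rounddown{m_0D}|$ birational on the base;
\item it uses \cite[Lemma 2.3.4(2)]{HMX13} to make $(m-1)D$ potentially birational for $m-1\ge(2n+1)m_0$;
\item it applies Theorem \ref{app-for-kol-inj} with $L=mK_X$.
\end{enumerate}
In the proof of Theorem \ref{app-for-kol-inj}, the claim that $(M+G)|_E$ is an effective divisor tacitly requires $L|_{F_x}$ to be linearly equivalent to an effective divisor. Since $L|_{F_x}=mK_{F_x}$ is $\bQ$-linearly trivial, this means exactly $mK_{F_x}\sim 0$. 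Your $S\times B$ example with $m\ge 3$ odd satisfies every hypothesis of Theorem \ref{app-for-kol-inj}, yet $|L|=\emptyset$. If the hypothesis $mK_F\sim 0$ on a general fibre is added, the paper's argument proves exactly your corrected version: the conclusion holds for all $m\ge (2n+1)m_0+1$ with $mK_F\sim 0$, for instance all such $m$ with $I\mid m$.

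Your Steps 1 and 2 are sound. For the corrected statement, however, the proposal still has a genuine gap, namely Step 3 itself. The uniform nonvanishing of $H^0(X,jK_X)$ for all large $j$ with $I\mid j$ is only asserted ``in the style of \cite{BZ16}''. As used in Theorem \ref{main-Effective-Iitaka}, \cite{BZ16} controls multiples of one fixed $m_0$, not every large $j$. Descending to $(Z,B_Z+\mathbf{M}_Z)$ runs into the fractional-coefficient and Cartier-index problems you yourself point out.

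The paper's mechanism fills this hole and makes your Step 2 unnecessary. We have $mK_X-K_X\sim_{\bQ}h^*((m-1)D)$ with $(m-1)D$ potentially birational. Koll\'ar's injectivity (Theorem \ref{kollars-injectivity}) then lifts a nonzero section from the lc place over a general fibre $F_x$; this section is nonzero exactly because $mK_{F_x}\sim 0$. The lift is a section of $mK_X$ that is nonzero along $F_x$ and vanishes on another general fibre $F_y$. This gives nonvanishing and separation of general fibres in one step. Your remark $h^0(F,mK_F)\le 1$ then supplies constancy along fibres.
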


    For any integer $n>0$ and real number $u > 0$, let $s(n,u)$ denote the minimal integer $s_0>0$ for which the above theorem holds. Note that $s(n+1,u) \geq s(n,u)$.\par
    For varieties without boundaries and with large Iitaka volumes, we have the following result, which generalizes \cite[Theorem 1.1]{CL24}.
    
    \begin{thm}\label{main-no-boundary}
        Let $n>0$ be an integer and $u > 0$ be a real number. Then, there exists a real number $v > 0$ that satisfies the following. Assume that 
        \begin{itemize}
            \item $V$ is a smooth projective variety; 
            \item $(V,0)$ admits a klt $(n,\{0\},{\leq}u, {\geq}v)$-stable minimal model.
        \end{itemize}
        Then, for any integer $m>0$ such that either $m$ is divisible by $r(k,\{0\},u)$ for any $1 \leq k \leq n-1$, or $m \geq s(n-1,u)$, the linear system $|mK_V|$ induces an Iitaka fibration. 
    \end{thm}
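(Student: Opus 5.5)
We follow the same strategy as for Theorem \ref{main-for-Iitaka-large-volume}, specialised to $C=0$, and add a second branch for the alternative hypothesis $m\geq s(n-1,u)$. Since $V$ is smooth, $(V,0)$ is terminal, hence $\epsilon$-lc for every $\epsilon\in(0,1]$. Also $mC=0$ is integral for every $m$. Hence, if $m$ is divisible by $r(k,\{0\},u)$ for all $1\le k\le n-1$, the conclusion follows directly from Theorem \ref{main-for-Iitaka-large-volume} applied with $\Phi=\{0\}$, $\epsilon=1$, after enlarging $v$ if necessary. The remaining work is the case $m\geq s(n-1,u)$, and we run the proof scheme of Theorem \ref{main-for-Iitaka-large-volume} in that setting.

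Let $d=\kappa(K_V)$. Since $(V,0)$ admits a stable minimal model, it has a good minimal model. We would first fix $\lambda$ so that, for each $k<d$, the real number $\lambda(k,v_k)$ is at least the constant $t$ produced by Theorem \ref{main-Extention-for-2-fibers-DCC}. This constant is taken with data $(n-d+k,\{0\},u,v_k,\epsilon)$, where $\epsilon$ is the uniform lc bound coming from terminality. Because $\lambda(k,\cdot)$ may depend on $v_k$, this choice is legitimate. Next we apply Theorem \ref{main-construct-fibration} with this $\lambda$; this produces $v_1,\dots,v_{d-1}$ and a threshold $v$, and we take the maximum over all $d\le n$. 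For $d=1$, or in the first alternative of Theorem \ref{main-construct-fibration}, $|2K_V|$ already induces the Iitaka fibration. Since $s(n-1,u)\ge 2$ and the Iitaka fibration is a property that stabilises once it holds for some $m'$, this case is fine after a standard argument that nonvanishing persists for larger $m$.

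In the second alternative, after a crepant replacement we obtain $f:V\to T$ satisfying $\mathcal{M}_2(\lambda(k,v_k))$. Its general fiber $X$ has dimension $n-d+k\le n-1$ and satisfies $\ivol(K_X)\le v_k$. We then need to know that $f$ is an SMM-type fibration, namely that $X$ admits a klt $(\dim X,\{0\},{\leq}u,{\leq}v_k)$-stable minimal model. The reason is that the stable minimal model of $V$ restricts to a general fiber, the relevant fibers of the Iitaka map of $V$ lying inside those of $X$; this is the point where the definitions in Section \ref{Special-fibrations} must be checked. Theorem \ref{main-Extention-for-2-fibers-DCC} then gives surjectivity of
\[
H^0(V,mK_V)\to H^0(X_1,mK_{X_1})\oplus H^0(X_2,mK_{X_2})
\]
for all $m\ge 2$. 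Since $X$ is smooth of dimension at most $n-1$ and admits a stable minimal model with volume bound $u$, Theorem \ref{main-Effective-Iitaka-no-boundary} together with the monotonicity $s(n-1,u)\ge s(\dim X,u)$ shows that $|mK_X|$ induces the Iitaka fibration of $X$ for $m\ge s(n-1,u)$. (In the divisibility branch one uses $r(\dim X,\{0\},u)$ instead.) Surjectivity onto a single fiber shows that $\phi_{|mK_V|}$ restricted to $X$ is the Iitaka map of $X$. Surjectivity onto the direct sum for two general fibers shows that $\phi_{|mK_V|}$ separates general fibers of $f$. Together these show that $\phi_{|mK_V|}$ has the same general fibers as the Iitaka fibration of $V$, which is the claim.

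The main obstacle is the compatibility step: verifying that the fibration obtained from Theorem \ref{main-construct-fibration} after the crepant replacement is a klt SMM-type fibration, with the fiberwise volume bound still at most $u$ and with the smoothness or terminality of the general fiber preserved. Theorem \ref{main-Effective-Iitaka-no-boundary} needs a smooth $X$, so we may have to pass to a resolution of the fiber and use birational invariance of $H^0(mK)$. A secondary difficulty is the uniformity of $\epsilon$ when the crepant model is only klt. In that case we would instead invoke Theorem \ref{main-for-Iitaka-large-volume}'s argument with $\epsilon$ fixed by the $\epsilon$-lc property of $V$, since crepant pullback preserves discrepancies.
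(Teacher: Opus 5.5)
Your overall route is the paper's. The divisibility branch is Theorem~\ref{main-for-Iitaka-large-volume} with $\Phi=\{0\}$ and $\epsilon=1$. The branch $m\ge s(n-1,u)$ reruns that proof, using Theorem~\ref{main-Effective-Iitaka-no-boundary} on the smooth general fiber in place of divisibility by $r(k,\{0\},u)$. Your tracking of the fiber dimension $n-d+k$, with a maximum over $d$, is more careful than the paper's $\lambda(d,v)=t(d,\Phi,u,v,\epsilon)$. The SMM-type compatibility you flag is merely asserted in the paper's proof of Theorem~\ref{main-for-Iitaka-large-volume}, so you are not behind the paper there. Two steps, however, fail as written.

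First, your choice of $\lambda$ goes in the wrong direction. Condition $\mathcal{M}_2(t)$ only requires $\delta\in(0,t)$, so it gets \emph{weaker} as $t$ increases. The conclusion of Theorem~\ref{main-Extention-for-2-fibers-DCC} therefore survives shrinking its constant $t$, not enlarging it. If $\lambda(k,v_k)\ge t$, the fibration produced by Theorem~\ref{main-construct-fibration} satisfies only $\mathcal{M}_2(\lambda(k,v_k))$. That does not imply $\mathcal{M}_2(t)$, so the extension theorem cannot be invoked. You need $\lambda(k,v_k)\le t(n-d+k,\{0\},u,v_k,1)$, e.g.\ equality, as the paper does.

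Second, your handling of the first alternative (and of $d=1$) for $m>2$ rests on a false principle. Inducing the Iitaka fibration does not ``stabilise'' once it holds for $m'=2$. From $|2K_V|$ you only get $|mK_V|$ when $|(m-2)K_V|\neq\emptyset$. Already $m=3$ would require $p_g(V)>0$, which nothing in the hypotheses provides, and the appeal to ``nonvanishing persisting'' does not supply it.

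The correct argument applies Theorem~\ref{app-for-kol-inj} separately for each $m$. On the minimal model $K_V\sim_{\bQ}h^*D$ with $D$ ample and potentially birational. Then $(m-1)D=D+(m-2)D$ is potentially birational for every $m\ge2$: add $(1-\epsilon)$ times a fixed member of $|(m-2)D|_{\bQ}$, which misses the two general points. Theorem~\ref{app-for-kol-inj} with $L=mK_V$ then shows that $|mK_V|$ separates general fibers of $h$, hence induces the Iitaka fibration. For $d=1$, where Theorem~\ref{main-construct-fibration} is not available, you must also say why $D$ is potentially birational. It suffices that $\deg D=\ivol(K_V)\ge v>2$ on the curve $Y$.

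Finally, your claim $s(n-1,u)\ge 2$ is unproved. Both your argument and the paper's tacitly need $m\ge2$ and $n\ge2$; Theorem~\ref{main-for-Iitaka-large-volume} assumes $n>1$. Indeed, for $n=1$ the divisibility condition is vacuous, so the statement would assert that $|K_V|$ is birational for every curve of large genus. That fails for hyperelliptic curves when $u\ge1$.
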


{\bf Remarks.} Theorem~\ref{main-for-Iitaka-large-volume} requires $m$ to be divisible by $r(k,\Phi,u)$ for every $1\le k\le n-1$ for the following reason. Although examples show $r(k+1,\Phi,u)\ge r(k,\Phi,u)$ for all $k\ge 1$, we do not know whether $r(k+1,\Phi,u)$ is a multiple of $r(k,\Phi,u)$. Consequently, our statement differs slightly from \cite[Theorem~1.1]{CL24}. \par

The proof of Theorem~\ref{main-Extention-for-2-fibers-DCC} also refines the argument of \cite[Theorem 1.5]{CL24}.
Instead of using the Hacon--McKernan extension theorem \cite[Corollary 3.17]{HM06} to compare the fixed parts of the linear system before and after restriction, we control the Negative part of Zariski decomposition directly via Theorem~\ref{main-Res-for-Negative-parts}. Moreover, the pair $(V,C)$ is no longer of log general type, so Kawamata--Viehweg vanishing is unavailable; we replace it by Koll\'ar's injectivity theorem, which in turn forces us to assume the existence of good minimal models.

Theorem~\ref{main-Res-for-Negative-parts} analyses the restriction of the Zariski negative part.
The main tools are minimal-model-program techniques, boundedness of the base of a stable minimal model, and a uniform lower bound on log-canonical thresholds; the approach is inspired by \cite{BH24,Bir19,Bir21,Bir21b,Zhu25}.
\bigskip

{\bf Organization of the paper.} Section \ref{Preliminaries} gives some preliminary results. Section \ref{Stable-mm-in-famlies} gives definitions of some special fibrations and explores their properties. We also prove Theorem \ref{main-construct-fibration}. In Section \ref{Res-for-N-parts}, we prove Theorem \ref{main-Res-for-Negative-parts}. In Section \ref{Ext-thm-for-special-fibrations}, we prove Theorem \ref{main-Extention-for-2-fibers-DCC}. In Section \ref{Appications}, we prove Theorem \ref{main-Effective-Iitaka}, Theorem \ref{main-for-Iitaka-large-volume}, Theorem \ref{main-Effective-Iitaka-no-boundary} and Theorem \ref{main-no-boundary}. In Section \ref{examples}, we give some examples. 

\bigskip

{\bf Acknowledgments.}
The author expresses gratitude to Caucher Birkar and Meng Chen for great encourgement. He is grateful to Minzhe Zhu for reading a draft version of this paper and providing many valuable suggestions. He also thanks Chen Jiang, Ziqi Liu and Pengjin Wang for useful comments. 

\section{Preliminaries}\label{Preliminaries}

\subsection{DCC sets.}

\begin{defi}\em
    Let $\Phi \subseteq \bR$ be a set. We say that $\Phi$ satisfies {\em Descending chain condition} or $\Phi$ is a {\em DCC} set if $\Phi$ does not contain a strictly decreasing sequence.
\end{defi}

\subsection{Divisors and contractions.}
\begin{defi}\em

    Let $D$ be an $\bR$-divisor on a variety $X$ and $P$ be a prime divisor. Denote by $\mu_P{D}$ the coefficient of $P$ in $D$. Write  $D = \sum d_i D_i$ uniquely, where $D_i$ are different prime divisors and $d_i \in \bR$. Define the {\em roundup}, {\em rounddown} and {\em fractional part} of $D$ to be $$\roundup{D}:=\sum{\roundup{d_i}D_i}, \quad  \rounddown{D}:= \sum{\rounddown{d_i}D_i}, \quad \fraction{D} := \sum{\fraction{d_i}D_i},$$ respectively, where $\fraction{d_i} = d_i - \rounddown{d_i}$. We say that $D$ is {\em effective} and write $D \geq 0$ if $d_i \geq 0$ for all $i$. 

    Let $f: X \to Y$ be a morphism and $D$ be an $\bR$-Cartier divisor on $Y$. Sometimes we may write $D|_X$ instead of $f^*D$ for the pull-back of $D$ to $X$ if there is no confusion. For two divisors $D_1$ and $D_2$ and an integer $q>0$, we write $D_1 \sim_{q}D_2$ for $qD_1 \sim qD_2$. \par
\end{defi}

    \begin{defi}\em
        We say that a projective morphism $f:X \to Z$ between normal varieties is a {\em contraction} if $f_*\mathcal{O}_X = \mathcal{O_Z}$ holds. In particular, a contraction is always surjective with connected fibers. 
    \end{defi}

    \begin{defi}\em 
        Let $f:X \to Y$ be a contraction between normal projective varieties and $Z$ be a subvariety of $X$. We say that $Z$ is {\em horizontal} over $Y$ or {\em $f$-horizontal} if $Z$ dominates $Y$, otherwise we say $Z$ is {\em vertical} over $Y$ or {\em $f$-vertical}. 
    \end{defi}

\subsection{Linear systems.}
    \begin{defi}\em
        Let $D$ be an effective integral divisor on a normal projective variety $X$. The {\em linear system} $|D|$, the {\em $\bQ$-linear system} $|D|_{\bQ}$, the {\em $\bR$-linear system} $|D|_{\bR}$ and the {\em numerical linear system $|D|_{\rm num}$} of $D$ are respectively defined as 
        \begin{align*}
            |D|  &{:=} \{D' \geq 0 \mid D' \sim D\}, \\
            |D|_{\bQ} &{:=} \{D' \geq 0 \mid  D' \sim_{\bQ} D\}, \\
            |D|_{\bR} &{:=} \{D' \geq 0 \mid D' \sim_{\bR} D\}, \\
            |D|_{\rm num}  &{:=} \{D' \geq 0 \mid D' \equiv D\}.
        \end{align*}
    \end{defi}

\subsection{Pairs and Singularities.}
    \begin{defi}\em
        A {\em sub-pair} $(X,\Delta)$ consists of a normal quasi-projective variety $X$ and a $\bR$-divisor $\Delta$, such that $K_X + \Delta$ is $\bR$-Cartier. If additionally $\Delta \geq 0$, we say that $(X,\Delta)$ is a {\em pair}. If additionally $\Delta$ is a $\bQ$-divisor, we say that $(X,\Delta)$ is a $\bQ$-pair.\par
        
        If a pair $(X,\Delta)$ satisfies that $X$ is a smooth variety and that components of $\Delta$ has simple normal crossing support, then we say that $(X,\Delta)$ is {\em log smooth}.\par
        
    \end{defi}

    \begin{defi}\label{Pair-and-sing}\em
        Let $(X,\Delta)$ be a sub-pair. Let $D$ be a prime divisor over $X$ (i.e. $D$ is a prime divisor on a birational model of $X$). Let $f : Y \to X$ be a log resolution of $(X,\Delta)$ such that $D$ is a divisor on $Y$ and write uniquely
        $$K_Y + \Gamma = f^*(K_X + \Delta).$$
        We define the {\em log discrepancy} of the prime divisor $D$ with respect to the sub-pair $(X,\Delta)$ to be 
        $$a(D,X,\Delta) := 1 - \mu_D(\Gamma).$$
        Fix a number $\epsilon > 0$. We say $(X,\Delta)$ is {\em sub-lc} (resp. {\em sub-klt}, {\em sub-$\epsilon$-lc}) if $a(D,X,\Delta) \geq 0$ (resp. $>0$, $\geq \epsilon$) for any prime divisor $D$ over $X$. If, in addition, $(X,\Delta)$ is a pair, we say $(X,\Delta)$ is {\em lc}  (resp. {\em klt}, {\em $\epsilon$-lc}). We say $(X, \Delta)$ is {\em terminal} if $a(D,X,\Delta) > 1$ for any prime divisor $D$ that is exceptional over $X$. \par

        Let $(X,\Delta)$ be a sub-klt sub-pair, $D$ be an effective $\bR$-Cartier $\bR$-divisor on $X$ and $x \in X$ be a point (not necessarily closed). We define
        $$\lct_{x}(X,\Delta;D):= \sup \{a\in\bR \mid (X,\Delta + aD) \text{ is lc near $x$}\},$$
        and define
        $$\lct(X,\Delta;D):= \sup \{a\in\bR \mid (X,\Delta + aD) \text{ is lc everywhere}\}.$$
        Let $(X,\Delta)$ be a sub-pair and take a log resolution $f:Y \to X$. A {\em non-klt place} is a prime divisor $D$ over $X$ with $a(D,X,\Delta) \leq 0$ and a {\em non-klt center} is the image of a non-klt place on $X$. The {\em non-klt locus} of $(X,\Delta)$ is the union of all non-klt centers of $(X,\Delta)$ which is denoted as ${\rm Nklt}(X,\Delta)$.  An irreducible component of ${\rm Nklt}(X,\Delta)$
        is called a {\em maximal non-klt center}.  An {\em lc place} is a prime divisor $D$ over $X$ with $a(D,X,\Delta) = 0$ and an {\em lc center} is the image of an lc place on $X$. We say that an lc center $G$ of $(X, \Delta)$ is a {\em pure lc center} if $(X,\Delta)$ is lc at the generic point of $G$. We say that an lc center $G$ is an {\em exceptional lc center} if there is a unique lc place over $X$ whose image is $G$, and this unique lc place is called {\em the exceptional lc place}. \par
        Assume that $(X,\Delta)$ and $(X',\Delta')$ are two sub-pairs and there is a birational map $X \dashrightarrow X'$. We say that the two sub-pairs  are {\em crepant} if the pull-backs of $K_X+\Delta$ and $K_{X'}+\Delta'$ to a common birational model coincide. \par
        Assume that $(X,\Delta)$  is a pair and $\nu: X' \to X$ is a log resolution of $(X,\Delta)$. Let $\Delta' \geq 0$ be the minimal divisor such that 
        $$K_{X'} + \Delta' \geq \nu^*(K_X + \Delta).$$
        We call $(X',\Delta')$ a {\em crepant pair} of $(X,\Delta)$.
    \end{defi}

    We recall a well-known result on the resolution of sub-klt pairs.
    
    \begin{prop}\label{existence-of-very-log-resolution}{\rm (\cite[Proposition 1.10.7]{Kaw24})}
        Let $(X, \Delta)$ be a sub-klt sub-pair. Then there exists a birational morphism $\nu : Y \to X$ such that, if we uniquely write $$K_Y + \Gamma  - F = \nu^*(K_X + \Delta),$$
        where $\Gamma$ and $F$ are effective with no common components, and denote by $E$ the sum of all reduced exceptional divisors, then the following holds:
        \begin{itemize}
            \item[(1)] $X$ is smooth, $\Gamma + E$ has simple normal crossing support; 
            \item[(2)] $(Y,\Gamma)$ is terminal, and the components of $\Gamma$ pairwisely do not intersect;

            \item[(3)] if $K_X + \Delta$ is pseudo-effective, then  $\Gamma + E + N_\sigma(\nu^*(K_X + \Delta))$ also has simple normal crossing support.  
        \end{itemize}
    \end{prop}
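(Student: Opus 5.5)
We would build $\nu$ in two stages: first an ordinary log resolution that makes every relevant divisor simple normal crossing, then a finite sequence of blow-ups of codimension-two strata that separates the components of $\Gamma$. Terminality of $(Y,\Gamma)$ should then follow formally from the disjointness of those components. (We read (1) as asserting that $Y$ is smooth.)

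\emph{Step 1: initial resolution.} Take a log resolution $\nu_0\colon Y_0\to X$ of $(X,\Delta)$ whose exceptional locus is a divisor. If $K_X+\Delta$ is pseudo-effective, $N_\sigma(\nu_0^*(K_X+\Delta))$ is an $\bR$-divisor with finitely many components, by Nakayama's theory. Taking a further log resolution, we may assume that the following divisor has simple normal crossing support:
$$\operatorname{Supp}(\nu_0^{-1}_*\Delta)\cup \operatorname{Exc}(\nu_0)\cup \operatorname{Supp} N_\sigma(\nu_0^*(K_X+\Delta)).$$
Write $K_{Y_0}+\Gamma_0-F_0=\nu_0^*(K_X+\Delta)$. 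Since $(X,\Delta)$ is sub-klt, every coefficient of $\Gamma_0$ is $<1$.

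\emph{Step 2: separating the components of $\Gamma$.} Whenever two components $\Gamma_i,\Gamma_j$ of the current $\Gamma$ meet, with coefficients $a,b\in(0,1)$, blow up a component of $\Gamma_i\cap\Gamma_j$. This center is a stratum of the total snc divisor, so the total boundary, now including the new exceptional divisor, stays snc. The new exceptional divisor has coefficient $a+b-1<\min(a,b)$ in the crepant sub-boundary. If $a+b-1\le0$ it goes into $F$ (or has coefficient $0$); otherwise it enters $\Gamma$ with strictly smaller coefficient.

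Termination is the standard argument of Koll\'ar--Mori type. Choose a rational $\delta>0$ such that all coefficients of $\Gamma_0$ lie in $[\delta,1-\delta]$. Each newly created coefficient is at most the smaller parent coefficient minus $\delta$. Hence a potentially infinite chain of positive coefficients descends by steps of at least $\delta$ and must stop. A weighted count of intersecting pairs, sorted by coefficient, strictly decreases. This termination is the step we expect to need the most care.

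We also need the $N_\sigma$ condition to survive these blow-ups. For a blow-up $\mu$ along a stratum,
$$\operatorname{Supp} N_\sigma(\mu^*L)\subseteq \mu^{-1}\operatorname{Supp} N_\sigma(L)\cup\operatorname{Exc}(\mu),$$
because $\mu^*N_\sigma(L)-N_\sigma(\mu^*L)$ is $\mu$-exceptional and $N_\sigma(\mu^*L)\le\mu^*N_\sigma(L)+(\text{exceptional})$. Since each center is a stratum of a divisor containing $\operatorname{Supp}N_\sigma$, the union $\Gamma+E+N_\sigma(\nu^*(K_X+\Delta))$ remains snc. This proves (1) and (3), and gives the disjointness part of (2).

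\emph{Step 3: terminality.} Let $(Y,\Gamma)$ be log smooth with pairwise disjoint components of coefficients $<1$. Any exceptional divisor over $Y$ is obtained by a sequence of blow-ups. A first blow-up with smooth center of codimension $c\ge2$ meets at most one component of $\Gamma$, so its log discrepancy is at least $c-a\ge 2-a>1$. By induction, using the fact that exceptional divisors of subsequent blow-ups have even larger discrepancies (the standard computation for snc pairs), every exceptional divisor $D$ satisfies $a(D,Y,\Gamma)>1$. Therefore $(Y,\Gamma)$ is terminal.
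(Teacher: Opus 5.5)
The paper gives no argument of its own here; it simply quotes \cite[Proposition 1.10.7]{Kaw24}. Your proof is the standard one behind such statements: a log resolution, then the Koll\'ar--Mori procedure of blowing up codimension-two strata where two components of $\Gamma$ meet, with terminality coming from the discrepancy computation for snc pairs. It is correct. Your reading of (1) as ``$Y$ is smooth'' is the right one. Your handling of (3) is also right and is genuinely needed: since $\mu_*N_\sigma(\mu^*L)=N_\sigma(L)$ for a birational morphism $\mu$ of smooth projective varieties, only $\mu$-exceptional components can be added. Such components can actually occur, so one cannot just use $N_\sigma(\mu^*L)=\mu^*N_\sigma(L)$.

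Two steps are only sketched, and both are easy to make precise.

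\emph{Termination.} Put $w(D)=1-\operatorname{coeff}_D$. The new divisor satisfies $w(G)=w(\Gamma_i)+w(\Gamma_j)$, so every weight that occurs is a sum of the original ones, each $\ge\delta$. Hence the pair weights $w(\Gamma_i)+w(\Gamma_j)<2$ lie in a finite set $w_1<\dots<w_M$. Let $n_k$ be the number of connected components of intersections $\Gamma_i\cap\Gamma_j$ of pair weight $w_k$. Blowing up one such component, of weight $w$, has three effects:
\begin{itemize}
\item it deletes that component;
\item it leaves the connected components of all other intersections unchanged, since their strict transforms are isomorphic to them or are blow-ups along codimension-two loci;
\item it creates only new pairs $(G,D)$, of weight $w+w(D)>w$.
\end{itemize}
So $(n_1,\dots,n_M)$ drops lexicographically, whichever pair you choose to blow up.

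\emph{Terminality.} ``Meets at most one component'' should read ``is contained in at most one component''. You can also replace the induction over blow-ups by a direct argument. Near the generic point of the centre of an exceptional $D$, disjointness gives either $\Gamma=0$ or $\Gamma=a\Gamma_1$ with $\Gamma_1$ smooth and $a\in(0,1)$. By linearity of log discrepancies in the boundary,
\[
a(D,Y,a\Gamma_1)=(1-a)\,a(D,Y,0)+a\,a(D,Y,\Gamma_1)\ge 2(1-a)+a=2-a>1 .
\]
This uses $a(D,Y,0)\ge 2$ because $Y$ is smooth, and $a(D,Y,\Gamma_1)\ge 1$ because $(Y,\Gamma_1)$ is canonical.
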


    We need the definition for potentially birational divisors. We refer to \cite[Definition 2.3.3]{HMX13}.
    
    \begin{defi}\label{defi-for-p-birat}\em
        Let $X$ be a normal projective variety. Let $D$ be a $\bQ$-Cartier $\bQ$-divisor. We say $D$ is {\em potentially birational} if for any two different general points $x,y \in X$, after possibly switching $x$ and $y$, we can find a $\bQ$-divisor $0 \leq \Delta \sim_{\bQ}(1-\epsilon)D$ for some $\epsilon \in (0,1)$, such that $(X,\Delta)$ is lc but not klt at $x$, $x$ is an isolated lc center and $(X,\Delta)$ is not klt at $y$.
    \end{defi}

\subsection{B-divisors and Generalized Pairs} 
    
    We recall the definition of b-divisors introduced by Shokurov. For details, one may refer to \cite{FG14}.\par
    \begin{defi}\em
        Let $\mathbb{K}$ be $\bZ,\bQ$,  or $\bR$. Let $X$ be a normal projective variety. A {\em b-$\mathbb{K}$-divisor} $\mathbf{D}$ of $X$ consists of a series of $\mathbb{K}$-divisors $\{D_{X'}\}$ indexed by all the birational models $X'$ of $X$ which satisfies that, for any birational morphism $\nu : X'' \to X'$ between two birational models $X''$ and $X'$, $\nu_*D_{X''} = D_{X'}$ holds. For a b-$\bR$-divisor, we always omit the symbol $\bR$ and just call it a b-divisor. \par
        
        Given a b-$\mathbb{K}$-divisor $\mathbf{D}$ of $X$ and a birational model $Y$ of $X$, we call $D_{Y}$ the {\em trace} of $\mathbf{D}$ on $Y$ and we also denote it by $\mathbf{D}_Y$.
        
        We say that a b-$\mathbb{K}$-divisor $\mathbf{D}$ is {\em b-$\mathbb{K}$-Cartier} if there exists a birational morphism $f:X' \to X$ such that 
        \begin{itemize}
            \item[(1)]  $\mathbf{D}_{X'}$ is $\mathbb{K}$-Cartier, and
            \item[(2)]  for any birational morphism $\nu : X'' \to X'$, $\nu^*\mathbf{D}_{X'} = \mathbf{D}_{X''}$ holds. 
        \end{itemize}
        We say that $\mathbf{D}$ is {\em b-$\mathbb{K}$-Cartier-nef} if furthermore
        \begin{itemize}
            \item[(3)] $\mathbf{D}_{X'}$ is nef. 
        \end{itemize}
        In this case, we say the b-divisor $\mathbf{D}$ {\em descends} to $X'$.
    \end{defi}

    \begin{defi}\em 
        A {\em generalized sub-pair} $(X,\Delta + \mathbf{M}_X)$ consists of a normal quasi-projective variety $X$, an $\bR$-divisor $\Delta$ and a b-$\bR$-Cartier-nef b-divisor $\mathbf{M}$ consisting of $\{M_{X'}\}$ such that $K_X + \Delta + \mathbf{M}_{X}$ is $\bR$-Cartier. If additionally $\Delta \geq 0$, we call $(X,\Delta + \mathbf{M}_X)$ a {\em generalized pair}. \par
    \end{defi}

    \begin{defi}\em
        Let $(X,\Delta +\mathbf{M}_X)$ be a generalized sub-pair, and $D$ be a prime divisor over $X$. Let $f: Y \to X$ is a log resolution of $(X, \Delta + \mathbf{M}_X)$ such that $D$ is a divisor on $Y$ and $\mathbf{M}$ descends to $Y$.  we write uniquely
        $$K_Y + \Gamma + \mathbf{M}_Y = f^*(K_X + \Delta + \mathbf{M}_X).$$
        We define the {\em generalized log discrepancy} of the prime divisor $D$ with respect to the generalized sub-pair $(X,\Delta + \mathbf{M}_X)$ to be 
            $$a(D,X,\Delta + \mathbf{M}_X):= 1 - \mu_D(\Gamma).$$
        Fix a number $\epsilon>0$. We say that $(X,\Delta + \mathbf{M}_X)$ is {\em sub-glc} (resp. {\em sub-gklt, sub-$\epsilon$-glc}) if $a(D,X,\Delta + \mathbf{M}_X) \geq 0$ (resp. $>0, \geq \epsilon$) holds for any prime divisor $D$ over $X$. If, in addition, $(X,\Delta+ \mathbf{M}_X)$ is a generalized pair, we say that $(X,\Delta + \mathbf{M}_X)$ is {\em glc} (resp. {\em gklt, $\epsilon$-glc}). Similarly, we can define {\em glc centers}, {\em glc places} as in Definition \ref{Pair-and-sing}.
    \end{defi}
    
    We next recall the uniform lower bound for log canonical thresholds for generalized version established in \cite{Bir21b}.
        
    \begin{thm}\label{bounded-glct}
        {\rm (\cite[Lemma 2.9]{Bir21b})} Let $d \in \bN$, $r>0$, $\epsilon >0$. Then, there is a real number $t>0$ that depends only on $d,r,\epsilon$ that satisfies the following. Assume that 
        \begin{itemize}
            \item[(1)] $(X,\Delta + \mathbf{M}_X)$ is a projective $\epsilon$-glc pair of dimension $d$;
            \item[(2)] $A$ is a very ample divisor on $X$ with $A^d \leq r$;
            \item[(3)] $D$ is an effective $\bR$-divisor on $X$ and $\mathbf{N}$ is a b-$\bR$-Cartier-nef b-divisor such that $D + \mathbf{N}_X$ is $\bR$-Cartier; 
            \item[(4)] $A-(\Delta + \mathbf{M}_X + D + \mathbf{N}_X)$ is pseudo-effective.
        \end{itemize}
        Then, the generalized pair 
            $$(X, \Delta+tD + \mathbf{M}_X + t\mathbf{N}_X)$$
        is gklt.       
    \end{thm}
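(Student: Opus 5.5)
The plan is to reduce the generalized statement to the ordinary (non-generalized) uniform lower bound for log canonical thresholds on bounded $\epsilon$-lc varieties, i.e.\ Birkar's theorem \cite[Theorem 1.6]{Bir19}. That theorem gives a $t>0$, depending only on $d,r,\epsilon$, with the following property: if $(X,B)$ is $\epsilon$-lc of dimension $d$, $A$ is very ample with $A^d\le r$, $A-B$ is pseudo-effective, $L\ge 0$ is an $\bR$-Cartier divisor and $A-L$ is pseudo-effective, then $\lct(X,B;L)\ge t$. I expect this input to be the real content, and I will not reprove it. The work is to absorb the nef parts $\mathbf{M}$ and $\mathbf{N}$ into honest effective divisors without destroying the singularity and degree bounds. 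Throughout I am allowed to replace $A$ by $2A$ (and $r$ by $2^d r$) and $\epsilon$ by $\epsilon/2$.

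\textbf{Step 1: pass to a model.} Take a log resolution $\phi\colon W\to X$ of $(X,\Delta+D)$ on which both $\mathbf{M}$ and $\mathbf{N}$ descend. Since $\mathbf{M}_W$ and $\mathbf{N}_W$ are nef, the negativity lemma gives
$$\phi^*\mathbf{M}_X=\mathbf{M}_W+E_M \quad\text{and}\quad \phi^*(D+\mathbf{N}_X)=\phi^{-1}_*D+\mathbf{N}_W+E_N,$$
with $E_M,E_N\ge 0$ exceptional. Gklt-ness of $(X,\Delta+tD+\mathbf{M}_X+t\mathbf{N}_X)$ can then be tested with the pairs written on $W$.

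\textbf{Step 2: replace the nef parts by effective divisors.} Fix a small $\delta>0$. The divisor $\mathbf{N}_W+\delta\phi^*A$ is nef but not ample, so I perturb it by a small anti-effective exceptional divisor $-\delta' F$ to get an ample class. I then choose a general effective $G_N\sim_{\bR}\mathbf{N}_W+\delta\phi^*A-\delta'F$ with arbitrarily small coefficients and in general position. I do the same for $\mathbf{M}_W$, producing $G_M$.

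Set
$$\Delta':=\Delta+\phi_*G_M \quad\text{and}\quad L:=D+\phi_*G_N.$$
Because the $G$'s are general with tiny coefficients, every divisor $E$ over $X$ satisfies
$$a(E,X,\Delta')\ge a(E,X,\Delta+\mathbf{M}_X)-O(\delta),$$
so $(X,\Delta')$ is $\epsilon/2$-lc once $\delta$ is small. Likewise, for every $E$ over $X$, the multiplicity of $\phi^*L$ along $E$ is at least the multiplicity of $D+\mathbf{N}$ along $E$ minus $O(\delta)$.

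On the degree side, $L\equiv D+\mathbf{N}_X+\delta A$ up to an $O(\delta')$ correction, and similarly for $\Delta'$. Hence
$$2A-(\Delta'+L)\ \text{is pseudo-effective},$$
using hypothesis (4). The perturbation by $F$ is the point needing care. I would let $\delta'\to 0$ after fixing everything else, and use that lc thresholds are governed by finitely many divisors on $W$.

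\textbf{Step 3: conclude.} Apply \cite[Theorem 1.6]{Bir19} to the $\epsilon/2$-lc pair $(X,\Delta')$, the very ample divisor $2A$, and $L$. This gives a $t_0$ depending only on $d,2^dr,\epsilon/2$ such that $(X,\Delta'+t_0L)$ is lc. Take $t=t_0/2$. For every divisor $E$ over $X$ this gives
$$a(E,X,\Delta+\mathbf{M}_X)-t\,\mathrm{mult}_E(D+\mathbf{N})\ \ge\ \tfrac12 a(E,X,\Delta')-O(\delta)\ \ge\ \epsilon/4-O(\delta).$$
Here $\mathrm{mult}_E(D+\mathbf{N})$ means the coefficient of $E$ in $\phi^*(D+\mathbf{N}_X)-\mathbf{N}_W$, computed on a model where $E$ appears. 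The right-hand side is positive for $\delta$ small, so $(X,\Delta+tD+\mathbf{M}_X+t\mathbf{N}_X)$ is gklt.

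The main obstacle is Step 2. The difficulty is to make the effective replacements of $\mathbf{M}_W$ and $\mathbf{N}_W$ uniform over all divisors $E$ over $X$, since infinitely many such $E$ exist. To handle this, I would verify the inequality only on the fixed log resolution $W$, where the pairs become log smooth after adding the general $G$'s, so that discrepancies over $W$ are controlled by the coefficients on $W$ alone.
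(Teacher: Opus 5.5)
Your reduction is correct and is essentially the standard argument behind \cite[Lemma 2.9]{Bir21b}, which the paper only cites without proof: on $W$, replace the nef parts by general effective divisors in the ample classes $\mathbf{M}_W+\delta\phi^*A-\delta'F$ and $\mathbf{N}_W+\delta\phi^*A-\delta'F$, apply Birkar's uniform lct bound for bounded $\epsilon$-lc pairs with $2A$, and halve $t$. The one thing to clean up is your $O(\delta)$ bookkeeping, which is unnecessary. As stated it is not uniform over all $E$, and the discrepancy inequality in Step 2 is the reverse of the one Step 3 needs. Instead, set $K_W+\Delta_W+\mathbf{M}_W=\phi^*(K_X+\Delta+\mathbf{M}_X)$; note that $\phi^*\mathbf{M}_X$ in Step 1 need not make sense, since $\mathbf{M}_X$ need not be $\bR$-Cartier. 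The negativity lemma then gives the exact identities $\phi^*(K_X+\Delta')=K_W+\Delta_W+G_M+\delta'F$ and $\phi^*L=\phi^*(D+\mathbf{N}_X)-\mathbf{N}_W+G_N+\delta'F$. So the perturbation only lowers log discrepancies and raises multiplicities, which is exactly the favourable direction in Step 3. The $\epsilon/2$-lc property of $(X,\Delta')$ is read off the coefficients of the log smooth sub-boundary $\Delta_W+G_M+\delta'F$ on $W$.
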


\subsection{Iitaka dimension and Iitaka volumes.}
    \begin{defi}\em
        Let $D$ be an $\bR$-divisor on a normal projective variety $X$. We define $\kappa_{\iota}(D)$, the {\em Iitaka dimension} of $D$ as follows.\par 
        If the $\bR$-linear system $|D|_{\bR}$ is empty, we set $\kappa(D) = -\infty$. Otherwise, we take an $\bR$-divisor $0 \leq D' \sim_{\bR}D$. For any non-negative integer $m$, the linear system $|\rounddown{mD'}|$ induces a rational map 
            $$\varphi_m : X \dashrightarrow \bP^{N_m},$$
        where $N_m = h^0(\rounddown{mD'}) - 1$. \par
        Then we define $$\kappa_{\iota}(D):= \max \{\dim \varphi_m(X) \mid m \in \bN\}.$$ 
        Note that this is independent of the choice of $D'$.
    \end{defi}

    \begin{defi}\em
        Let $D$ be an $\bR$-divisor on a normal projective variety $X$. If $\kappa_{\iota}(D) \geq 0$, we take a $\bR$-divisor $0 \leq D' \sim_{\bR} D$ and define the {\em Iitaka volume} of $D$ to be
        $$\ivol(D) := \limsup_{m \to +\infty} \frac{h^0(\rounddown{mD'})}{m^{\kappa_{\iota}(D)}/\kappa_{\iota}(D)!}. $$
        This is also independent of the choice of $D'$. If $\kappa_{\iota}(D) =-\infty$, we do not define the Iitaka volume of $D$. In particular, if $D$ is big, we define $\vol(D):=\ivol(D)$ and call it the {\em volume} of $D$.
    \end{defi}

\subsection{Stable minimal models.}\label{stable-minimal-models}
    We give the definitions of stable minimal models. For details we refer to \cite{Bir21b,Zhu25}.
    \begin{defi}\em (=Definition \ref{main-defi-for-smm})
        Let $d > 0$ be an integer, $\Phi \subseteq[0,1]$ be a set, $u,v>0$ be two real numbers. A {\em klt $(d,\Phi,u)$-stable minimal model} $(X,B),A$ consists of a projective klt pair $(X,B)$ and a divisor $A$ on $X$ such that
        \begin{itemize}
            \item[(1)] $\dim X = d$, and the coefficients of $B$ belong to $\Phi$; 
            \item[(2)] $K_X + B$ is semi-ample and gives a contraction $h: X \to Z$;
            \item[(3)] $A$ is an integral divisor that is big and semi-ample over $Z$ with $0 \leq \vol(A|_F) \leq u$ for a general fiber $F$ of $h:X \to Z$.
        \end{itemize}
    We say that $((X,B),A)$ is a {\em klt $(d,\Phi,{\leq}u,{\leq}v({\rm resp.} {=}v, {\geq}v))$-stable minimal model} if further
    \begin{itemize}
        \item[(4)] $\ivol(K_X + B) \leq v$ (resp. ${=}v, {\geq}v$) holds.
    \end{itemize}
    \end{defi}

    Note that in the above definition, we do not require $X$ to be $\bQ$-factorial. \par

    \begin{defi}\label{weak-smm}\em
        Let $d,\Phi,u,v$ be as above, and $(X,B)$ be a projective pair. We say that $(X,B)$ is a {\em weak klt $(d,\Phi, {\leq}u, {\leq}v({\rm resp. } {=}v, {\geq v}))$-stable minimal model} if there exists a klt $(d,\Phi, {\leq}u, {\leq}v({\rm resp. } {=}v, {\geq v}) )$-stable minimal model ($(X',B'),A'$), and a birational map $X \dashrightarrow X'$ such that $(X,B)$ and $(X', B')$ are crepant.      
    \end{defi}

    \begin{defi}\label{admits-klt-stable-minimal-model}\em
        Let $(X,B)$ be a projective klt pair and $d,\Phi,u,v$ be as above. We say that $(X,B)$ {\em admits a klt $(d,\Phi, {\leq}u, {\leq}v({\rm resp. } {=}v, {\geq v}))$-stable minimal model} if there exists a klt $(d,\Phi, {\leq}u, {\leq}v({\rm resp. } {=}v, {\geq v}))$-stable minimal model $((X',B'),A')$ such that $(X', B')$ is a weak lc model of $(X,B)$ (for the definition of weak lc model, see \cite[Definition 2.1]{Bir12}). 
    \end{defi}

\subsection{Nakayama-Zariski decomposition of divisors.}\label{NZ-decomp}

    We refer mainly to \cite[Chapters III and V]{N04} for the following results.

    \begin{defi}\em
        Let $X$ be a complex smooth projective variety. For a big $\bR$-divisor $D$ on $X$ and any prime divisor $\Gamma$, define 
        $$\sigma_{\Gamma}(D) := \inf\{\mu_{\Gamma}(D') \mid 0 \leq D' \sim_\bR D\}.$$
        If $D$ is only pseudo-effective, define $\sigma_{\Gamma}(D) := \lim_{\epsilon \to 0}\sigma_{\Gamma}(D + \epsilon A)$, where $A$ is any ample divisor. Note that this definition does not depend on the choice of $A$. \par 
        Then we define
            $$N_\sigma(D) := \sum_{\Gamma}\sigma_{\Gamma}(D) \Gamma \quad
            \text{and}\quad P_\sigma(D):= D -N_\sigma(D).$$
        We call the decomposition $D = P_\sigma(D) + N_\sigma(D)$ the {\em $\sigma$-decomposition}. If $P_\sigma(D)$ is nef, we say that $D$ {\em admits a Zariski-decomposition}.\par        
    \end{defi}
    Also, we need the relative version of Nakayama-Zariski decomposition of divisors.
    \begin{defi}\em 
        Let $f: X \to Y$ be a projective surjective morphism between smooth varieties and $D$ be an $f$-pseudo-effective $\bR$-divisor on $X$. Assume that $D$ is $f$-big and $\Gamma$ is a prime divisor. Then we define 
            $$\sigma_{\Gamma}(D;X/Y) := \inf\{\mu_{\Gamma}(D') \mid 0 \leq D' \sim_{\bR,f} D\}.$$

        If $D$ is only $f$-pseudo-effective, we define $$\sigma_{\Gamma}(D;X/Y):= \lim_{\epsilon \to 0}\sigma_{\Gamma}(D + \epsilon A;X/Y)$$ for a relatively ample divisor $A$. Again it is seen that the definition does not depend on the choice of $A$. If we have $\sigma_{\Gamma}(D;X/Y) < + \infty$ for any prime divisor $\Gamma$ on $X$, then we define that
            $$N_\sigma(D;X/Y) := \sum_{\Gamma}\sigma_{\Gamma}(D;X/Y)\Gamma,$$
        and that $P_\sigma(D;X/Y):= D -N_\sigma(D;X/Y)$.
    \end{defi}

    \begin{lemma}\label{property-for-NZ-decompose}{\rm (\cite[Theorem 5.15]{N04})}
        Let $f:X \to Y$ be a contraction between smooth varieties. Let $D_X$ be a prime divisor on $X$ and $D_Y$ be a prime divisor on $Y$ with $f(D_X) = D_Y$. Then for any pseudo-effective divisor $D$ on $Y$, we have 
        $$\sigma_{D_{X}}(f^*D) = (\mu_{D_X}f^*D_Y)\sigma_{D_{Y}}(D).$$
        In particular, we have 
        $$N_\sigma(f^*D) = f^*(N_\sigma(D)).$$
    \end{lemma}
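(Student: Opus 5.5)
The plan is to prove the two inequalities
$$\sigma_{D_X}(f^*D)\le e\,\sigma_{D_Y}(D)\quad\text{and}\quad \sigma_{D_X}(f^*D)\ge e\,\sigma_{D_Y}(D)$$
separately, where $e:=\mu_{D_X}f^*D_Y$. Throughout I fix an ample divisor $H$ on $Y$ and an ample divisor $A$ on $X$. I also use two standard facts from \cite[Chapter III]{N04}:
\begin{itemize}
\item[(a)] $\sigma_\Gamma$ is convex: $\sigma_\Gamma(B_1+B_2)\le\sigma_\Gamma(B_1)+\sigma_\Gamma(B_2)$.
\item[(b)] $\sigma_\Gamma$ vanishes on nef divisors.
\end{itemize}
The statement $N_\sigma(f^*D)=f^*N_\sigma(D)$ then follows formally. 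Indeed, a prime divisor $\Gamma$ on $X$ that does not dominate a prime divisor of $Y$ is not contained in any $f^*(\cdot)$ of an effective divisor with general support, so $\sigma_\Gamma(f^*D)=0$; the remaining primes are handled by the displayed formula.

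\emph{The inequality $\le$.} Let $0\le D'\sim_{\bR}D+\varepsilon H$. Then $f^*D'\ge0$, $f^*D'\sim_{\bR}f^*(D+\varepsilon H)$, and $\mu_{D_X}f^*D'=e\,\mu_{D_Y}D'$. Given $\delta>0$, choose $\varepsilon$ so small that $\delta A-\varepsilon f^*H$ is ample. By (a) and (b),
$$\sigma_{D_X}(f^*D+\delta A)\le \sigma_{D_X}\big(f^*(D+\varepsilon H)+(\delta A-\varepsilon f^*H)\big)\le e\,\mu_{D_Y}(D').$$
Take the infimum over $D'$, then let $\varepsilon\to0$ and finally $\delta\to0$. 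This gives $\sigma_{D_X}(f^*D)\le e\,\sigma_{D_Y}(D)$, using the definition of $\sigma$ for pseudo-effective divisors on both sides.

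\emph{The inequality $\ge$.} This is the main obstacle, since a divisor $G\sim_{\bR} f^*D+\delta A$ on $X$ is not a pullback. I would push $G$ down by a norm along a multisection.
\begin{enumerate}
\item By approximation, and since $\sigma$ is continuous in the big cone, reduce to $\bQ$-divisors and to $G\ge0$ with $mG\sim m(f^*D+\delta A)$.
\item Cut $X$ by $\dim X-\dim Y$ general members of $|lA|$, with $l$ fixed and depending only on $A$. This gives a subvariety $W\subseteq X$ that is generically finite over $Y$, of degree $\deg(W/Y)=:k$, and meets $D_X$ properly at points of $D_X$ lying over the generic point of $D_Y$.
\item Take the norm $\mathrm{Nm}_{W/Y}(G|_W)$, or after Stein factorization the pushforward of the cycle $G|_W$ via the finite part. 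It is an effective divisor on $Y$ that is $\bR$-linearly equivalent to $k\,D+\delta\, f_*(A|_W)$. Moreover $f_*(A|_W)\equiv cH$ for a constant $c$ independent of $\delta$ after enlarging $H$, which suffices up to numerical classes. One could also replace $\sim_{\bR}$ by numerical equivalence, which is harmless for $\sigma$ by \cite{N04}.
\item Check the multiplicity along $D_Y$. Locally over the generic point of $D_Y$, we have $W\cap D_X\ne\emptyset$, and $f^*D_Y$ contains $D_X$ with multiplicity $e$. The ramification computation then gives
$$\mu_{D_Y}\mathrm{Nm}(G|_W)\ \ge\ \frac{k}{e}\,\mu_{D_X}G$$
up to a bounded correction. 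It is cleaner to choose $W$ generically étale over $Y$ near the points of $D_X$, so that the local degree contributions balance exactly.
\end{enumerate}
Dividing by $k$ yields
$$\sigma_{D_Y}\big(D+\tfrac{\delta c}{k}H\big)\le \tfrac1e\,\mu_{D_X}G .$$
Taking the infimum over $G$ and letting $\delta\to0$ gives $e\,\sigma_{D_Y}(D)\le\sigma_{D_X}(f^*D)$.

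The delicate point is step 4: making the local multiplicity comparison exact, uniformly in $G$. If the norm argument proves awkward, I would instead use the sheaf-theoretic version. Here $G$ is a section of $\OO_Y(mD)\otimes f_*\OO_X(m\delta A)$. Composing with a generically surjective map $f_*\OO_X(m\delta A)\to\OO_Y(m\delta cH)$, obtained from evaluation on the multisection, produces the desired section of $\OO_Y(m(D+\delta cH))$ vanishing to order at least $\mu_{D_X}(mG)/e$ along $D_Y$.
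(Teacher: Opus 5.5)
The paper gives no proof of this lemma; it simply quotes Nakayama. So I judge your proposal on its own. Your $\le$ direction is fine, but there are two genuine gaps.

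\textbf{The second assertion.} Your deduction of $N_\sigma(f^*D)=f^*N_\sigma(D)$ only works for horizontal primes. Consider an $f$-exceptional prime $\Gamma$, i.e.\ one with $\operatorname{codim} f(\Gamma)\ge 2$; such primes exist for every non-equidimensional contraction, e.g.\ a blow-up. For these you cannot take members of $|D+\varepsilon H|_{\bR}$ with general support, because for small $\varepsilon$ every member contains each curve $C$ with $D\cdot C<0$. Then $\sigma_\Gamma(f^*D)$ can be positive even though $\mu_\Gamma f^*N_\sigma(D)=0$.

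Concretely, set up the following:
\begin{itemize}
\item $C\subset Y$ is a flopping $(-1,-1)$-curve and $Y\dashrightarrow Y^+$ is the flop;
\item $D$ is the strict transform of an ample divisor $D^+$;
\item $f\colon X\to Y$ is the blow-up of $C$, with exceptional divisor $E$, and $\nu\colon X\to Y^+$ is the other contraction.
\end{itemize}
Then $N_\sigma(D)=0$, since $Y\dashrightarrow Y^+$ is an isomorphism in codimension one. On the other hand $f^*D=\nu^*D^++aE$ with $a=D^+\cdot C^+>0$. Every effective $G\sim_{\bR}f^*D$ equals $\nu^*\nu_*G+aE\ge aE$, so $\sigma_E(f^*D)\ge a$. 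Hence the identity is false as stated.

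Granting the first formula, your argument only shows that the two sides agree away from $f$-exceptional divisors. That gives equality for equidimensional $f$, and trivially for nef $D$, which is the only case the paper actually uses.

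\textbf{The inequality $\sigma_{D_X}(f^*D)\ge e\,\sigma_{D_Y}(D)$.} Your Step 4 points the wrong way. Since $\sigma_{D_Y}(\cdot)\le\frac1k\mu_{D_Y}\mathrm{Nm}(G|_W)$, you need the \emph{upper} bound $\mu_{D_Y}\mathrm{Nm}(G|_W)\le\frac ke\,\mu_{D_X}G$; a lower bound yields nothing. Your sheaf-theoretic fallback is stated in the same reversed direction.

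The upper bound is exactly where the argument breaks. For $W$ general with respect to $G$,
\[
\mu_{D_Y}\mathrm{Nm}(G|_W)=\sum_i\frac{k_i}{e_i}\,\mu_{D^{(i)}_X}G,\qquad \sum_i k_i=k,
\]
where $D^{(i)}_X$ runs over \emph{all} components of $f^{-1}(D_Y)$ dominating $D_Y$, and $e_i=\mu_{D^{(i)}_X}f^*D_Y$.
\begin{itemize}
\item If $D_X$ is the only such component (e.g.\ $f$ birational), the sum equals $\frac ke\mu_{D_X}G$ and your proof closes.
\item A contraction can have several such components, e.g.\ $Y\times\bP^1$ blown up along a section over $D_Y$. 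Then the norm only controls a weighted average, and nothing you wrote stops $G$ from putting its multiplicity on the other components.
\item When $e>1$, $W\to Y$ cannot be generically \'etale along $W\cap D_X$, since $f^*D_Y|_W$ vanishes to order $e$ there.
\item A single generically surjective map $f_*\OO_X(m\delta A)\to\OO_Y(m\delta cH)$ cannot see vanishing along just one component of $f^{-1}(D_Y)$.
\end{itemize}

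The missing ingredient is a balancing lemma. There should be a constant $c_0$, independent of $G$ and $\delta$, such that
\[
\mu_{D^{(i)}_X}G/e_i\le \mu_{D^{(j)}_X}G/e_j+c_0\delta
\]
for all $i,j$ and all effective $G\sim_{\bR}f^*D+\delta A$. To prove it, restrict to a general surface $S$ fibred over a general curve through a general point $y\in D_Y$. Every component $\gamma$ of the fibre $S_y$ satisfies $G|_S\cdot\gamma=\delta A\cdot\gamma$, and Zariski's lemma for $S_y$ then bounds the spread of the ratios. With this lemma, the display gives $\mu_{D_Y}\mathrm{Nm}(G|_W)\le k(\mu_{D_X}G/e+c_0\delta)$, which is what you need.

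A minor point: $f_*(A|_W)\equiv cH$ is false in general, but this is harmless, since ampleness of $cH-f_*(A|_W)$ suffices.
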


    We need the following property of the Zariski negative part in a family.
    \begin{lemma}\label{Zariski-negative-in-family}
        Assume that $(V,C)$ is a projective lc pair and $f: V \to T$ is a contraction with a general fiber $X$. If $K_V + C$ is pseudo-effective, then 
        $$N_\sigma(K_V+C;V/T)|_X= N_\sigma(K_X + C_X).$$
    \end{lemma}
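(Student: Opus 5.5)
This is a statement of Nakayama type, and I would prove it by comparing $\sigma$-coefficients prime divisor by prime divisor. Two implicit conventions need fixing first. The $\sigma$-decomposition is defined on smooth varieties, so I pass to a log resolution $\nu:V'\to V$, write $K_{V'}+C'=\nu^*(K_V+C)+E$ with $E\ge 0$ exceptional, and note that $N_\sigma$ of $K_V+C$ is taken on $V'$. Replacing $V$ by $V'$ and $C$ by $C'$ changes $N_\sigma$ only by the exceptional part, and this is compatible with restriction to a general fiber. We may therefore assume that $V$ and $T$ are smooth and that $f$ is a contraction. Since $K_V+C$ is pseudo-effective and $X$ is general, $K_X+C_X=(K_V+C)|_X$ is pseudo-effective, so $f$-pseudo-effectivity holds and the relative $N_\sigma(K_V+C;V/T)$ is defined. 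I will also use that for $f$-big divisors the relative $\sigma_\Gamma$ is finite.

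The first step is the inequality $N_\sigma(K_V+C;V/T)|_X\ge N_\sigma(K_X+C_X)$. Fix an ample $H$ on $V$ and $\delta>0$. For any $0\le D'\sim_{\bR,f}K_V+C+\delta H$, shrinking $T$ to an open set over which $D'-(K_V+C+\delta H)$ is $\bR$-linearly trivial, the restriction $D'|_X$ is effective and $\bR$-linearly equivalent to $K_X+C_X+\delta H|_X$. For any prime $\Gamma$ with $\Gamma|_X=\sum_j \Gamma_j$ (reduced, since $X$ is general), this gives $\mu_{\Gamma_j}(D'|_X)=\mu_\Gamma D'$. Hence $\sigma_{\Gamma_j}(K_X+C_X+\delta H|_X)\le\sigma_\Gamma(K_V+C+\delta H;V/T)$. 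Letting $\delta\to 0$ gives the inequality; the limits are fine because $H|_X$ is ample on $X$.

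The reverse inequality is the main obstacle, since it requires lifting divisors from one general fiber to the family. I would proceed as follows. Take $\delta>0$ and $0\le G\sim_{\bR}K_X+C_X+\delta H|_X$ whose coefficients nearly realize $\sigma_{\Gamma_j}(K_X+C_X+\delta H|_X)$. Because $K_X+C_X+\delta H|_X$ is big, we may take $G$ to be a $\bQ$-divisor combination of members of $|mL_X|$ for suitable integral $L$ approximating $K_V+C+\delta H$. By the invariance of plurigenera in the lc setting, or more simply by generic flatness and semicontinuity, $f_*\OO_V(mL)$ is locally free near a general point of $T$ and its fiber surjects onto $H^0(X,mL_X)$ for general $X$. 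A subtlety is that the restriction map on a Zariski open set of $T$ surjects onto the sections over a general fiber, so each member of $|mL_X|$ lifts to an effective divisor in $|mL|$ over an open subset of $T$, that is, to an element of the relative linear system $\sim_f$. The coefficient along $\Gamma$ of the lift is the generic coefficient along the components $\Gamma_j$. There is a monodromy issue here, because the components $\Gamma_j$ may be permuted as $X$ moves. To handle it, I would argue with the infimum over all of $X$'s sections simultaneously: the function $t\mapsto\sigma_{\Gamma_{j}}$ on fibers is constant on a dense open set, since it is determined by generic orders of vanishing of sections of the coherent sheaf $f_*\OO_V(mL)$. After applying Lemma \ref{property-for-NZ-decompose} to a Galois base change, one can reduce to the case where each $\Gamma$ restricts to a single orbit with equal multiplicities. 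Combined with Nakayama's description of relative $\sigma$ as an $\bR$-limit, this gives $\sigma_\Gamma(\cdot;V/T)\le\sigma_{\Gamma_j}(\cdot|_X)$.

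Finally, let $\delta\to0$ on both sides and check that prime divisors of $X$ that are not restrictions of divisors of $V$ do not appear. A prime divisor on a general fiber varies in a family that sweeps out a divisor of $V$ whose restriction contains it, by the generic constancy above. Combining the two inequalities gives the equality.
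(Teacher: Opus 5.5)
You have a genuine gap: your argument proves the equality only on a very general fiber, not on a general one.

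Your reduction to a log resolution is the same as the paper's. Restricting relatively equivalent effective divisors is also the right way to get $N_\sigma(K_V+C;V/T)|_X\ge N_\sigma(K_X+C_X)$. The problem is the step you substitute for the paper's key input. After the reduction the paper does not argue by hand: it applies \cite[Lemma 2.3.2]{HMX18} to the log smooth lc pair $(V',\Gamma)$, and that is where equality on a \emph{general} fiber comes from.

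Generic flatness plus cohomology and base change give, for each fixed $m$ and each fixed integral $L$, a dense open set $U_{m,L}\subseteq T$ over which every section of $mL|_{X_t}$ lifts. But $\sigma_{\Gamma_j}$ is an infimum over all $m$, and you also let $\delta\to 0$ and vary the approximation $L$. So the lifting is available only for $t\in\bigcap_{m,L}U_{m,L}$, the complement of a countable union of proper closed subsets. Your sentence that $t\mapsto\sigma_{\Gamma_j}$ is constant on a dense open set ``since it is determined by generic orders of vanishing of sections of $f_*\OO_V(mL)$'' is exactly the missing step, since no single $m$ determines $\sigma$.

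The same countability sits in two other places. In your first inequality, a fixed $X$ must be general with respect to a whole minimizing sequence $D'_k$, not a single $D'$. In your last paragraph, a prime divisor of $X_t$ is guaranteed to sweep out a divisor of $V$ only for $t$ outside the images of the countably many non-dominant components of the relative Hilbert scheme. Such a union can be Zariski dense; Noether--Lefschetz loci are the standard example.

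After your first paragraph you never use that the divisor is $K_V+C$ with $(V,C)$ lc. Yet this is precisely the structure the paper relies on, through the cited lemma for log smooth lc pairs, to get uniformity over a Zariski open subset of $T$. You mention invariance of plurigenera but then fall back on generic flatness, which works one $m$ at a time. To close the gap you must either import a statement uniform in $m$ over a fixed open subset of $T$ for the log smooth lc family, which is what the citation of \cite[Lemma 2.3.2]{HMX18} supplies, or restrict the claim to very general fibers.

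Two smaller points:
\begin{itemize}
\item The monodromy discussion and the Galois base change are unnecessary. Any lift $\tilde G$ with $\tilde G|_X=G_X$ satisfies $\mu_\Gamma(\tilde G)\le\min_j\mu_{\Gamma_j}(G_X)$; only this inequality holds, not equality with a ``generic coefficient''. Combined with your first inequality, this already forces every $\sigma_{\Gamma_j}(K_X+C_X)$ to equal $\sigma_\Gamma(K_V+C;V/T)$ on the fibers where both inequalities are available.
\item Lemma~\ref{property-for-NZ-decompose} is stated for contractions, so it would not apply to a finite base change anyway.
\end{itemize}
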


    \begin{proof}
        We take $\nu: V' \to V$ to be a log resolution of $(V,C)$ such that $g:=\nu|_{X'} : X' \to X$ is also a log resolution of $(X,C|_X)$, where $X'$ is the strict transform of $X$.
        $$
        \xymatrix{
            X' \ar@{^(->}[r]\ar[d]_{g} & V'\ar[d]^{\nu} & \\
            X  \ar@{^(->}[r]& V\ar[r]^{f}  & T
        }   
        $$
        We write uniquely 
        $$\nu^*(K_V + C) = K_{V'} + \Gamma- F,$$
        where $\Gamma,F$ are effective with no common components. Since $F$ is supported on the exceptional locus of $\nu$, we see that
        $$N_\sigma(K_{V'} + \Gamma; V'/T) = \nu^*(N_\sigma (K_V + C;V/T)) + F.$$
        Since $X$ is a general fiber, $X'$ only meets horizontal$/T$ part of $\Gamma$ and $F$. Hence
        $$N_{\sigma}(K_{X'} + \Gamma|_{X'}) = g^*(N_\sigma(K_X + C|_X)) + F|_{X'}.$$
        On the other hand, as $(V',\Gamma')$ is a log smooth lc pair, \cite[Lemma 2.3.2]{HMX18} gives 
        $$N_\sigma(K_{V'} + \Gamma; V'/T)|_{X'} = N_{\sigma}(K_{X'} + \Gamma|_{X'}).$$
        Combining these equalities yields
        $$N_\sigma(K_V+C;V/T)|_X= N_\sigma(K_X + C_X),$$
        completing the proof.
    \end{proof}
    
\subsection{Adjunction formula for fibrations.}{\label{adjunction}} 
    We recall the generalized adjunction formulas for fiber spaces, for which we mainly refer to \cite{Kaw98,Amb99,Amb04}.\par

    \begin{defi}\em
        Let $(X,B + \mathbf{M}_X)$ be a generalized sub-pair. Let $\mathbf{A}(X,B + \mathbf{M}_X)$ be a b-divisor that consists of $\{A_{X'}\}$ for all birational models $X'$ of $X$, where $A_{X'}$ are defined as follows. For any birational morphism $\nu : X' \to X$ from a normal variety $X'$, we define $$A_{X'}:=K_{X'}  + \mathbf{M}_{X'}- \nu^*(K_X + B + \mathbf{M}_X).$$
        We call $\mathbf{A}(X,B + \mathbf{M}_X)$ the {\em generalized discrepancy b-divisor } of $(X,B + \mathbf{M}_X)$. We define a sheaf on $X$ to be $\OO_{X}(\roundup{\mathbf{A}(X,B + \mathbf{M}_X)}) :=\tilde{\nu}_*\OO_{Y}(\roundup{A_{Y}})$ where $\tilde{\nu}: Y \to X$ is any log resolution of $(X,B + \mathbf{M}_X)$ with $\mathbf{M}$ descending to $Y$.

    \end{defi}

    \begin{defi}\em 
        A {\em generalized klt-trivial fibration} $f: (X,B + \mathbf{M}_X) \to Z$ consists of a contraction $f:X \to Z$ between normal projective varieties and a generalized sub-pair $(X,B + \mathbf{M}_X)$ that satisfies the following conditions:
        \begin{enumerate}
            \item[(1)] $(X,B + \mathbf{M}_X)$ is generalized sub-klt over the generic point of $Z$;
            \item[(2)] ${\rm rank}\ f_*\OO_{X}({\roundup{\mathbf{A}(X,B + \mathbf{M}_{X})}}) =1$;
            \item[(3)] $K_X + B + \mathbf{M}_X \sim_{\bR,f} 0$.
        \end{enumerate}
        In particular, if $\mathbf{M}_X = 0$, we simply say $f: (X,B) \to Z$ is a {\em klt-trivial fibration}.
    \end{defi}

    \begin{cons}\label{Adjuntion-for fiberspace}\em
        Given a generalized klt-trivial fibration $$f: (X,B + \mathbf{M}_X) \to Z,$$
        we will do adjunction to get a generalized pair on $Z$.\par

        For any prime divisor $P$ on $Z$, denote by $\eta_p$ the generic point of $P$. We define
        $$t_P := \sup\{a\in \bR \mid (X,B + af^*P + \mathbf{M}_X)\text{ is generalized sub-lc over $\eta_p$.}\}.$$\par 
        Next, we define $B_{Z} := \sum_{P}(1-t_P )P$, where the sum is taken over all prime divisors on $Z$. Then it is seen that $B_{Z}$ is a divisor and we call $B_{Z}$ the {\em discriminant part} of $(X,B +\mathbf{M}_X)$ on $Z$ (see \cite[Section 3.4]{FG14}). Then there exists an $\bR$-divisor $M_{Z}$ such that 
        $$K_{X} + B \sim_\bR f^*(K_{Z} + B_{Z} + M_{Z}).$$
        We call $M_{Z}$ the {\em moduli part} of $(X,B + \mathbf{M}_X)$ on $Z$. \par

        For any birational morphism $\sigma: Z' \to Z$, we take a birational morphism $\pi : X' \to X$ such that the induced rational map $f' : X' \dashrightarrow Z'$ is a morphism. Let 
        $$K_{X'} + B_{X'} + \mathbf{M}_{X'} = \pi^*( K_X + B_X + \mathbf{M}_X)$$
        be the crepant pull-back. We can similarly define the discriminant part $B_{Z'}$ and the moduli part $M_{Z'}$ on $Z'$. Although $M_{Z'}$ is only determined up to $\bR$-equivalence, we can take $M_{Z'}$ such that $M_{Z}$ is the pushdown of $M_{Z'}$. Thus, we can regard $\{M_{Z'}\}$ as a b-divisor $\overline{\mathbf{M}}$.
    
    \end{cons}

    \begin{lemma}\label{Existence-for-generalized}{\rm (See \cite[Theorem 0.2]{Amb04} and \cite[Theorem 2.23]{JLX22})}
        Let $f: (X,B + \mathbf{M}_X) \to Z$ be a generalized klt-trivial fibration. We construct the moduli b-divisor $\overline{\mathbf{M}}$ as above. Then $\overline{\mathbf{M}}$ is b-$\bR$-Cartier-nef and $(Z,B_Z + \overline{\mathbf{M}}_{Z})$ is a generalized sub-pair. 
    \end{lemma}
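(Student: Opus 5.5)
The statement has two parts: $(Z,B_Z+\overline{\mathbf{M}}_Z)$ is a generalized sub-pair, and the moduli b-divisor $\overline{\mathbf{M}}$ is b-$\bR$-Cartier-nef. I would prove the first directly from Construction \ref{Adjuntion-for fiberspace}, and reduce the second to Ambro's theorem for ordinary klt-trivial fibrations \cite[Theorem 0.2]{Amb04} by a perturbation and limiting argument, in the spirit of \cite[Theorem 2.23]{JLX22}.

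\textbf{Sub-pair part.} Since $K_X+B+\mathbf{M}_X\sim_{\bR,f}0$, there is an $\bR$-Cartier divisor $D$ on $Z$ with $K_X+B+\mathbf{M}_X\sim_\bR f^*D$. By condition (1), $t_P=1$ for all but finitely many $P$, so $B_Z$ is a genuine $\bR$-divisor. We then set $M_Z:=D-K_Z-B_Z$, so that $K_Z+B_Z+M_Z=D$ is $\bR$-Cartier. The discriminant is compatible with pushforward: for $\sigma:Z'\to Z$, the coefficient of a non-exceptional $P$ is computed over $\eta_P$, where $\sigma$ is an isomorphism. Hence $\sigma_*B_{Z'}=B_Z$, and $M_{Z'}$ may be normalised so that $\sigma_*M_{Z'}=M_Z$.

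\textbf{Reduction to $\bQ$-coefficients and to the ordinary case.} First I would reduce to $\bQ$-coefficients. Fix a log resolution $X'\to X$ on which $\mathbf{M}$ descends and a model $Z'$. The conditions "$K+B+M\sim_{\bR,f}0$, sub-klt over $\eta_Z$, $M_{X'}$ nef" cut out a rational polytope in the space of coefficients of $B$ and of the nef summands of $M_{X'}$. This lets us write the data as a convex combination of $\bQ$-data satisfying the same hypotheses. Since $B_Z$ is concave and piecewise linear in the data, $M_Z$ is convex-linear in the data up to the usual correction, which I would check. It therefore suffices to treat $\bQ$-coefficients.

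Next I would remove the nef part. Let $H$ be ample on $X'$, chosen of the form $f'^*H_{Z'}+A'$ with $A'$ ample. For each $\delta>0$, choose a general $0\le G_\delta\sim_\bQ M_{X'}+\delta H$. The resulting sub-pair $(X',B_{X'}+G_\delta)$ has the same discriminant over every $\eta_P$ as the generalized pair, up to an error tending to $0$ with $\delta$, because $G_\delta$ is general and so contains no lc centres or fibre components over codimension one points. It is not quite $f$-trivial, however.

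To repair this, I would restrict to the generic fibre, where $\mathbf{M}$ is numerically controlled by $-(K+B)$. I would then use the canonical bundle formula with nef part, as in Filipazzi's and Jiang--Liu--Xie's treatment, replacing $\delta A'$ by a vertical correction. The output is a family of ordinary klt-trivial fibrations whose moduli b-divisors $\overline{\mathbf{M}}^{(\delta)}$ converge to $\overline{\mathbf{M}}$ coefficientwise.

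\textbf{Nefness and descent.} Ambro's theorem, after a semistable (Abramovich--Karu) reduction that makes the discriminant behave well, shows that each $\overline{\mathbf{M}}^{(\delta)}$ is b-nef and descends to a model $Z''$ of $Z$ independent of $\delta$. That model is one where the discriminant locus is snc and the family is toroidal. Taking limits, $\overline{\mathbf{M}}_{Z''}$ is a limit of nef classes, hence nef, and its pullbacks are compatible on higher models. Therefore $\overline{\mathbf{M}}$ is b-$\bR$-Cartier-nef.

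\textbf{Main obstacle.} The delicate step is the perturbation: arranging that the approximating klt-trivial fibrations are genuinely $f$-trivial while their discriminants and moduli parts converge uniformly, and that the descent model does not depend on $\delta$. If that fails, I would instead invoke the Hodge-theoretic nefness argument directly for generalized pairs, as done in \cite{JLX22}.
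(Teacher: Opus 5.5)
The paper gives no proof of this lemma; it is quoted from \cite[Theorem 0.2]{Amb04} and \cite[Theorem 2.23]{JLX22}. Your closing fallback is therefore exactly the paper's route. The self-contained reduction you put in front of it, however, has a genuine gap, precisely at the step you flag.

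With $H=f'^*H_{Z'}+A'$ and $G_\delta\sim_\bQ M_{X'}+\delta H$, you get $K_{X'}+B_{X'}+G_\delta\sim_\bR f'^*(D_{Z'}+\delta H_{Z'})+\delta A'$. Here $A'$ is ample on every fibre, while a vertical divisor restricts trivially to a general fibre. So no vertical correction restores $f'$-triviality, and restricting to the generic fibre does not help, because that is where the obstruction lives.

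Any correction $R\sim_{\bR,f'}-\delta A'$ is anti-ample on a general fibre. The new boundary therefore typically acquires horizontal components with negative coefficients. These put a horizontal divisor into $\roundup{\mathbf{A}}$ and in general destroy the rank-one condition (2), which is exactly what Ambro's positivity needs. This is not a technicality. On $\mathbb{F}_n\to\bP^1$, take general $S_1,\dots,S_5\in|S_+|$ and $B=\frac34(S_1+\dots+S_4)-S_5$. Then $K+B\sim(n-2)F$ and the generic fibre is sub-klt. Each of the $6n$ points $S_i\cap S_j$ ($i<j\le 4$) contributes $\frac12$ to $B_Z$, and the points $S_i\cap S_5$ contribute nothing, so $\deg M_Z=-2n<0$. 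Producing the approximants by appeal to ``the canonical bundle formula with nef part'' is circular, since that is the statement being proved.

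Even granting approximants, your claimed $\delta$-independent descent model is unsupported. The natural descent models from Ambro's theorem are built from the locus where $(X',B_{X'}+G_\delta)$ fails to be relatively snc. That locus contains the branch locus of the horizontal divisor $G_\delta$ over $Z'$, which moves with $\delta$. Without a common model, a coefficientwise limit gives neither the b-$\bR$-Cartier property nor nefness on a fixed model, because pushforwards of nef classes from higher models need not be nef.

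There are also smaller points:
\begin{itemize}
\item Your $\bQ$-reduction treats $M_{X'}$ as a nonnegative combination of nef $\bQ$-Cartier divisors. That is an NQC hypothesis, not part of the statement: an irrational extremal ray of the round nef cone of an abelian surface with $\rho\ge 3$ is not such a combination.
\item The piecewise-linear correction for $B_Z$ must be controlled on all models at once, which again needs a common descent model.
\item Your ``sub-pair part'' is not independent of the rest, since a generalized sub-pair in the paper's sense already requires $\overline{\mathbf{M}}$ to be b-$\bR$-Cartier-nef.
\end{itemize}
As written, the proposal proves the lemma only through the citation it shares with the paper.
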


\subsection{Excellent models, strong excellent models for adjunction.}
    Next, we give the definition of excellent models and strong excellent models for a generalized klt-trivial fibration.

    \begin{defi}\label{defn-excellent-model}\em 
        Let $f:(X,B + \mathbf{M}_X) \to Z$ be a generalized klt-trivial fibration. Let $\sigma:Z' \to Z$, $\pi : X' \to X$ be birational morphisms where $X',Z'$ are projective and the induced map $f' : X' \dashrightarrow Z'$ is a morphism. We say that $f' :X' \to Z'$ is an {\em excellent model} with respect to $f:(X,B + \mathbf{M}_X) \to Z$ if it satisfies that
        \begin{itemize}
            \item[(1)] the moduli part $\mathbf{M}_{X'}$ and $\overline{\mathbf{M}}_{Z'}$ descend to $X'$ and $Z'$ respectively. 
        \end{itemize}
        We say that $f': X' \to Z'$ is a {\em strong excellent model} with respect to $f:(X,B + \mathbf{M}_X) \to Z$ if it further satisfies that
        \begin{itemize}
            \item[(2)] $(X',B')$ is log smooth and teminal, where $(X',B' + \mathbf{M}_{X'})$ is the crepant model of $(X,B+\mathbf{M}_X)$; and
            \item[(3)] for any prime divisor $P$ on $X'$ with $\mu_{P}B'>0$, the image $f'(P)$ is a prime divisor on $Z'$. 
        \end{itemize}
        
    \end{defi}

    \begin{lemma}\label{Existence-Strong-Excellent}
        Let $f:(X,B +\mathbf{M}_X) \to Z$ be a generalized klt-trivial fibration. Then we have the following.
        \begin{itemize}
            \item[(1)] There exists an excellent model with respect to $f:(X,B +  \mathbf{M}_X) \to Z$.
            \item[(2)] If  $(X,B + \mathbf{M}_X)$ is a sub-gklt generalized sub-pair, then there exists a strong excellent model with respect to $f:(X,B + \mathbf{M}_X) \to Z$.
        \end{itemize}
    \end{lemma}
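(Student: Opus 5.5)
Part (1) follows from the existence results already recalled: for a generalized klt-trivial fibration $f:(X,B+\mathbf{M}_X)\to Z$, Lemma \ref{Existence-for-generalized} says $\overline{\mathbf{M}}$ is b-$\bR$-Cartier-nef. Hence there is a birational model $\sigma:Z'\to Z$ to which $\overline{\mathbf{M}}$ descends. Likewise, since $\mathbf{M}$ is b-$\bR$-Cartier-nef, there is a birational model of $X$ to which $\mathbf{M}$ descends. We take a resolution $\pi:X'\to X$ that dominates this model and for which the induced map $f':X'\dashrightarrow Z'$ is a morphism. This is possible by resolving the graph of $X\dashrightarrow Z'$. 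Descent is preserved under further blow-ups on both sides, so $f':X'\to Z'$ is an excellent model.

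For part (2), assume $(X,B+\mathbf{M}_X)$ is sub-gklt. We start from an excellent model $f':X'\to Z'$ as in (1), and then modify it in three steps, each time checking that descent is preserved.

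\textbf{Step 1 (make $f'$ flat in codimension one over the support).} We apply a flattening/toroidal-type reduction: after replacing $Z'$ by a higher birational model and $X'$ by the normalized main component of the fibre product, followed by a resolution, we may assume the following. $Z'$ is smooth, and there is an snc divisor $\Sigma_{Z'}$ such that every $f'$-vertical prime divisor of $X'$ whose image has codimension $\geq 2$ is exceptional over the flattening. We then keep track of which divisors are $f'$-exceptional. We do this in the style of the standard proof of \cite[Theorem 0.2]{Amb04} (or \cite[Theorem 2.23]{JLX22}), where one passes to a model on which every vertical divisor maps onto a divisor.

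\textbf{Step 2 (log smooth terminal).} We apply Proposition \ref{existence-of-very-log-resolution} to the sub-klt sub-pair $(X',B'+\mathbf{M}_{X'})$. Since $\mathbf{M}$ already descends, this amounts to the sub-pair $(X',B')$, whose generalized discrepancies agree with the usual ones. This yields a further blow-up on which the positive part of the boundary is terminal, log smooth, and has pairwise disjoint components. Because the pair is sub-klt, the coefficients are $<1$, so such a resolution exists by further blowing up strata. Blowing up does not destroy descent of $\mathbf{M}$.

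\textbf{Step 3 (condition (3)).} This is the main obstacle: blowing up in Step 2 may create new components $P$ of $B'$ with positive coefficient whose image in $Z'$ has codimension $\geq2$. To handle this, I would first try iterating: blow up $Z'$ along $f'(P)$ (after making it snc-compatible), and replace $X'$ by a resolution of the main component. Descent of $\overline{\mathbf{M}}$ persists on higher models of $Z'$.

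The key point is that the crepant pull-back boundary coefficient of any divisor that is exceptional over the new $Z'$-model yet still has codimension-$\geq 2$ image must be controlled. Since $(X',B')$ is terminal, exceptional divisors of further blow-ups of $X'$ receive negative coefficients in $B'$ and thus do not count. The horizontal and divisorial-image components are fine. So the only issue is components of $B'^{>0}$ already on $X'$ with small image. Each blow-up of $Z'$ converts at least one such image into a divisor, and the number of components of $B'^{>0}$ is finite. Hence the process terminates, provided we re-apply Step 2 with terminality, which only introduces components of negative coefficient.
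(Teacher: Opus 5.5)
Your proposal has the right key observation. Once $(X',B')$ is log smooth and terminal, every divisor exceptional over $X'$ has log discrepancy $>1$. So on any higher model, the prime divisors with positive coefficient are exactly the strict transforms of the finitely many components of $(B')^{>0}$. The gap is the termination argument in Step 3: the claim that ``each blow-up of $Z'$ converts at least one such image into a divisor'' is false.

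Take $Z'=\bP^2$. Let $S\to\bP^2$ be the blow-up of a point $o$ followed by the blow-up of a point $p$ on the first exceptional curve, and let $E_2$ be the second exceptional curve. Put $X'=S\times\bP^1$ and $P=E_2\times\bP^1$, so $f'(P)=o$. Since $X'\to\bP^2$ already factors through $\mathrm{Bl}_o\bP^2$, the main component of $X'\times_{\bP^2}\mathrm{Bl}_o\bP^2$ is $X'$ itself. Then $P$ maps to the point $p$, which still has codimension two. Resolving or re-terminalizing changes nothing here. So no invariant visibly drops, and finiteness of the number of components of $(B')^{>0}$ does not give termination.

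The fix is to reorder your steps, and this is exactly the paper's argument.
\begin{enumerate}
\item Pass to a log smooth terminal crepant model on which $\mathbf{M}$ descends (your Step 2).
\item Flatten once: by \cite{H75,Vil06}, choose a birational $Z''\to Z'$ such that the main component of $X'\times_{Z'}Z''$ is flat over $Z''$. Flatness gives equidimensional fibres, so the strict transform of every vertical component of $(B')^{>0}$ maps onto a prime divisor, all of them simultaneously.
\item Pass to a higher log smooth model dominating this.
\end{enumerate}
Descent of $\mathbf{M}$ and $\overline{\mathbf{M}}$ survives, and by your terminality observation no new positive-coefficient divisors appear. Hence conditions (2) and (3) hold, and your Step 1 (flattening before terminalizing) becomes unnecessary.

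Alternatively, you could keep the iterative blow-ups, but you would then need a different termination mechanism. You would show that $\mathrm{ord}_P|_{K(Z)}$ is a divisorial valuation, then invoke Zariski's lemma: repeatedly blowing up the centre of a divisorial valuation makes that centre a divisor after finitely many steps. That termination works valuation by valuation, not by ``one image per blow-up'' as you state.
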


    \begin{proof}
        The statement (1) follows by Lemma \ref{Existence-for-generalized}. \par
        Next, we prove the statement (2). By Proposition \ref{existence-of-very-log-resolution}, we can find a crepant model $(X',B' + \mathbf{M}_{X'})$ such that $(X',B')$ is log smooth and terminal so that there are only finitely many prime divisors over $X'$ with log discrepancy less than one. By \cite{H75} or \cite[Theorem 3.3]{Vil06}, we can take a birational model $\sigma:Z' \to Z$ that flattens these finitely many divisors. Finally, we replace $(X',B' + \mathbf{M}_{X'})$ with a higher log smooth model such that $f' : X' \dashrightarrow Z'$ is a morphism. Then, $f': X' \to Z'$ is a strong excellent model.
    \end{proof}
    
    Next, we recall a property about singularities while doing adjunction for fiber spaces. \par

    \begin{lemma}\label{klt-property-for-adjunction}

        Let $f:(X,B + \mathbf{M}_X) \to Z$ be a generalized klt-trivial fibration. We do adjunction to get a generalized sub-pair $(Z,B_Z + \overline{\mathbf{M}}_Z)$. Then, $(X,B + \mathbf{M}_X)$ is generalized sub-klt if $(Z,B_{Z} + \overline{\mathbf{M}}_{Z})$ is generalized sub-klt.
      
    \end{lemma}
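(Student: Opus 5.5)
The plan is to reduce the statement to a comparison of log discrepancies on a model where everything descends, namely a strong excellent model. By Lemma \ref{Existence-Strong-Excellent}(1) we first pass to an excellent model $f': X' \to Z'$ with $\sigma: Z'\to Z$ and $\pi: X'\to X$, so that $\mathbf{M}$ descends to $X'$ and $\overline{\mathbf{M}}$ descends to $Z'$. Since being generalized sub-klt is a property of the b-divisor $\mathbf{A}$, it suffices to check that every prime divisor $D$ over $X$ has $a(D,X,B+\mathbf{M}_X)>0$. The sub-klt condition on $(Z,B_Z+\overline{\mathbf{M}}_Z)$ is likewise inherited by its crepant pull-back $(Z',B_{Z'}+\overline{\mathbf{M}}_{Z'})$, because the discriminant is compatible with birational base change by Construction \ref{Adjuntion-for fiberspace}. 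So we may work on $X'\to Z'$ and even replace them by higher models as needed.

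Next, take a prime divisor $D$ over $X$. After blowing up further on $X'$, and correspondingly on $Z'$ by flattening \cite{H75,Vil06} while keeping the descending property, we may assume $D$ is a divisor on $X'$. Then we split into cases. If $D$ is horizontal over $Z'$, its log discrepancy is computed over the generic point of $Z$, and condition (1) in the definition of generalized klt-trivial fibration gives $a(D)>0$ directly. If $D$ is vertical, we arrange (again by flattening) that $P:=f'(D)$ is a prime divisor on $Z'$. By definition of $t_P$, the pair $(X',B'+t_Pf'^*P+\mathbf{M}_{X'})$ is sub-lc over $\eta_P$, so
$$a(D,X',B')-t_P\,\mu_D(f'^*P)\geq 0.$$
Hence $a(D,X',B') \geq t_P\,\mu_D f'^*P>0$, since $t_P=1-\mu_P B_{Z'}>0$ by the sub-klt assumption on $Z'$ and $\mu_D f'^*P\geq 1$. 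Here we use that $\mathbf{M}$ descends, so $\mathbf{M}_{X'}$ contributes nothing to discrepancies on higher models, and the log discrepancy of $D$ with respect to $(X,B+\mathbf{M}_X)$ equals $1-\mu_D B'$.

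The step I expect to be the main obstacle is making the base change compatible: we have to ensure every vertical $D$ maps onto a divisor of some model $Z'$ on which the b-discriminant is still sub-klt. The point is that the sub-klt hypothesis on $(Z,B_Z+\overline{\mathbf{M}}_Z)$ has to be read as a statement about the b-divisor, i.e. about all models, and we must check that $B_{Z'}$ defined by adjunction from $X'$ coincides with the crepant pull-back trace. The latter is exactly the descent of $\overline{\mathbf{M}}$ from Lemma \ref{Existence-for-generalized}, since $K_{Z'}+B_{Z'}+\overline{\mathbf{M}}_{Z'}=\sigma^*(K_Z+B_Z+\overline{\mathbf{M}}_Z)$ once $\overline{\mathbf{M}}$ descends to $Z'$. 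With that identification, every vertical divisor over $Z$ becomes a divisor on a suitable $Z'$ and has coefficient $<1$ in $B_{Z'}$, which concludes the argument.
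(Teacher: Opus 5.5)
Your proof is correct and is essentially the paper's argument run directly instead of by contradiction. The paper assumes some $D$ has $\mu_D(B_{X'})\geq 1$ on a model where $f'(D)$ is a divisor and $\overline{\mathbf{M}}$ descends, and from the definition of $t_P$ concludes $\mu_{f'(D)}(B_{Z'})\geq 1$; that is exactly your inequality $a(D)\geq t_P\,\mu_D f'^*P$. Your explicit use of condition (1) for horizontal $D$ covers a case the paper passes over silently. You only need part (1) of Lemma \ref{Existence-Strong-Excellent}, since part (2) presupposes that $(X,B+\mathbf{M}_X)$ is sub-gklt, which is what you are proving; so drop the phrase ``strong excellent model'' from your opening.
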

    \begin{proof}
        The spirit of proof is similar to \cite[Theorem 2.11]{CL24}. 
        Suppose that $(X,B + \mathbf{M}_X)$ is not generalized sub-klt, then there is a prime divisor $D$ over $X$ such that $a(D,X,B +\mathbf{M}_X) \leq 0$. Take a birational model $X' \to X$ such that $D$ is a divisor on $X'$ and then take a birational model $Z' \to Z$ such that the image of $D$ on $Z'$ is a divisor and that $\overline{\mathbf{M}}$ descends to $Z'$. Finally, we replace $X'$ with a higher log smooth model so we can assume there is an induced morphism $f': X' \to Z'$ and that $\mathbf{M}$ descends to $X'$. \par
        Let $(X',B' + \mathbf{M}_{X'})$ be the crepant pullback of $(X, B + \mathbf{M}_X)$, and write 
        $$K_{X'} + B_{X'} + \mathbf{M}_{X'} \sim_{\bQ} f'^*(K_{Z'} + B_{Z'} + \overline{\mathbf{M}}_{Z'}).$$
        Since $a(D,X,B + \mathbf{M}_X) \leq 0$, $\mu_D(B_{X'}) \geq 1$. By the definition of discriminant part, we see that $\mu_{f'(D)}(B_{Z'}) \geq 1$, which means that $(Z,B_Z + \overline{\mathbf{M}}_Z)$ is not generalized sub-klt, a contradiction.
    \end{proof}

\subsection{Boundedness properties for stable minimal models.}

    We recall the boundedness property of the base of a stable minimal model.

    \begin{thm}\label{bounded-base-for-smm}
        Let $d \in \bN$, $\Phi \subseteq[0,1]$ be a DCC set, $u,v,\epsilon>0$ be real numbers. Then there exist positive numbers $r,\eta$ that depend only on $d,\Phi,u,v,\epsilon$ satisfying the following. Assume that 
        \begin{itemize}
            \item $(X,B),A$ is a klt $(d,\Phi,{\leq}u,{\leq}v)$-stable minimal model with $(X,B)$ being $\epsilon$-lc; 
            \item $h:X \to Z$ is the contraction determined by the semi-ample divisor $K_X + B$, and we write
            $$K_X + B \sim_{\bR} h^*(K_Z + B_Z + M_Z)$$
            and get a generalized pair $(Z,B_Z + \overline{\mathbf{M}}_Z)$. 
        \end{itemize}
        Then there exists a very ample divisor $A_Z$ on $Z$ with ${A_Z}^{\dim Z} \leq r$ such that $A_Z - B_Z$, $A_Z - M_Z$ are pseudo-effective and that $(Z,B_Z + \overline{\mathbf{M}}_Z)$ is $\eta$-glc.

    \end{thm}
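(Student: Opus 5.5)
The plan is to reduce the statement to known boundedness results for generalized pairs of log general type, applied to the base $(Z,B_Z+\overline{\mathbf{M}}_Z)$, following \cite{Bir21b,Zhu25}. First I record the structure. The morphism $h:(X,B)\to Z$ is a klt-trivial fibration, and adjunction (Construction \ref{Adjuntion-for fiberspace}, Lemma \ref{Existence-for-generalized}) gives a generalized pair $(Z,B_Z+\overline{\mathbf{M}}_Z)$. Since $K_X+B$ is semi-ample and $h$ is its contraction, $K_Z+B_Z+M_Z$ is ample on $Z$. Moreover
$$\vol(K_Z+B_Z+M_Z)=\ivol(K_X+B)\le v.$$

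\textbf{Step 1: control the adjunction data.} A general fiber $(F,B|_F)$ is an $\epsilon$-lc pair of dimension $\le d$ with $K_F+B|_F\equiv 0$ and coefficients in the DCC set $\Phi$. It also carries the integral divisor $A|_F$, which is big and semi-ample with $\vol(A|_F)\le u$. By Birkar's boundedness of polarized $\epsilon$-lc Calabi--Yau pairs, such fibers form a log bounded family. Two consequences follow, as in \cite{Bir21b} and \cite{Zhu25}.
\begin{itemize}
\item[(a)] The coefficients of $B|_F$ lie in a finite subset of $\Phi$. Hence there is a fixed $p$ with $p(K_F+B|_F)\sim 0$, and consequently $p\overline{\mathbf{M}}$ is b-Cartier, since the moduli part has bounded Cartier index.
\item[(b)] The coefficients of $B_Z$ lie in a DCC set $\Psi$ depending only on $d,\Phi,u,\epsilon$, obtained via lc thresholds of bounded families.
\end{itemize}

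\textbf{Step 2: boundedness of $Z$.} Now $(Z,B_Z+\overline{\mathbf{M}}_Z)$ is a generalized klt pair (Lemma \ref{klt-property-for-adjunction}, read in the appropriate direction via \cite{Amb04}). It has coefficients in $\Psi$, nef part with $p\overline{\mathbf{M}}$ b-Cartier, $K_Z+B_Z+M_Z$ ample, and volume $\le v$. Birkar's boundedness of generalized lc models with bounded volume and DCC data (from \cite{Bir21b}) then shows that $(Z,\operatorname{Supp}B_Z)$ is log bounded. It also gives a bounded $l$ such that $l(K_Z+B_Z+M_Z)$ is very ample with bounded top self-intersection.

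I then take $A_Z$ to be a fixed multiple of a very ample divisor in this bounded family, large enough that $A_Z-B_Z$ is pseudo-effective. This is possible because $B_Z\le \operatorname{Supp}B_Z$ and that support has bounded degree. For $M_Z$, I write
$$M_Z\sim_{\bR}(K_Z+B_Z+M_Z)-K_Z-B_Z.$$
In a bounded family, $K_Z$ plus a bounded multiple of the polarization is big, so enlarging $A_Z$ by a bounded factor makes $A_Z-M_Z$ pseudo-effective. All of these multiples are uniform, which gives the bound $A_Z^{\dim Z}\le r$.

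\textbf{Step 3: the uniform $\eta$-glc bound.} This is the main obstacle. The $\epsilon$-lc condition on $(X,B)$ has to be transferred to a uniform generalized log discrepancy bound on the base, including over codimension $\ge 2$ points of $Z$, where the discriminant is not controlled by Step 1(b).

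My approach is to use the log boundedness of the general fibers from Step 1. With it, I would reduce, after a bounded base change and a toroidal/semistable reduction (Abramovich--Karu), to a situation where the discrepancies of $(X,B)$ over a divisor $E$ over $Z$ can be compared with the generalized log discrepancy $a(E,Z,B_Z+\overline{\mathbf{M}}_Z)$ up to a factor bounded in terms of the fiber family. This is the argument in \cite{Bir21b} for Fano-type and Calabi--Yau fibrations, adapted in \cite{Zhu25} to stable minimal models. The resulting $\eta$ depends only on $d,\Phi,u,\epsilon$.

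If the direct comparison is hard to set up, I would first try instead to invoke the relevant theorem of \cite{Bir21b} on singularities of bases of $\epsilon$-lc klt-trivial fibrations with bounded general fibers. After that, the only thing left to check is that the stable minimal model data satisfy its hypotheses.
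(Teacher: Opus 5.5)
Your proposal is essentially the paper's proof. It has the same ingredients: DCC coefficients of $B_Z$ from ACC for lc thresholds; control of the moduli part; the bound $\vol(K_Z+B_Z+M_Z)=\ivol(K_X+B)\le v$; log boundedness of the base; and then the choice of $A_Z$. For the moduli part the paper uses \cite[Lemma 3.3]{Zhu25}, i.e.\ $\overline{\mathbf{M}}_{Z'}=\sum m_iM_{i,Z'}$ with $m_i$ in a finite set and $M_{i,Z'}$ nef Cartier, which is the right replacement for your ``$p(K_F+B|_F)\sim 0$'' when $\Phi$ contains irrational numbers.

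For the $\eta$-glc bound the paper does exactly your fallback, citing Birkar's result on bases of polarised $\epsilon$-lc klt-trivial fibrations, \cite[Theorem 1.9]{bir23b} (not \cite{Bir21b}). It does this first, and then feeds $\eta$-glc, together with the coefficient and moduli-part control, into the proof of \cite[Lemma 6.8]{BH24}. So move your Step 3 ahead of Step 2 and drop the semistable-reduction sketch.
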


    \begin{proof}
        The proof is very similar to the proof of \cite[Theorem 1.3]{BH24}. For the convenience of readers, we give a whole proof here. \par 
        By \cite[Theorem 1.9]{bir23b}, there is a positive number $\eta$ depending only on $d,u,\epsilon$ such that $(Z,B_Z + \overline{\mathbf{M}}_Z)$ is $\eta$-glc. By \cite[Lemma 3.3]{Zhu25}, there exists a finite set $I$, such that we can decompose
        $$\overline{\mathbf{M}}_{Z'} = \sum_{i=1}^n m_iM_{i,Z'},$$
        where $M_{i,Z'}$ are all nef Cartier divisors on a higher birational model $Z'$ and $m_i \in I$. By ACC for log canonical thresholds (see \cite[Theorem 1.1]{HMX14}), there is a DCC set $\Psi$ that depends on $d,\Phi$, such that $B_Z \in \Psi$.
        
        Also, we see that 
        $$\vol(K_Z + B_Z + \overline{\mathbf{M}}_Z) = \ivol(K_X + B) \leq v.$$
        Thus, $(Z,B_Z + \overline{\mathbf{M}}_Z)$ is log bounded by the proof of \cite[Lemma 6.8]{BH24}. \par 
        Therefore, there exists a uniform $r$, and a very ample divisor $A_Z$ on $Z$ with ${A_Z}^{\dim Z} \leq r$ such that $A_Z-K_Z$, $A_Z - B_Z$, and $A_Z - \overline{\mathbf{M}}_Z$ are all pseudo-effective.
    \end{proof}

    The next lemma concerns boundedness properties of singularities for stable minimal models with a fixed Iitaka volume.

    \begin{lemma}\label{DCC-fixed-volume-imply-epsilon-lc}{\rm (\cite[Theorem 1.6]{Zhu25})}
        Let $d > 0$ be an integer, $\Phi \subseteq[0,1]$ be a DCC set, $u,v> 0$ be real numbers. Then, there exists a real number $\epsilon > 0$, such that the following property holds. Assume that $(X,B),A$ is a klt $(d,\Phi,{\leq}u,{=}v)$-stable minimal model. Then, $(X,B)$ is $\epsilon$-lc.
    \end{lemma}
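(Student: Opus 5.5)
The statement to prove is Lemma~\ref{DCC-fixed-volume-imply-epsilon-lc}: there is a uniform $\epsilon>0$ such that every klt $(d,\Phi,{\leq}u,{=}v)$-stable minimal model $(X,B),A$ has $(X,B)$ $\epsilon$-lc. I would argue by contradiction, using boundedness of the family. Suppose there is a sequence of klt $(d,\Phi,{\leq}u,{=}v)$-stable minimal models $(X_i,B_i),A_i$ together with prime divisors $E_i$ over $X_i$ whose log discrepancies satisfy $a(E_i,X_i,B_i)=\epsilon_i\to 0$. The aim is to show that these pairs lie in a bounded family with coefficients in a finite set. On such a family the minimal log discrepancy of klt members is bounded below by a positive constant, which gives the contradiction.

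\textbf{Step 1: boundedness of the bases.} Let $h_i\colon X_i\to Z_i$ be the contraction defined by $K_{X_i}+B_i$, and let $(Z_i,B_{Z_i}+\overline{\mathbf M}_{Z_i})$ be the induced generalized pair from Construction~\ref{Adjuntion-for fiberspace}. Then $K_{Z_i}+B_{Z_i}+\overline{\mathbf M}_{Z_i}$ is ample, and its volume equals $\ivol(K_{X_i}+B_i)=v$, which is fixed. The general fibers $F_i$ are klt Calabi--Yau pairs $(F_i,B_i|_{F_i})$ with coefficients in $\Phi$, carrying a big semi-ample integral divisor $A_i|_{F_i}$ of volume at most $u$. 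These fibers are log bounded, and their coefficients lie in a finite subset of $\Phi$ by global ACC. Hence the moduli part has bounded Cartier index (as in \cite[Lemma 3.3]{Zhu25}) and the discriminant coefficients lie in a DCC set (by \cite{HMX14}). Since the volume is fixed, one can apply the ACC for volumes of generalized log canonical pairs, or the DCC combined with the fixed value $v$, together with the arguments of \cite{BH24,Bir21b}. This shows that the $(Z_i,B_{Z_i}+\overline{\mathbf M}_{Z_i})$ are log bounded and that the coefficients of $B_{Z_i}$ lie in a \emph{finite} set. The fixed volume is exactly what upgrades DCC to finiteness.

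\textbf{Step 2: lifting to $X_i$.} With the base log bounded and the fibers bounded, I would invoke Birkar's boundedness of stable minimal models over bounded bases (\cite{bir23b}), where the polarization is $A_i+lh_i^*A_{Z_i}$ for a uniform $l$. This shows that the $(X_i,\operatorname{Supp}B_i)$ form a log bounded family. Because the coefficients lie in a finite set, the pairs $(X_i,B_i)$ lie in finitely many bounded families. Mls on each family are constructible and positive on the klt locus, so there is a uniform $\epsilon$, contradicting $\epsilon_i\to0$.

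\textbf{Main obstacle.} The hard step is showing that the coefficients of $B_i$ (and of $B_{Z_i}$) lie in a finite set. A DCC set alone would allow accumulation and make $\epsilon_i\to0$ possible. My first attempt would be the following: if infinitely many coefficients were strictly increasing, then the volumes $\vol(K_{Z_i}+B_{Z_i}+\overline{\mathbf M}_{Z_i})$ would strictly increase along a subsequence in a bounded family, contradicting that they all equal $v$. Carrying this out requires controlling horizontal coefficients through the fibers (via global ACC) and vertical ones through the discriminant (via ACC for lc thresholds).
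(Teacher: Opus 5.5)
The paper gives no proof of this lemma; it simply quotes \cite[Theorem 1.6]{Zhu25}. Your outline has a genuine gap at Step~2. There you pass from log boundedness of the bases and of the general fibres to log boundedness of $(X_i,\operatorname{Supp}B_i)$ by appealing to Birkar's boundedness results. Those results carry a singularity hypothesis that you do not have. One version requires $(X,B)$ to be $\epsilon$-lc; compare Theorem~\ref{bounded-base-for-smm}, where even the input on the base is only available under that assumption. Since $\epsilon$-lc is exactly the conclusion sought, this makes the step circular. The other version requires the polarisation to be effective and ample over $Z$ with $(X,B+tA)$ lc. Here $A_i$ is merely an integral divisor that is big and semi-ample over $Z_i$, and nothing in your outline produces such a polarisation with controlled fibre volume. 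Even the volume of $A_i+lh_i^*A_{Z_i}$ is unbounded, since $A_i$ can be replaced by $A_i+h_i^*H$ for any ample Cartier $H$ on $Z_i$ without changing the data.

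The underlying issue is that the base and the general fibre do not see how singular $X_i$ is along the special fibres of $h_i$. Extracting that control from the fixed value $v$ is the whole content of the lemma. For the same reason, the ``main obstacle'' you identify is not the real one. Finiteness of the coefficients of $B_i$ and $B_{Z_i}$ bounds, via the discriminant, the log discrepancies of divisors over $X_i$ whose image on $Z_i$ is a divisor. It does not bound those of exceptional divisors dominating $Z_i$, which are governed by the singularities of the general fibre. Nor does it bound those whose centre on $Z_i$ has codimension $\ge 2$.

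A sound way to organise the proof avoids bounding $X_i$ altogether. Let $D$ be a prime divisor over $X$ that is vertical over $Z$. Pass to $X'\to Z'$ on which $D$ is a divisor, its image $P$ is a prime divisor, and $\overline{\mathbf M}$ descends. With $(X',B')$ the crepant pull-back and $t_P$ the lc threshold of $h'^*P$ over $\eta_P$, you get
\[
a(D,X,B)\ \ge\ t_P\cdot\mult_D(h'^*P)\ \ge\ t_P\ =\ a(P,Z,B_Z+\overline{\mathbf M}_Z).
\]
If instead $D$ is horizontal, $a(D,X,B)$ is a log discrepancy of $(F,B|_F)$ for a general fibre $F$. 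So the lemma is equivalent to two statements; the converse direction follows by restriction to fibres and \cite[Theorem 1.9]{bir23b}.
\begin{itemize}
\item[(a)] The polarised klt log Calabi--Yau fibres $(F,B|_F),A|_F$ are uniformly $\epsilon_1$-lc.
\item[(b)] The bases $(Z,B_Z+\overline{\mathbf M}_Z)$ are uniformly $\epsilon_2$-glc.
\end{itemize}
Statement (a) is the case $Z=\mathrm{pt}$ of the lemma and needs its own input from the theory of polarised Calabi--Yau pairs. Your assertion that these fibres are log bounded is at least as strong as (a), so it cannot be taken for granted either.

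Statement (b) is where $v$ must be used, and your Step~1 does not yet give it. Generalised log discrepancies on higher models also involve $\overline{\mathbf M}_{Z'}$, not just the coefficients of $B_Z$. Moreover, the ``volume strictly increases with the coefficients'' mechanism says nothing about exceptional divisors, because their coefficients do not change the volume.
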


\subsection{Adjunction for lc centers.}\label{adjuntion-for-lc-centers}

    We recall some basic properties for pure and exceptional lc centers. \par
    \begin{lemma}\label{basic-for-adjunction-lc-centers}
        Let $(X,\Delta)$ be a pair with a pure and exceptional lc center $G \subseteq X$ with unique lc place $S$. Denote by $\nu : F \to G$ the normalization. Let $f : Y \to X$ be a log resolution of $(X,\Delta)$ such that $S$ is a divisor on $Y$. Write $K_Y + \Gamma = f^*(K_X + \Delta)$ and define $\Gamma_S := (\Gamma - S)|_S$. Denote by $g: S \to F$ the unique morphism induced by the restriction map of $f$ to $S$. Then, we have
        \begin{itemize}
            \item[(1)]  $g:(S, \Gamma_S) \to F$ is a klt-trivial fibration;
            \item[(2)] If we do adjuntion for $g:(S, \Gamma_S) \to F$ and write 
            $$K_S + \Gamma_S \sim_{\bR}g^*(K_F + B_F + M_F),$$
            where $B_F$ is the discriminant part and $M_F$ is the moduli part, then $B_F \geq 0$.
        \end{itemize}
    \end{lemma}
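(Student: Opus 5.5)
The plan is to work on the log resolution $f:Y\to X$, restrict to the unique lc place $S$, and derive both (1) and (2) from a single vanishing statement. That statement is the surjectivity of the restriction map
$$f_*\OO_Y(\roundup{-\Gamma}+S)\to g_*\OO_S(\roundup{-\Gamma_S}),$$
together with the identification of its source with $\OO_X$ near the relevant points. I will identify $F$ with the normalization of $G$ throughout and localize freely over a neighbourhood of the generic point of $G$, or of the generic point of a prime divisor of $F$.

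\textbf{Step 1: triviality and sub-klt over the generic point.} By adjunction on the smooth divisor $S$,
$$K_S+\Gamma_S=(K_Y+\Gamma)|_S=\big(f^*(K_X+\Delta)\big)|_S .$$
The right-hand side is pulled back from $G$, hence from $F$, so $K_S+\Gamma_S\sim_{\bR,g}0$.

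Next I show that $(S,\Gamma_S)$ is sub-klt over the generic point of $F$. Since $G$ is pure, every coefficient of $\Gamma$ is at most $1$ over the generic point $\eta_G$. Since $G$ is exceptional, $S$ is the only component of $\Gamma$ with coefficient $1$ whose center is $G$. Suppose another component $D$ with coefficient $1$ met $S$ along a stratum dominating $G$. Blowing up $S\cap D$ would then produce a second lc place over $G$, which contradicts exceptionality. Hence over $\eta_G$ the divisor $\Gamma_S=(\Gamma-S)|_S$ has snc support and all coefficients $<1$, so it is sub-klt there.

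\textbf{Step 2: the key surjectivity, and the rank-one condition.} Write
$$\roundup{-\Gamma}+S-S=\roundup{-\Gamma}=K_Y-f^*(K_X+\Delta)+\fraction{\Gamma}'$$
for the appropriate fractional snc divisor $\fraction{\Gamma}'$ with coefficients in $[0,1)$ (after first removing $S$). Relative Kawamata--Viehweg vanishing then gives $R^1f_*\OO_Y(\roundup{-\Gamma})=0$. The long exact sequence of
$$0\to\OO_Y(\roundup{-\Gamma})\to\OO_Y(\roundup{-\Gamma}+S)\to\OO_S(\roundup{-\Gamma_S})\to0$$
then yields the surjectivity displayed above. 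Near $\eta_G$ I check three things:
\begin{itemize}
\item Because $\Delta\geq0$, every component of $\Gamma$ with negative coefficient is $f$-exceptional.
\item Every non-exceptional component of $\Gamma$ has coefficient $\leq1$, with equality only for $S$, which is exceptional over $G$ or else is a component of $\Delta$.
\item Consequently $\roundup{-\Gamma}+S$ is effective and $f$-exceptional near $\eta_G$.
\end{itemize}
Therefore $f_*\OO_Y(\roundup{-\Gamma}+S)=\OO_X$ there. This is the Kollár--Shokurov connectedness argument. It follows that $g_*\OO_S(\roundup{-\Gamma_S})$ is the image of $\OO_X$, so it has rank one; this proves (1). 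Moreover, at the generic point of any prime divisor $P\subset F$, this pushforward is a quotient of $\OO_X$. Passing to the normalization $F$, I expect to get $g_*\OO_S(\roundup{-\Gamma_S})\subseteq\OO_F$ near $\eta_P$. The point here is that the image of $\OO_{X}$ consists of regular functions on $G$, hence on $F$.

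\textbf{Step 3: $B_F\ge0$.} Suppose that for some prime divisor $P$ on $F$ the coefficient of $B_F$ is negative, that is, $t_P>1$. Then $(S,\Gamma_S+g^*P)$ is sub-klt over $\eta_P$, after passing to a higher model on which $g^{-1}P$ is snc together with $\Gamma_S$; this is harmless because the sheaf $g_*\OO_S(\roundup{-\Gamma_S})$ is independent of the model. Sub-klt means every coefficient of $\Gamma_S+g^*P$ over $\eta_P$ is $<1$. Hence $\roundup{-\Gamma_S-g^*P}\geq0$ over $\eta_P$, that is, $\roundup{-\Gamma_S}\geq g^*P$ there. This gives
$$\OO_F(P)\subseteq g_*\OO_S(\roundup{-\Gamma_S})$$
near $\eta_P$, which contradicts the inclusion into $\OO_F$ from Step 2. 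So $t_P\le1$ for every $P$, and $B_F\geq0$.

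\textbf{Main obstacle.} The delicate part is Step 2 at the generic point of a codimension-one point $P$ of $F$, rather than only at $\eta_G$. Purity and exceptionality are only assumed generically along $G$, so over $\eta_P$ there may be other components of $\Gamma$ with coefficient $\ge1$ meeting $S$. I would handle this by proving the vanishing only relative to a neighbourhood of $\eta_P$. The key observation is that any such component has negative coefficient in $\roundup{-\Gamma}+S$, so its presence only makes the source of the restriction map smaller. The identification $f_*\OO_Y(\roundup{-\Gamma}+S)\subseteq\OO_X$ therefore persists, and this inclusion is all that Step 3 uses. I would also check that the normalization $\nu$ causes no trouble: the image of $\OO_X$ lands in $\nu_*\OO_F$.
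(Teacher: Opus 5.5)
Your proof is correct, and it is essentially the argument behind the paper's proof, which consists only of a citation to Lemmas 4.1 and 4.2(3) of \cite{Amb99}. The mechanism is the one you use: relative Kawamata--Viehweg vanishing $R^1f_*\OO_Y(\roundup{-\Gamma})=0$ makes every section of $g_*\OO_S(\roundup{-\Gamma_S})$ a restriction of a regular function on $X$, hence an element of $\OO_F$; this gives rank one, and it rules out $\OO_F(P)\subseteq g_*\OO_S(\roundup{-\Gamma_S})$ when $t_P>1$, so $B_F\ge 0$.

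Two small remarks. The concern in your last paragraph is unnecessary: the vanishing and the inclusion $f_*\OO_Y(\roundup{-\Gamma}+S)\subseteq\OO_X$ both hold globally on $X$, because the positive part of $\roundup{-\Gamma}+S$ is $f$-exceptional when $\Delta\ge 0$. You should also state explicitly that rank one, together with $\roundup{-\Gamma_S}\ge 0$ over the generic point of $F$, gives $g_*\OO_S=\OO_F$; that is, $g$ is a contraction, which the definition of a klt-trivial fibration requires.
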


    \begin{proof}
        See \cite[Lemma 4.1, Lemma 4.2(3)]{Amb99} or the last part of \cite[Section 2.7]{CL24}.
    \end{proof}

\subsection{Koll\'ar's Injectivity Lemma.}
    We need a general version of the Kawamata-Viehweg vanishing theorem; to handle semi-ample but not big divisors we need to use Kollár’s Injectivity Lemma.
    \begin{thm}{\rm (\label{kollars-injectivity}\cite[Lemma 7.3]{KMM94})}
        Let $(V,\Delta)$ be a projective klt pair, and $L$ be an integral $\bQ$-Cartier divisor such that $L - (K_V + \Delta)$ is semi-ample. Assume that $D,E$ are effective $\bQ$-Cartier integral divisors with $D + E \in |m(L - (K_V + \Delta))|$ for some $m \in \bN$. \par 
        Denote by 
        $$\varphi_D: \mathcal{O}_V(L) \to \mathcal{O}_{V}(L + D)$$
        the injective map of sheaves induced by $D$. Then, 
        for any integer $i \geq 0$, the induced map on $i$-th cohomology
        $$H^i(V,\mathcal{O}_V(L) ) \to H^i(V,\mathcal{O}_V(L+D))$$
        is injective.  
    \end{thm}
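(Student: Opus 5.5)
Although this statement is quoted from \cite[Lemma 7.3]{KMM94}, I would prove it by the usual Hodge-theoretic route: reduce to a log smooth situation, then obtain injectivity from $E_1$-degeneration of a logarithmic de Rham complex twisted by a unitary rank-one local system, realised by a cyclic cover.

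\emph{Step 1: reduction to the log smooth case.} Put $N := L-(K_V+\Delta)$, which is semi-ample. Take a log resolution $\mu: Y \to V$ of $(V,\Delta+D+E)$ and write
$$K_Y+\Delta_Y=\mu^*(K_V+\Delta)+P,$$
where $\Delta_Y\ge 0$ has simple normal crossing support and coefficients $<1$ (klt), and $P\ge 0$ is $\mu$-exceptional. Set $L_Y:=\mu^*L+\roundup{P}$; we may arrange the rounding so that $L_Y-(K_Y+\Delta'_Y)\sim_{\bQ}\mu^*N$ for some boundary $\Delta'_Y$ with simple normal crossing support and coefficients in $[0,1)$. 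Then $\mu_*\OO_Y(L_Y)=\OO_V(L)$, and $R^j\mu_*\OO_Y(L_Y)=0$ for $j>0$ by relative Kawamata--Viehweg vanishing, since $\mu^*N$ is $\mu$-trivial. The same holds after adding $\mu^*D$. By the Leray spectral sequence the map on $V$ is therefore identified with $H^i(Y,\OO_Y(L_Y))\to H^i(Y,\OO_Y(L_Y+\mu^*D))$. So we may assume $V$ is smooth, $\Delta+D+E$ has simple normal crossing support, $\rounddown{\Delta}=0$, and $mN\sim D+E$.

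\emph{Step 2: cyclic cover and Hodge theory.} Since $N$ is semi-ample, choose a general smooth $H\in|kmN|$ for suitable $k$ (Bertini), so everything stays simple normal crossing. Write $L-K_V\sim_{\bQ}\Delta+N$, so $L-K_V$ is $\bQ$-linearly equivalent to a divisor $B$ with simple normal crossing support and fractional coefficients, built from $\Delta$ and $\tfrac1{km}H$ (or $\tfrac1m(D+E)$). The cyclic cover attached to $B$ exhibits $\OO_V(L-K_V)^{-1}$, up to the rounding of $B$, as an eigensheaf of a unitary local system on $V\setminus\mathrm{Supp}\,B$. Deligne's $E_1$-degeneration for the logarithmic de Rham complex (equivalently, Esnault--Viehweg) then shows that $H^j(V,\OO_V(K_V-L))$ is a quotient of the hypercohomology of the local system on $V\setminus \mathrm{Supp}\,B$. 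Hence the restriction-type map $H^j(V,\OO_V(K_V-L-D'))\to H^j(V,\OO_V(K_V-L))$ is surjective for every effective $D'$ supported in $\mathrm{Supp}\,B$. Serre duality turns this into injectivity of $H^i(\OO_V(L))\to H^i(\OO_V(L+D'))$.

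\emph{Step 3: reaching the given $D$; main obstacle.} I expect the hardest step to be passing from $D'$ supported in $\mathrm{Supp}\,B$, with small multiplicities, to the actual $D$, whose multiplicities may be large and whose support may share components with $\Delta$. I would first apply Step 2 with the representative $B=\Delta+\tfrac1m(D+E)$, after absorbing integer parts so the coefficients lie in $[0,1)$. This gives injectivity for $D'=\mathrm{Supp}\,D$ and for divisors $D'$ with $0\le D'\le D$ that are compatible with the fractional rounding. I would then iterate: factor $\varphi_D$ as a composition $L\to L+D_1\to L+D_1+D_2\to\cdots$ of steps with $D_j$ reduced, at each stage re-expressing $L+D_1+\dots+D_j-(K_V+\Delta_j)$ as $\bQ$-linearly equivalent to a multiple of $N$ with a new fractional boundary $\Delta_j$ supported in $\mathrm{Supp}(\Delta+D+E)$. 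This is Kollár's trick of trading multiples of $D+E$ against $mN$. The bookkeeping of coefficients in this iteration, ensuring each intermediate boundary stays klt with simple normal crossing support, is where I expect the care to be needed.
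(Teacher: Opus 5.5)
The paper gives no proof of this statement; it is quoted from \cite[Lemma 7.3]{KMM94}. Your Esnault--Viehweg approach is therefore a reconstruction of the cited result, not an alternative to an argument in the paper.

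\textbf{Steps 1--2.} These are the standard route and are essentially fine, up to rounding. Since $L$ and $D$ are only $\bQ$-Cartier Weil divisors, $\mu^*L+\roundup{P}$ and $L_Y+\mu^*D$ need not be integral. Take instead $L_Y:=\roundup{\mu^*L+P-\Delta_Y}$ and $D_Y:=\roundup{\mu^*(L+D)+P-\Delta_Y}-L_Y$. Then $L_Y$ and $L_Y+D_Y$ push forward to $\OO_V(L)$ and $\OO_V(L+D)$ with vanishing higher direct images, and $D_Y\ge 0$ is supported in $\mathrm{Supp}\,\mu^*D$, which is all you need afterwards.

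\textbf{The gap is Step 3, which rests on a misdiagnosis.} The conclusion of Step 2 has no multiplicity restriction. It holds for every effective $D'$ all of whose components appear in $B$ with coefficient in $(0,1)$: along such components the residues are non-integral, so $\Omega^\bullet(\log B)\otimes\OO_V(K_V-L-D')\hookrightarrow\Omega^\bullet(\log B)\otimes\OO_V(K_V-L)$ is a quasi-isomorphism. The only real problem is the choice of $B$.
\begin{itemize}
\item With $B$ built from $\Delta$ and a general $H$, $D\not\subseteq\mathrm{Supp}\,B$.
\item With $B=\Delta+\frac1m(D+E)$, components of $D$ can receive integral coefficients (e.g.\ $m=1$). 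Absorbing $\rounddown{B}$ then changes the source sheaf to $\OO_V(L-\rounddown{B})$, so you no longer control $\OO_V(L)$.
\end{itemize}

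\textbf{The proposed iteration does not repair this.} Suppose $\Delta_j$ is supported in $\mathrm{Supp}(\Delta+D+E)$ and, as in general, $mN\sim D+E$ is the only available relation among these components. Then $L+D_1+\cdots+D_j-(K_V+\Delta_j)\sim_{\bQ}c_jN$ forces
\[
\Delta_j=\Delta+D_1+\cdots+D_j+\tfrac{1-c_j}{m}(D+E).
\]
If $c_j\le 1$, this has coefficients $\ge 1$ along $D_1$. If $c_j>1$, it is negative along any component of $E$ outside $\mathrm{Supp}(\Delta+D)$. So no klt boundary of the required form exists.

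\textbf{The missing idea is dilution.} You must use $D+E$ and general members of a multiple of $N$ at the same time. Choose $\ell$ such that $|m\ell N|$ is base point free, take a general $H\in|m\ell N|$, and for $k\gg 0$ set
\[
B:=\Delta+\tfrac{1}{km}(D+E)+\tfrac{k-1}{km\ell}H\ \sim_{\bQ}\ \Delta+N\ \sim_{\bQ}\ L-K_V .
\]
By Bertini, $B$ has simple normal crossing support. All its coefficients lie in $(0,1)$, because those of $\Delta$ are $<1$ and the added terms are small or equal to $\tfrac{k-1}{km\ell}<1$. Moreover $\mathrm{Supp}\,D\subseteq\mathrm{Supp}\,B$. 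One application of Step 2 with $D'=D$ (or $D'=D_Y$ after Step 1), followed by Serre duality, then gives the injectivity with no iteration.

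A further simplification: $\varphi_{D+E}=\varphi_E\circ\varphi_D$, so injectivity for $D+E$ already implies injectivity for $D$. Thus $E$ plays no role beyond this factorization.
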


    We need the following criterion for the separation properties of a semi-ample divisor.
    
    \begin{thm}\label{app-for-kol-inj}
        Let $(V,C)$ be a projective klt pair, and $L$ be an integral $\bQ$-Cartier divisor. Assume that 
        \begin{itemize}
            \item $K_V + C$ is semi-ample and induces a contraction $h:V \to Y$;
            \item there is a potentially birational ample $\bQ$-divisor $D$ on $Y$ such that 
            $$L - (K_V + C) \sim_{\bR} h^*D.$$
        \end{itemize}
        Then, $|L|$ separates different general fibers of $h$.
    \end{thm}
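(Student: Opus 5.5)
The plan is to follow the usual Nadel/Kawamata–Viehweg argument for separating points. Since $h$ is not birational, we work fiberwise, and we replace Kawamata–Viehweg vanishing by Kollár's injectivity, Theorem \ref{kollars-injectivity}.

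\emph{Step 1 (setup).} Fix two different general points $y_1,y_2\in Y$ with fibers $F_i=h^{-1}(y_i)$. Since $D$ is potentially birational, after possibly switching $y_1,y_2$ there is a $\bQ$-divisor $0\le \Delta_Y\sim_{\bQ}(1-\epsilon)D$ with the following properties: $(Y,\Delta_Y)$ is lc but not klt at $y_1$, $y_1$ is an isolated lc center, and $(Y,\Delta_Y)$ is not klt at $y_2$. Set $\Delta':=C+h^*\Delta_Y$. Then
$$L-(K_V+\Delta')\sim_{\bR}\epsilon h^*D,$$
which is semi-ample; it is not big unless $h$ is birational.

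\emph{Step 2 (singularities near $F_1$ and $F_2$).} Choose a nonempty open set $U\subseteq Y$ over which $h$ is smooth, $(V,C)$ is klt, and a log resolution of $(V,C)$ restricts to a log resolution on every fiber. Since $y_1,y_2$ are general, we may assume $y_1,y_2\in U$. Over $U$, the pullback $h^*\Delta_Y$ is locally of the form ``(smooth family)$\times\Delta_Y$''. We should then check that $(V,\Delta')$ is lc near $F_1$, that its non-klt locus near $F_1$ is exactly $F_1$, and that $(V,\Delta')$ is not klt along $F_2$. For the first claim, log discrepancies over $h^{-1}(U)$ are computed by a product-type resolution, and the klt-ness of $(V,C)$ there adds a strictly positive contribution horizontally. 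The tie-breaking built into potential birationality keeps $F_1$ an isolated component of the non-klt locus near $y_1$. Let $\mathcal{J}=\mathcal{J}(V,\Delta')$ be the multiplier ideal, and let $Z$ be the subscheme it defines. Near $F_1$ this subscheme is the reduced fiber $F_1$, so $Z=F_1\sqcup Z'$ with $Z'\supseteq F_2$ set-theoretically.

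\emph{Step 3 (surjectivity).} Take a log resolution $\mu:W\to V$ of $(V,\Delta')$ and write $K_W+E=\mu^*(K_V+\Delta')$. Put $L_W:=\mu^*L-\rounddown{E}$. Then
$$L_W-(K_W+\fraction{E})\sim_{\bR}\epsilon\,\mu^*h^*D,$$
which is semi-ample. Applying Theorem \ref{kollars-injectivity} on $(W,\fraction{E})$, with a general member of $|m\epsilon\mu^*h^*D|$ playing the role of $D+E$, together with Kollár's torsion-freeness, we get two facts. First, $R^i\mu_*\mathcal O_W(K_W+\roundup{-E}+\mu^*L)$-type higher direct images vanish, so $H^1(V,\mathcal J\otimes\mathcal O_V(L))=H^1(W,\mathcal O_W(L_W))$. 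Second, every class in this group that is killed by multiplication by a general section of a multiple of the ample divisor pulled back from $Y$ must be zero. An alternative route is to push forward to $Y$ and use Kollár's vanishing $H^1(Y,R^0h_*(\cdots)\otimes\mathcal O_Y(\text{ample}))=0$. Either way the goal is the vanishing $H^1(V,\mathcal J(L))=0$. From it, the map $H^0(V,L)\to H^0(Z,L|_Z)=H^0(F_1,L|_{F_1})\oplus H^0(Z',L|_{Z'})$ is surjective. We then lift the section equal to $(s,0)$ with $s\ne 0$ on $F_1$ and $0$ on $Z'$. The lift does not vanish identically on $F_1$ but vanishes on $F_2$, so $|L|$ separates $F_1$ from $F_2$.

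\emph{Expected obstacles.} The main obstacle is Step 3: Kollár injectivity only gives injectivity and not vanishing, and $\epsilon h^*D$ is not big on $V$. The first thing to try is to exploit that the $H^1$-class is annihilated after adding a general divisor pulled back from $Y$ that misses $Z$. That divisor is in $|m\epsilon h^*D|$ because $D$ is ample, and injectivity then forces the class to vanish. A secondary point is to ensure $H^0(F_1,L|_{F_1})\neq 0$. Since $L|_{F_1}\sim_{\bR}(K_V+C)|_{F_1}\sim_{\bR}0$ and $L$ is integral, I would handle this by noting that the surjectivity in Step 3 at the level of $h_*$ makes $h_*\mathcal O_V(L)$ a nonzero rank-one sheaf at general points. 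Failing that, I would absorb the torsion by working with $h_*\mathcal O_V(L)$ on $Y$ directly.
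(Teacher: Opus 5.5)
Your Steps 1--2 match the paper's setup; the paper phrases it through the generalized pair $(Y,B_Y+\mathbf M_Y)$ and perturbs so that $F_1$ is an exceptional lc center. Step 3, however, has a genuine gap.

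\textbf{The target vanishing is false.} The goal $H^1(V,\mathcal J(L))=0$ fails in general once $h$ is not birational, because $H^1(V,\mathcal J(L))$ picks up $H^0(Y,R^1h_*\mathcal J(L))$. For example, take $V=Y\times A$ with $A$ an elliptic curve and $Y$ a curve of genus $\ge 2$, $C=0$, $L=K_V+h^*D$ with $\deg D\ge3$, and $\Delta_Y=y_1+y_2+(\text{small general part})$. Then $\mathcal J(L)=\OO_V(h^*(K_Y+D-y_1-y_2))$ and
$H^1(V,\mathcal J(L))\supseteq H^0(Y,K_Y+D-y_1-y_2)\otimes H^1(A,\OO_A)\neq0$,
even though the restriction map you want is surjective. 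You only need the connecting map $\partial\colon H^0(Z,L|_Z)\to H^1(V,\mathcal J(L))$ to vanish.

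\textbf{Your mechanism for killing $\partial\sigma$ runs the wrong way.} If the divisor of $s\in H^0(V,\OO_V(h^*H))$ misses $Z$, then $s\cdot\partial\sigma=\partial(s|_Z\sigma)$ with $s|_Z$ a unit, so nothing is gained. The missing idea, which is the paper's key step, is to use a divisor that \emph{contains} the non-klt locus. Since $Z$ is vertical over $Y$ and $D$ is ample, you can take $H\in|m\epsilon D|$ vanishing to high order along $h(Z)$, so that $s\in H^0(V,\mathcal J\otimes\OO_V(h^*H))$. Then $s\cdot\partial\sigma=\partial(0)=0$. Kollár's injectivity, applied on $(W,\fraction{E})$ to this special (not general) member of $|m\epsilon\mu^*h^*D|$, then gives $\partial\sigma=0$. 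The paper does exactly this on $W$: with $G=\rounddown{\Gamma}$ and $M=\roundup{\nu^*L-\Gamma+N}$, it chooses $G'\ge0$ with $G+G'\in|m\nu^*h^*(\delta D)|$. This gives $H^1(W,M)\hookrightarrow H^1(W,M+G)$, hence $H^0(W,M+G)\twoheadrightarrow H^0(G,(M+G)|_G)$.

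Your push-forward alternative can also be repaired, but not as a proof of $H^1(V,\mathcal J(L))=0$. Kollár's vanishing gives $H^1(Y,h_*\mathcal J(L))=0$. Torsion-freeness of $R^1h_*\mathcal J(L)$ then forces $h_*\OO_V(L)\to h_*\OO_Z(L)$ to be surjective: its cokernel injects into $R^1h_*\mathcal J(L)$ but is supported on $h(Z)\neq Y$.

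\textbf{The nonvanishing $H^0(F_1,L|_{F_1})\neq0$ cannot be obtained from the stated hypotheses.} Your plan does not work: surjectivity of $h_*\OO_V(L)\to h_*\OO_Z(L)$ says nothing about these sheaves being nonzero. Indeed, take $V=Y\times S$ with $S$ an Enriques surface, $Y$ a curve with $K_Y$ ample, $C=0$ and $L=K_V+h^*D$. All hypotheses hold, yet $H^0(V,L)=H^0(Y,K_Y+D)\otimes H^0(S,K_S)=0$, so $|L|$ separates nothing.

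The paper's proof passes over the same point by asserting that $(M+G)|_E$ is effective. It is only $\bR$-linearly equivalent to an effective divisor; in this example $(M+G)|_E\cong\OO_S(K_S)$. An extra hypothesis is needed, such as $L|_F\sim0$ for a general fiber $F$. Under it, the repaired Step 3 lifts the section $(1,0)$ of $\OO_Z(L)$ on $F_1\sqcup Z'$, and your argument concludes.
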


    \begin{proof}
        We do adjunction and write
        $$K_V + C \sim_{\bQ}h^*(K_Y + B_Y + M_Y)$$
        to get a generalized klt pair $(Y,B_Y + \mathbf{M}_Y)$ as in Section \ref{adjunction}.
        For two different general points $x,y \in Y$, denote by $F_x$, $F_y$ the corresponding fiber of $h$. Since $D$ is potentially birational and $x,y$ are general, there exists $\delta \in (0,1) \cap \bQ$ and $0 \leq D' \sim_{\bQ} (1 - \delta) D$, such that $x$ is an isolated glc center of $(Y,B_Y + D' + \mathbf{M}_Y)$ with unique glc place, while $(Y,B_Y + D' + \mathbf{M}_Y)$ is not gklt at $y$. Taking $\Delta=h^*D'$, we have $0 \leq \Delta \sim_{\bQ}(1-\delta)h^*D$ and $F_x$ is a pure and exceptional lc center of $(V,C + \Delta)$ with unique lc place $E$, and $(V, C + \Delta)$ is not klt at $F_y$. \par
        Let $\nu: W \to V$ be a log resolution of $(V,C)$ such that $E$ is a divisor on $W$. Write uniquely
        $$\nu^*(K_V + C + \Delta) = K_W + \Gamma - N,$$
        where $\Gamma,N$ are effective with no common components. Let $$M:= \roundup{\nu^*L - \Gamma + N}, \quad G:=\rounddown{\Gamma}.$$ We see that $M \geq \rounddown{\nu^*L} - G$ and that $M + G - \rounddown{\nu^*L}$ is $\nu$-exceptional. Also, we see that 
        \begin{align*}
            M  - (K_W + M - (\nu^*L-\Gamma + N)) &= \nu^*L - (K_W + \Gamma - N) \\
            &=\nu^*h^*(\delta D)
        \end{align*}
        is semi-ample and $(W, M - (\nu^*L-\Gamma + N))$ is klt. Since any support of $G$ is vertical over $Y$ and $D$ is an ample $\bQ$-divisor on $Y$, there exists some integer $m>0$ and another $\bQ$-divisor $G'\geq 0$, such that $G+G' \in |m(\nu^*h^*(\delta D))|$. By Theorem \ref{kollars-injectivity}, we get an injective map of cohomology
        $$H^1(W,M) \to H^1(W,M + G).$$
        As a result, we get a surjective map of global sections 
        $$H^0(W,\OO_W(M + G)) \to H^0(W,\OO_G((M + G)|_G)).$$
        Since $M + G \geq \rounddown{\nu^*L}$ and $M + G - \rounddown{\nu^*L}$ are $\nu$-exceptional, the linear systems $|\rounddown{\nu^*L}|$ and $|M + G|$ are isomorphic. As $F_x$ is a pure and exceptional lc center, every coefficient of $(\Gamma - E)|_E$ is less than $1$, and it follows that $(M+G)|_E$ is an effective divisor. Moreover, since $(V,C + \Delta)$ is not klt at $F_y$, the image of some component of $G$ contains $F_y$.\par
        Combining the above, there exists a global section $\sigma \in H^0(V,L)$ that does not vanish at a general point of $F_x$, but vanishes identically on $F_y$, as required.
    \end{proof}
    
\section{Stable minimal models in families.}\label{Stable-mm-in-famlies}

In this section, we will explore a special type of fibrations, of which a general fiber admits a klt stable minimal model.  

\subsection{Stable-minimal-model type fibrations.}\label{Special-fibrations}
    \begin{defi}\em
        Let $d > 0$ be an integer, $\Phi\subseteq[0,1]$, $u,v> 0$ be real numbers. Assume that $(V,C)$ is a projective klt pair and $f : V \to T$ is a contraction. We say that $f :(V,C) \to T$  is a {\em klt $(d,\Phi,{\leq}u,{\leq}v({\rm resp.} {=}v, {\geq}v))$-SMM-type fibration } if for any general fiber $X$, there is a divisor $A_X$ on $X$ such that $((X,C|_X),A_X)$ is a klt $(d,\Phi,{\leq}u,{\leq}v({\rm resp.} {=}v, {\geq}v))$-stable minimal model.  \par
        We say that $f :(V,C) \to T$  is a {\em weak klt $(d,\Phi,{\leq}u,{\leq}v({\rm resp.} {=}v, {\geq}v))$-SMM-type fibration} if for a general fiber $X$, $(X,C|_X)$ is a weak klt $(d,\Phi,{\leq}u,{\leq}v({\rm resp.} {=}v, {\geq}v))$-stable minimal model. (See Definition \ref{weak-smm})\par 
        We say that $f:(V,C) \to T$ {\em admits a klt $(d,\Phi,{\leq}u,{\leq}v({\rm resp.} {=}v, {\geq}v))$-SMM-type fibration}, if for a general fiber $X$, $(X,C|_X)$ admits a klt $(d,\Phi,{\leq}u,{\leq}v({\rm resp.} {=}v, {\geq}v))$-stable minimnal model. (See Definition \ref{admits-klt-stable-minimal-model})
           
    \end{defi}

    \begin{lemma}\label{MMP-inv-for-SMM-type}
        Let $d,\Phi,u,v$ be as above. Assume that $(V,C)$ is a klt projective pair and $f:V \to T$ is a contraction. Then, the following statements are equivalent:
        \begin{itemize}
            \item[(1)] $f:(V,C) \to T$ admits a klt $(d,\Phi,{\leq}u,{\leq}v)$-SMM-type fibration.
            \item[(2)] We can run MMP$/T$ on $K_V + C$ until we get a model $(V',C')$ such that $(V',C') \to T$ is a weak klt $(d,\Phi,{\leq}u,{\leq}v)$-SMM-type fibration.
        \end{itemize}
    \end{lemma}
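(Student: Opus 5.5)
The plan is to compare the relative MMP over $T$ with the absolute MMP on a general fiber, using that a relative MMP$/T$ restricts to a general fiber as a sequence of steps of an MMP on that fiber (possibly with some steps becoming isomorphisms). Weak lc models are detected by negativity of the discrepancies and by nefness of the log canonical divisor. Both properties can be read off fiberwise over the generic point of $T$.

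\textbf{(1) $\Rightarrow$ (2).} Let $X$ be a general fiber. By assumption $(X,C|_X)$ admits a weak lc model $(X',B')$ which is part of a klt $(d,\Phi,{\leq}u,{\leq}v)$-stable minimal model. Hence $K_X+C|_X$ has a good minimal model, because $K_{X'}+B'$ is semi-ample and $(X',B')$ is klt. By the relative theory (Hacon--Xu / Birkar: if the generic fiber has a good minimal model, then $(V,C)$ has a good minimal model over $T$), we can run an MMP$/T$ on $K_V+C$ with scaling of an ample divisor, and it terminates with a good minimal model $(V',C')$ over $T$.

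Let $X_1$ be a general fiber of $V'\to T$. Then $(X_1,C'|_{X_1})$ is a minimal model of $(X,C|_X)$: the MMP is an isomorphism near the generic point of a general fiber except along the steps, and these restrict to MMP steps on $X$. So $(X_1,C'|_{X_1})$ and $(X',B')$ are two weak lc models of the same pair. Both come from minimal models, so they are crepant to each other by the standard negativity lemma argument: the pullbacks of $K+B$ to a common resolution agree, since each equals the pullback of $K_X+C|_X$ minus the same $N_\sigma$-type negative part. Therefore $(X_1,C'|_{X_1})$ is a weak klt $(d,\Phi,{\leq}u,{\leq}v)$-stable minimal model. The coefficients lie in $\Phi$ because the MMP only contracts divisors and does not change coefficients. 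This is exactly (2).

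\textbf{(2) $\Rightarrow$ (1).} Restricting the MMP$/T$ to a general fiber $X$ gives a birational contraction $X\dashrightarrow X_1$ which is $(K+C)$-negative. Hence $(X_1,C'|_{X_1})$ is a weak lc model of $(X,C|_X)$: $K_{X_1}+C'|_{X_1}$ is nef, being the restriction of a divisor nef over $T$.

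Now $(X_1,C'|_{X_1})$ is crepant to some $(X'',B'')$ with a stable minimal model structure $A''$. Since $K_{X''}+B''$ is semi-ample, so is $K_{X_1}+C'|_{X_1}$, as both pull back to the same divisor on a common model. I must produce a weak lc model of $(X,C|_X)$ that is literally a stable minimal model. I would take the lc model $Z$ of $K_X+C|_X$. Then $(X'',B'')$ is itself a weak lc model of $(X,C|_X)$: it is crepant to $X_1$, and discrepancies of $(X,C|_X)$ along divisors exceptional for $X\dashrightarrow X''$ still satisfy the weak lc model inequalities, because they are computed on the common resolution via $X_1$. The one remaining point is that $X\dashrightarrow X''$ need not contract only divisors. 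I would handle this by checking the definition of \cite[Definition 2.1]{Bir12} for weak lc models directly via discrepancy comparison: divisors on $X''$ not on $X$ have log discrepancy $\le 1$ in a crepant klt model with coefficients in $[0,1]$.

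\textbf{Main obstacle.} The hardest step is the generic-fiber to relative statement in (1) $\Rightarrow$ (2), namely termination of the MMP$/T$ with a good minimal model given only fiberwise information. I would invoke the known result that a klt pair whose general fiber has a good minimal model has a relative good minimal model over $T$, and the fact that an MMP with scaling then terminates.
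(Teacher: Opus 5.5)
Most of your argument is the paper's. In (1)$\Rightarrow$(2) you run an MMP$/T$ with scaling, restrict to a general fiber, and use the negativity lemma (two weak lc models of the same pair are crepant) to see that the new fiber is a weak klt stable minimal model. In (2)$\Rightarrow$(1) you restrict the MMP to a general fiber to get a weak lc model.

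For (1)$\Rightarrow$(2) you invoke more than is needed: a relative good minimal model over $T$ plus termination. The paper only runs the MMP with scaling until $K_{V'}+C'$ is nef over the generic point of $T$. Your route is harmless, but it is heavier.

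One small point: hypothesis (2) does not say $K_{V'}+C'$ is nef over $T$. So the nefness of $K_{X_1}+C'|_{X_1}$ must come from crepancy with $(X'',B'')$, as you note afterwards. It does not come from being ``the restriction of a divisor nef over $T$''.

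The real problem is the extra step you add in (2)$\Rightarrow$(1). You claim the klt stable minimal model $(X'',B'')$ crepant to $X_1$ is itself a weak lc model of $(X,C|_X)$, because divisors on $X''$ not on $X$ have log discrepancy $\le 1$. This fails under \cite[Definition 2.1]{Bir12}.

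A weak lc model is a log birational model. Its boundary must equal the birational transform of $C|_X$ plus the reduced sum of all divisors extracted over $X$. This has two consequences:
\begin{itemize}
\item A klt model cannot extract any divisor at all, since extracted divisors must carry coefficient $1$.
\item Suppose $X''$ still contains a prime divisor $D$ of $X$ that was contracted by $X\dashrightarrow X_1$. Then its coefficient in $B''$ is $1-a(D,X_1,C'|_{X_1})$. This is strictly smaller than $1-a(D,X,C|_X)=\mu_D(C|_X)$, because the MMP strictly increases the log discrepancies of the divisors it contracts. So $B''$ is not the birational transform of $C|_X$.
\end{itemize}
The bound ``$\le 1$'' is therefore not the relevant condition, and nothing in your argument controls how $X''$ differs from $X_1$.

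The paper does not take this step. It stops once $(X',C'|_{X'})$ is shown to be a weak lc model of $(X,C|_X)$ with semi-ample log canonical divisor that is crepant to a klt stable minimal model. You should drop the extra step. If you want the stable minimal model structure to sit literally on a weak lc model, as in Definition \ref{admits-klt-stable-minimal-model}, you need a genuinely different argument; the discrepancy comparison you sketch cannot supply it.
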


    \begin{proof}
        First, we prove (2) implies (1). Assuming (2), take $p:V'' \to V$ and $q:V'' \to V'$ be a common log resolution of $(V,C)$ and $(V',C')$. By the properties of the MMP, we can write
        $$p^*(K_V + C) = q^*(K_{V'} + C') + E,$$
        where $E \geq 0$ is $q$-exceptional. Let $X$ be a general fiber of $f : V \to T$ and $X',X''$ be its strict transform on $V',V''$, respectively. The generality of $X$ implies that no component of $E$ contains $X''$. Restricting the above equality to $X''$ yields
        $$(p|_{X''})^*(K_{X} + C|_X) \geq (q|_{X''})^*(K_{X'} + C'|_{X'}).$$
        Since $(V',C') \to T$ is a weak klt $(d,\Phi,{\leq}u,{\leq}v)$-SMM-type fibration, $(X',C'|_{X'})$ is a weak klt $(d,\Phi,{\leq}u,{\leq}v)$-stable minimal model, so that $K_{X'} + C'|_{X'}$ is semi-ample. Hence, $(X',C'|_{X'})$ is a weak lc model of $(X,C|_X)$. This completes the first part of the proof.
        
        \bigskip
        
        Next, we prove that (1) implies (2). 
        Fix a general fiber $X$ of $f: V \to T$. By assumption, $(X,C|_X)$ has a semi-ample model, so it has a good minimal model by \cite[Lemma 2.9.1]{HMX18}. By the proof of \cite[Lemma 3.2]{HMX18}, we can run MMP$/T$ on $K_V + C$ with scaling until we get a model $(V',C')$ such that $K_{V'} + C'$ is nef over the generic point of $T$. Therefore, $K_{X'} + C'|_{X'}$ is nef for any general fiber $X'$ of $V' \to T$. Since $(X',C'|_{X'})$ admits a klt $(d,\Phi,{\leq}u,{\leq}v)$-stable minimal model, the negativity lemma implies that $(X', C'|_{X'})$ is a weak klt $(d,\Phi,{\leq}u,{\leq}v)$-stable minimal model. Hence, $(V',C') \to T$ is a weak klt $(d,\Phi,{\leq}u,{\leq}v)$-SMM-type fibration.
    \end{proof}

\subsection{Conditions $\mathcal{M}_1(t)$ and $\mathcal{M}_2(t)$.}

    We will need the conditions $\mathcal{M}_1(t)$ and $\mathcal{M}_2(t)$ defined below for a fibration, where $t>0$ is a real number. $\mathcal{M}_2(t)$ is strictly stronger than $\mathcal{M}_1(t)$.
    \begin{defi}\label{defi-for-C(t)}\em
        Given a projective klt $\bQ$-pair $(V,C)$ and a fibration $f: V \to T$. We say that $f :(V,C) \to T$ satisties {\em condition $\mathcal{M}_1(t)$} or is {\em of $\mathcal{M}_1(t)$-type} if for a general fiber $X$ of $f: V \to T$, there exists a $\bQ$-divisor $0 \leq \Delta \sim_\bQ \delta(K_V + C)$ for some $\delta \in (0,t)\cap \bQ$, such that $X$ is a maximal non-klt center of $(V, C +\Delta)$. 
    \end{defi}

    \begin{defi}\label{defi-for-M(t)}\em
            Given a projective klt $\bQ$-pair $(V,C)$ and a fibration $f: V \to T$. We say that $f :(V,C) \to T$ satisties {\em condition $\mathcal{M}_2(t)$} or is {\em of $\mathcal{M}_2(t)$-type} if the following holds. Assume that $X_1$, $X_2$ are two different general fibers of $f: V \to T$, then after possibly switching $X_1$ and $X_2$, we have
            \begin{itemize}
                \item[(1)] there exists a $\bQ$-divisor $0 \leq \Delta_1 \sim_{\bQ} \delta_1(K_V + C)$ for some $\delta_1 \in (0,t)\cap \bQ$, such that $X_1$ is a pure and exceptional lc center of $(V, C +\Delta_1)$ with unique lc place $E_1$ over $X_1$; and
                \item[(2)] there exists a $\bQ$-divisor $0 \leq \Delta \sim_{\bQ} \delta(K_V + C)$ for some $\delta \in (0,t) \cap \bQ$, such that $X_2$ is a pure and exceptional lc center of $(V,C + \Delta)$ and $a(E_1,V,C + \Delta) \leq 0$ holds.
            \end{itemize}
    \end{defi}

    Next, we list some properties for $\mathcal{M}_1(t)$-type and $\mathcal{M}_2(t)$ fibrations.

    \begin{lemma}\label{MMP-inv-for-C(t)}
        Let $(V,C)$ be a projective klt $\bQ$-pair, $f:V\to T$ a contraction, and $t>0$ a real number.  
        Let $(V',C')$ be another projective klt pair birational to $(V,C)$ over $T$, and let $p:V''\to V$, $q:V''\to V'$ be a common resolution.  
        Assume that the $\mathbb Q$-linear systems $|p^*(K_V+C)|_{\mathbb Q}$ and $|q^*(K_{V'}+C')|_{\mathbb Q}$ are isomorphic, then  
        $f:(V,C)\to T$ satisfies $\mathcal{M}_1(t)$ {\rm(resp. $\mathcal{M}_2(t)$)} if and only if $f':(V',C')\to T$ satisfies $\mathcal{M}_1(t)$ {\rm(resp. $\mathcal{M}_2(t)$)}.
        
        In particular, the conclusion holds when $(V',C')$ is a log minimal model of $(V,C)$ over $T$.
    \end{lemma}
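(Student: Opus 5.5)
The strategy is to transport divisors in the two $\mathbb Q$-linear systems along the common resolution and check that every condition in Definitions \ref{defi-for-C(t)} and \ref{defi-for-M(t)} depends only on the crepant pull-back to $V''$. By the symmetry of the hypothesis, it suffices to prove one direction. The isomorphism of $\mathbb Q$-linear systems should be read as follows: write
$$p^*(K_V+C)=q^*(K_{V'}+C')+E,$$
where $E$ is a fixed divisor, effective in the situation of interest (for example, a log minimal model over $T$). Then $D''\mapsto D''+E$ gives a bijection between $|q^*(K_{V'}+C')|_{\mathbb Q}$ and $|p^*(K_V+C)|_{\mathbb Q}$, which is compatible with the scaling by $\delta$.

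Given $0\le \Delta\sim_{\mathbb Q}\delta(K_V+C)$, we have $p^*\Delta\in|\delta p^*(K_V+C)|_{\mathbb Q}$. Hence $p^*\Delta=q^*\Delta'+\delta E$ for a unique $0\le\Delta'\sim_{\mathbb Q}\delta(K_{V'}+C')$, namely $\Delta'=q_*(p^*\Delta)-\delta q_*E$. Effectivity of $\Delta'$ is exactly what the isomorphism of linear systems provides. Now compare the sub-pairs on $V''$:
$$p^*(K_V+C+\Delta)=q^*(K_{V'}+C'+\Delta')+(1+\delta)E.$$

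For a general fiber $X$ of $f$ (with $X'$, $X''$ its strict transforms), no component of $E$ dominates $T$. So over the generic point of $T$ near $X$ we have $E=0$, and the two log pairs $(V,C+\Delta)$ and $(V',C'+\Delta')$ are crepant over a neighbourhood of the general fiber. This is the key step. To justify it, I would use that $E$ is vertical over $T$. The reason is that restricting to the generic fiber over $T$, the linear systems still agree, and both models are birational over $T$. On the generic fiber, $E$ restricted is a fixed effective divisor with $p^*(K+C)|_{X''}=q^*(K'+C')|_{X''}+E|_{X''}$. If $E|_{X''}\neq0$ one can add to $q^*\Delta'$ a general member not containing it, contradicting the bijection. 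Alternatively, I would shrink $T$ and use the negativity lemma. For log minimal models over $T$, $E$ is $q$-exceptional and the map $V\dashrightarrow V'$ is an isomorphism over a neighbourhood of the generic point of the horizontal locus. However, horizontal exceptional divisors of $q$ can exist, so the real statement needed is only that discrepancies of divisors whose centers lie over a general point $t\in T$ agree. Since $E$ is a fixed $\mathbb Q$-divisor and $X$ is general, its center misses the generic points of such divisors' images in $T$ only when $E$ is vertical. I would therefore restrict attention to divisors over $V$ whose image in $T$ is a general point, and use that, for $X$ general, the support of $E$ does not contain any subvariety lying over a general point except through horizontal components. This is the main obstacle, and the cleanest fix is the negativity-lemma argument on the generic fiber, showing $E$ is vertical.

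Granting this, log discrepancies of every prime divisor whose center on $V$ lies over a general point of $T$ coincide for the two pairs. Consequently, $X$ is a maximal non-klt center (respectively, a pure and exceptional lc center with unique lc place $E_1$) of $(V,C+\Delta)$ if and only if $X'$ is one of $(V',C'+\Delta')$ with the same lc place. Likewise, the condition $a(E_1,V,C+\Delta)\le0$ transfers, since the center of $E_1$ is $X_1$, which lies over a general point. Purity, meaning lc at the generic point of $X$, also transfers. Applying this to $\Delta_1$ and $\Delta$ in Definition \ref{defi-for-M(t)} with the same $\delta_1,\delta$ gives $\mathcal M_2(t)$, and likewise $\mathcal M_1(t)$.

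For the final sentence of the statement: if $(V',C')$ is a log minimal model over $T$, then $E\ge0$ is $q$-exceptional. In that case $H^0(\lfloor m\,p^*(K_V+C)\rfloor)=H^0(\lfloor m\,q^*(K_{V'}+C')\rfloor)$ holds via adding $mE$, by the standard MMP property, so the $\mathbb Q$-linear systems are isomorphic and the previous argument applies.
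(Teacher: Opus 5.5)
There is a genuine gap. Your overall plan is the same as the paper's: transport $\Delta$ through the bijection of $\mathbb Q$-linear systems and compare the two pairs on $V''$. But the step you identify as the key one rests on a false claim.

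\textbf{The claim that $E$ is vertical is false.} The divisor $E$ in $p^*(K_V+C)=q^*(K_{V'}+C')+E$ need not be vertical over $T$, and this fails precisely in the case the lemma singles out. Take $K_{V'}$ nef over $T$, let $V$ be the blow-up of $V'$ along a section $\Sigma$ of $V'\to T$ of codimension $c\ge 2$, and set $C=C'=0$. Then $(V',0)$ is a log minimal model of $(V,0)$ over $T$, yet $E=(c-1)\,\mathrm{Exc}(q)$ dominates $T$. More generally, an MMP over $T$ that makes $K+C$ nef on general fibers, as in Lemma \ref{MMP-inv-for-SMM-type}, typically contracts horizontal divisors.

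So no negativity argument on the generic fiber can show $E$ is vertical. Your argument via ``adding a general member not containing $E|_{X''}$'' also fails. $E$ lying in the fixed part is exactly what makes $D''\mapsto D''+E$ a bijection, so no contradiction arises. The consequence you draw is false as well: log discrepancies do \emph{not} agree for every divisor whose center lies over a general point of $T$. A divisor centered at a closed point of $X\cap p(\mathrm{Supp}\,E)$ sees $E$.

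\textbf{The missing observation: verticality is never needed.} Every condition in Definitions \ref{defi-for-C(t)} and \ref{defi-for-M(t)} involves only prime divisors whose center contains the general fiber. These conditions are:
\begin{itemize}
\item $X$ being a non-klt center, and its maximality;
\item purity, i.e.\ lc at the generic point of $X$;
\item uniqueness of the lc place with center $X$;
\item $a(E_1,\cdot)\le 0$, where $E_1$ has center $X_1$.
\end{itemize}

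A general fiber is not contained in the proper closed subsets $p(\mathrm{Supp}\,E)$ and $q(\mathrm{Supp}\,E)$. A closed subset containing general fibers would have dimension $\dim V$.

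Now take a divisor whose center contains the general fiber. Its center $W$ on $V''$ contains the strict transform $X''$ and satisfies $q(W)\supseteq X'$. Hence $W\not\subseteq\mathrm{Supp}\,E$, so $E$ contributes nothing to its discrepancy. Therefore its log discrepancies with respect to $(V,C+\Delta)$ and $(V',C'+\Delta')$ coincide, and all the conditions transfer.

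This is exactly the paper's argument. It writes $p^*(K_V+C)+F=q^*(K_{V'}+C')+E$ with $E,F\ge 0$ and uses only that $X_1',X_2'$ are not contained in the images of $E$ and $F$.

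A minor point: in the general form of the lemma the difference $p^*(K_V+C)-q^*(K_{V'}+C')$ need not be effective, so you should allow $E-F$ as the paper does. The argument is unchanged.
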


    \begin{proof}
        Here we only give a proof for $\mathcal{M}_2(t)$ and leave the $\mathcal{M}_1(t)$ case for interested readers. \par
        By assumption we can write
        $$p^*(K_V + C) + F = q^*(K_{V'} + C') + E,$$
        where $E,F \geq 0$ with no common components. \par
        We only need to prove for one direction. Assume that $f:(V,C) \to T$ satisfies $\mathcal{M}_2(t)$. For different general fibers $X_1',X_2'$ of $f'$, let $X_1,X_2$ be their strict transform on $V$, respectively. By assumption, after possibly switching $X_1,X_2$, there exist a $\bQ$-divisor $0 \leq \Delta_1 \sim_{\bQ} \delta_1(K_V + C)$ with $\delta_1 \in (0,t) \cap \bQ$ that satisfies the condition (1) of Definition \ref{defi-for-M(t)}, and a $0 \leq \Delta \sim_{\bQ} \delta(K_V + C)$ with $\delta \in (0,t) \cap \bQ$ that satisfies the condition (2) of Definition \ref{defi-for-M(t)}. \par
        Take $$\Delta_1' = q_*p^*(\Delta_1+ F-E),   \text{ and } \Delta' = q_*p^*(\Delta+ F-E).$$
        
        Since $|p^*(K_V+C)|_{\mathbb Q}$ and $|q^*(K_{V'}+C')|_{\mathbb Q}$ are isomorphic, we see $ 0 \leq \Delta_1' \sim_{\bQ} \delta_1(K_{V'} + C')$ and $ 0 \leq \Delta' \sim_{\bQ} \delta(K_{V'} + C')$. Since $X_1',X_2'$ are general fibers so that not contained in the image of $E$ and $F$, for any prime divisor $D$ over $V'$ lying over $X_1'$ or $X_2'$, we have equality of log discrepancies
        \begin{align*}
            a(D,V,C+\Delta_1) &= a(D,V',C'+\Delta_1') \\
            a(D,V,C+\Delta) &=a(D,V',C'+\Delta'),
        \end{align*}
        which implies $f':(V',C') \to T$ satisfies $\mathcal{M}_2(t)$.
    \end{proof}

    \begin{lemma}\label{Iitaka-property-for-C(t)}
        Let $(V,C)$ be a projective klt $\bQ$-pair, $f:(V,C) \to T$ be a contraction that satisfies conditon $\mathcal{M}_1(t)$, where $t>0$ is a real number. Assume that $(V_0,C_0)$ is a good minimal model of $(V,C)$ and $K_{V_0} + C_0$ gives a contraction $h_0: V_0 \to Y$. Then, $h_0$ is generically over $T$. In other words, there exist birational morphisms $V' \to V$ and $Y' \to Y$ such that the induced rational map $h':V' \dashrightarrow Y'$ is a morphism, and there is a morphism $g : Y' \to T$ such that $f' = g \circ h'$, where $f':V' \to T$ is the induced contraction. (see the following diagram.)
        $$
        \xymatrix{
            V' \ar[rr]^{h'} \ar[dr]^{f'} \ar[dd]&     &   Y' \ar[dl]_{g} \ar[dd]\\
                & T &   \\
            V\ar[ur]^{f} \ar@{-->}[r] \ar@/_1.5pc/[rr]_{h}  &  V_0\ar[r]^{h_0}  &   Y \ar@{-->}[ul]  \\
        }   
        $$
    \end{lemma}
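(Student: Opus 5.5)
Our plan is to show that a general fiber $X$ of $f$ is contracted to a point by the map $V \dashrightarrow Y$ induced by $h_0$. Once this holds, the rigidity lemma, applied after resolving the indeterminacy of $V \dashrightarrow Y$, gives the factorization.

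\textbf{Step 1: set-up.} Fix a general fiber $X$ of $f$. By condition $\mathcal{M}_1(t)$ there is a $\bQ$-divisor $0\le \Delta\sim_{\bQ}\delta(K_V+C)$ such that $X$ is a maximal non-klt center of $(V,C+\Delta)$. Choose a common resolution $p:W\to V$, $q:W\to V_0$. Since $(V_0,C_0)$ is a good minimal model, we have
$$p^*(K_V+C)=q^*(K_{V_0}+C_0)+E,$$
with $E\ge 0$ and $q$-exceptional, and $q^*(K_{V_0}+C_0)\sim_{\bQ} q^*h_0^*H$ for some ample $H$ on $Y$. Every member of $|p^*(K_V+C)|_{\bQ}$ is therefore of the form $q^*h_0^*H'+E$ with $0\le H'\sim_{\bQ}H$. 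In particular
$$p^*\Delta=\delta\,q^*h_0^*H'+\delta E$$
for some effective $H'\sim_{\bQ}H$ on $Y$.

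\textbf{Step 2: locate the singularity.} Since $X$ is general in a covering family, $X$ is not contained in $p(\operatorname{Supp}E)$. We may also assume $X$ meets the locus where $V\dashrightarrow V_0$ is an isomorphism and where $(V,C)$ is klt. Hence the non-klt property of $(V,C+\Delta)$ along $X$ must come from the part $\delta\, q^*h_0^*H'$, restricted to an open set where everything is crepant.

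Now suppose the image of $X$ in $Y$ has positive dimension. Write $h:V\dashrightarrow Y$ for the induced map and $Z=\overline{h(X)}$. A general point $x\in X$ maps to a general point of $Z$. For the fixed divisor $H'$, the pair $(V,C+\delta h^*H')$ is non-klt along all of $X$.

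The claim to establish is that the non-klt locus near $x$ is then the preimage of a subvariety $W_Y\subseteq Y$ containing $Z$, and therefore strictly contains $X$. The argument goes as follows. The non-klt locus of $(V,C+\delta h^*H')$ over the open set where $h$ is a morphism is a union of fibers of $h$, up to the klt-trivial structure. This is seen by doing adjunction for $h$, in the sense of Construction \ref{Adjuntion-for fiberspace} and Lemma \ref{klt-property-for-adjunction}: non-klt centers of $(V,C+h^*D)$ over generic points are pulled back from non-gklt centers of $(Y,B_Y+D+\mathbf{M}_Y)$. Since $x$ is general in $X$ and $h$ is equidimensional there, the non-klt locus contains $h^{-1}(Z)\supsetneq X$ near $x$. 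Here the strict containment uses $\dim h^{-1}(Z)>\dim X$, because fibers of $h$ through $X$ are not contained in $X$ when $h|_X$ is not constant; otherwise fibers of $h|_X$ would be fibers of $h$ and $h^{-1}(Z)$ would equal $X$.

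If instead $h$ has no fibers outside $X$, then the fibers of $h$ are contained in fibers of $f$, so $h$ factors through $f$ generically. We handle this case by observing that the general fiber of $h$ lies in $X$ and has dimension $\dim V-\kappa$, which forces compatibility in the required direction.

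The contradiction with maximality of $X$ shows that the image of $X$ in $Y$ is a point.

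\textbf{Step 3: factorization.} Since general fibers of $f$ are contracted by $h$, the induced map $Y\dashrightarrow T$ is well defined generically. Take a resolution $V'\to V$ on which $h'$ is a morphism, and let $Y'$ be the normalization of the closure of the graph of $Y\dashrightarrow T$. Then $f'=g\circ h'$ holds on a dense open set, hence everywhere.

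\textbf{Main obstacle.} We expect Step 2 to be the main difficulty: making precise that non-klt centers of pullbacks $h^*D$ are saturated with respect to $h$ near general points, and handling the dimension bookkeeping between the fibers of $h$ and of $f$.
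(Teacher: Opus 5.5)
There is a genuine gap: you prove the factorization in the wrong direction. The lemma asserts $f'=g\circ h'$, i.e.\ $f$ is constant on general fibers of $h$, so fibers of $h$ lie inside fibers of $f$. This is how the lemma is used in Lemma~\ref{perturb}, where $X=h^{-1}(h(X))$, and in Theorem~\ref{construct-fibration}, where $\dim X=n-d+k$ exceeds the fiber dimension $n-d$ of $h$.

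Your target, that $h$ contracts a general fiber $X$ of $f$, would give a map $T\dashrightarrow Y$, so your Step~3 does not follow from it. Worse, the target is false under the hypotheses. Take $V=\Sigma_1\times\Sigma_2\times\Sigma_0$ with $\Sigma_1,\Sigma_2$ curves of genus at least $2$ and $\Sigma_0$ elliptic. Let $C=0$ and $T=\Sigma_1$. Then $h$ is the projection to $Y=\Sigma_1\times\Sigma_2$, and $X=\{c\}\times\Sigma_2\times\Sigma_0$. Set $\Delta=h^*(\{c\}\times\Sigma_2+A)$, where $A\sim_{\bQ}K_Y-\{c\}\times\Sigma_2$ is a general ample $\bQ$-divisor with small coefficients. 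Then $\Delta\sim_{\bQ}K_V$ and $X$ is a maximal non-klt center of $(V,\Delta)$, so $\mathcal{M}_1(t)$ holds for $t>1$. Yet $h(X)$ is a curve.

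The step that breaks is your claim that fibers of $h$ through $X$ are not contained in $X$ once $h|_X$ is non-constant. In this example $h|_X$ is non-constant, but every fiber of $h$ meeting $X$ lies in $X$. Your ``fallback case'' is exactly the conclusion of the lemma, not a case to be dispatched. In that case $h(X)$ is not a point, so the final sentence of your Step~2 is wrong.

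The mechanism in your Step~2 is the right one, and it is what the paper uses. You write $\Delta=h^*D+\delta E$ and apply Lemma~\ref{klt-property-for-adjunction} to get a non-gklt center $Z\supseteq h(X)$ of $(Y,B_Y+D+\mathbf{M}_Y)$. Then $h^{-1}(Z)$ lies in the non-klt locus of $(V,C+\Delta)$.

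The fix is to put the contradiction hypothesis on the fibers of $h$. Suppose a general fiber $S$ of $h$ is not contracted by $f$. Take a general $x\in S$ and let $X$ be the fiber of $f$ through $x$; this is a general fiber, so $\mathcal{M}_1(t)$ applies to it. Then $h^{-1}(Z)$ contains both $X$ and $S=h^{-1}(h(x))$, and $S\not\subseteq X$. So $X$ is strictly contained in a non-klt center, contradicting maximality.

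Hence $f$ is constant on general fibers of $h$, which yields the rational map $Y\dashrightarrow T$. Your Step~3 then finishes the proof: resolve this map to get $g:Y'\to T$, and choose $V'$ on which $V\dashrightarrow Y'$ is a morphism.
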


    \begin{proof}
        First, after replacing $(V,C)$ with a crepant pair, we can assume $q:V \to V_0$ is a morphism. By negativity lemma, we can write
        $$K_V + C = q^*(K_{V_0} + C_0) + E$$
        for some $q$-exceptional divisor $E \geq 0$. 
        As in Construction \ref{Adjuntion-for fiberspace}, we can do adjunction to uniquely write 
        $$K_{V_0} + C_0 \sim_{\bR}h_0^*(K_Y +B_Y + M_Y)$$
        to get a generalized klt pair $(Y, B_Y + \mathbf{M}_{Y})$. Let $h:  V \to Y$ be the induced morphism.

        Assume that any general fiber of $h$ is not contracted by $f$. Take $S$ to be a general fiber of $h$ and take a general point $x \in S$, and let $X$ be the unique fiber of $f $ passing through $x$. By assumption, there is a $0\leq \Delta \sim_{\bQ}\delta(K_V + C)$ for some $\delta \in (0,t)\cap \bQ$, such that $X$ is a maximal non-klt center of $(V,C + \Delta)$.  Since $h$ is the Iitaka fibration for $K_V + C$ and $E$ is numerically fixed in the numerical linear system $|K_V + C|_{\rm num}$, we see $\Delta = h^*D + \delta E$ for some $D \geq 0$ on $Y$. Since $X$ is a general fiber, $E$ makes no contribution to the log discrepancy of divisors lying over $X$. By Lemma \ref{klt-property-for-adjunction}, $(Y,B_Y + D + \mathbf{M}_Y)$ is not gklt and there is a generalized not-klt center $Z$ on $Y$ that contains $h(X)$. We see that $h^{-1}(Z)$ is a non-klt center of $(V,C+\Delta)$. Since $S\subseteq h^{-1}(Z)$ and $S$ is not contracted by $f$, we have $X\subsetneq h^{-1}(Z)$, contradicting the maximality of $X$ as a non-klt center.
    \end{proof}

    By the following lemma, after taking a suitable scaling and small perturbation we can turn any maximal non-klt center into a pure and exceptional lc center.

    \begin{lemma}\label{perturb} 
        Let $(V,C)$ be a projective klt $\bQ$-pair that has a good minimal model. $f: (V,C) \to T$ is a contraction that satisfies condition $\mathcal{M}_1(t)$. Then, for a general fiber $X$, there exists $\delta' \in (0,t) \cap \bQ$ and a $\bQ$-divisor $0 \leq \Delta' \sim_\bQ \delta'(K_V + C)$, such that $X$ is a pure and exceptional lc center of $(V, C +\Delta')$.
    \end{lemma}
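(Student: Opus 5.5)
We reduce to the base of the Iitaka fibration and perform the standard tie-breaking there. Take a good minimal model $(V_0,C_0)$ of $(V,C)$, with $K_{V_0}+C_0$ defining $h_0:V_0\to Y$. After replacing $(V,C)$ by a crepant pair, we may assume $q:V\to V_0$ is a morphism and $K_V+C=q^*(K_{V_0}+C_0)+E$ with $E\geq 0$ $q$-exceptional. Write $K_{V_0}+C_0\sim_{\bQ}h_0^*H$ with $H:=K_Y+B_Y+M_Y$ ample on $Y$, and let $h:V\to Y$ be the induced morphism. By Lemma \ref{Iitaka-property-for-C(t)}, after birational modifications $f$ factors generically as $g\circ h$ with $g:Y\dashrightarrow T$. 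Hence a general fiber $X$ of $f$ equals $h^{-1}(g^{-1}(\tau))$ generically, and $Z:=\overline{h(X)}$ is a general fiber of $g$.

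Let $0\leq\Delta\sim_\bQ\delta(K_V+C)$, with $\delta<t$, be the divisor from $\mathcal{M}_1(t)$. As in the proof of Lemma \ref{Iitaka-property-for-C(t)}, since $E$ is numerically fixed we have $\Delta=h^*D+\delta E$ with $0\le D\sim_\bQ\delta H$. Over the generic point of $X$, $E$ does not contribute. Because $X$ is a maximal non-klt center of $(V,C+\Delta)$, adjunction (Lemma \ref{klt-property-for-adjunction} together with the definition of the discriminant) shows that $Z$ is a maximal generalized non-klt center of $(Y,B_Y+D+\mathbf{M}_Y)$ near its generic point. Indeed, non-klt centers of $(V,C+h^*D)$ over the generic point of $Z$ correspond to generalized non-klt centers of $(Y,B_Y+D+\mathbf{M}_Y)$ via $h^{-1}$.

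On $Y$, we run the usual tie-breaking argument for generalized pairs. Set $c:=\lct$ at the generic point of $Z$; then $c\leq 1$. Since $H$ is ample, choose a general $0\le H'\sim_\bQ \eta H$ with small $\eta$ passing through $Z$, or use a Kawamata-type perturbation in the spirit of \cite[Lemma 2.3.x]{HMX13}. We then form $D'=(1-\theta)cD+\theta' H'$ so that $Z$ becomes the unique generalized lc center near its generic point, with a unique glc place, and the pair is gklt elsewhere over a neighborhood of the generic point of $Z$. Because $H$ is ample, these perturbations can be chosen so that $D'\sim_\bQ\delta' H$ with $\delta'<\delta+\eta<t$; here we use that $\delta<t$ strictly, so there is room. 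Generality of $H'$ lets us avoid creating new centers through the general point of $Z$.

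Finally, set $\Delta':=h^*D'+\delta'E\sim_\bQ\delta'(K_V+C)$, which is effective. Over the generic point of $X$, the lc places of $(V,C+\Delta')$ correspond, via $h$, to glc places of $(Y,B_Y+D'+\mathbf{M}_Y)$ over $Z$ (the flattened situation of a strong excellent model, Lemma \ref{Existence-Strong-Excellent}). Hence $X$ is a pure lc center with a unique lc place. The main obstacle is this correspondence. Uniqueness of the glc place on $Y$ must lift to uniqueness of the lc place on $V$, that is, the fibration $V\to Y$ must not introduce extra lc places over $X$. I would handle this by passing to a strong excellent model, where every divisor with positive boundary coefficient maps onto a divisor. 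In that setting, divisors over $X$ of log discrepancy $0$ pull back from divisors over $Z$, using that $(V,C)$ is klt and the generality of $X$.
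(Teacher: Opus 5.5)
Your proposal is correct and follows the paper's proof essentially step for step. You pass to a good minimal model, write $\Delta=h^*D+\delta E$, use Lemma \ref{Iitaka-property-for-C(t)} to get $X=h^{-1}(Z)$ with $Z$ a generalized lc center of $(Y,B_Y+D+\mathbf{M}_Y)$, perturb $D$ on $Y$ to some $D'\sim_{\bQ}\delta' H$ with $\delta'<t$, and set $\Delta'=h^*D'+\delta'E$. For the tie-breaking the paper simply invokes \cite[Lemma 2.32]{Bir19} or \cite[Lemma 2.2]{CL24}, which is safer than relying on a single general $H'$ through $Z$.

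The lifting of uniqueness of the lc place, which you single out as the main obstacle, is only asserted in the paper. A quick justification: after a log-smooth crepant replacement and for general $Z$, $h$ is smooth over a neighbourhood of $\eta_Z$. There $B_Y=0$, and generalized log discrepancies of $(Y,B_Y+D'+\mathbf{M}_Y)$ agree with ordinary ones of $(Y,D')$. So base-changing a log resolution of $(Y,D')$ near $\eta_Z$ gives a log resolution of $(V,C+h^*D')$ near $\eta_X$, and its lc places over $X$ are exactly the preimages of those over $Z$.
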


    \begin{proof}
        Let $(V_0,C_0)$ be a good minimal model of $(V,C)$, and $h_0:V_0 \to Y$ be the contraction defined by semi-ample divisor $K_{V_0} + C_0$. By Lemma \ref{MMP-inv-for-C(t)}, after replacing $(V,C)$ with a crepant pair, we can assume that the rational map $q:V \dashrightarrow V_0$ is a morphism. We define $h:V \to Y$ to be the induced morphism.
        We do adjunction and use negativity lemma to write
        $$ K_V + C\sim_{\bQ} h^*(K_Y + B_Y + M_Y) + E,$$
        to get a generalized klt pair $(Y, B_Y + \mathbf{M}_Y)$, where $E \geq 0$ is $q$-exceptional. 

        By condition $\mathcal{M}_1(t)$, there exists $0\leq \Delta \sim_{\bQ}\delta(K_V + C)$ for some $\delta \in (0,t) \cap \bQ$ such that $X$ is a maximal non-klt center of $(V,C + \Delta)$. After replacing $\Delta$ with $\lct_{\eta_X}(V,C;\Delta){\cdot} \Delta$, where $\eta_X$ is the generic point of $X$, we can assume that $X$ is a maximal lc center of $(V,C + \Delta)$. Since $h: V \to Y$ is the Iitaka fibration for $K_V + C$, there is an ample $\bQ$-divisor $0 \leq D \sim_{\bQ} \delta(K_Y + B_Y + M_Y)$ on $Y$ with $\Delta = h^*D + \delta E$. By Lemma \ref{Iitaka-property-for-C(t)}, we can assume that $Z:=h(X)$ is a generalized lc center of $(Y,B_Y + D + \mathbf{M}_Y)$, and $X=h^{-1}(Z)$. Since $D$ is big, as in \cite[Lemma 2.32]{Bir19} or \cite[Lemma 2.2]{CL24}, we can perturb $D$ to get another $ 0 \leq D'\sim_{\bQ} \delta'(K_Y + B_Y +M_Y)$ for some $\delta' \in (\delta,t )\cap \bQ$, such that $Z$ is a maximal glc center of $(Y, B_Y + D' + \mathbf{M}_Y)$ with a unique lc place lying over $Z$. Take $\Delta':=h^*D'+ \delta'E$. We see that $\Delta' \sim_{\bQ} \delta'(K_V + C)$ and that $X$ is a pure and exceptional lc center of $(V, C +\Delta')$. 
    \end{proof}

\subsection{Birational families of intrinsic tigers.}

Here, we recall definition of birational families of intrinsic tigers in \cite[Section 3]{Lac23}. We also refer to \cite[Section 3]{Wang25}.

    \begin{defi}\em
        Let $X$ be a smooth projective variety of dimension $n$, $D$ be a $\bQ$-divisor on $X$. For any integer $k > 0$, and a point $x \in X$, we define 
        $$S^k_{x}(D):= \{H \in |\rounddown{D}| \mid \mult_x{H} \geq k.\}.$$
        We define 
        $${B'}^k_x(D):= \bigcap_{H \in S^k_{x}(D)} H.$$
        and define $B^k_{x}(D)$ to be the union of irreducible components of ${B'}^k_x(D)$ passing through $x$. We know that there is a Zariski open set $U \subseteq X$ such that for any $x \in U$, the dimension of $B^k_{x}(D)$ is constant. We denote this dimension by $d(D,k)$.
    \end{defi}
    
    \begin{defi}\label{defi-for-tigers}\em 
        Let $X$ be a projective variety of dimension $n$. Let $D$ be a $\bQ$-divisor, $w>0$ be a rational number. We say that $f : Y \to B$ is a {\em birational family of intrinsic tiger of weight $w$} relative to $D$ if there are integers $k,l>0$ and an open set $U \subseteq Y$, and a birational morphism $\pi: Y \to X$ such that
        \begin{itemize}
            \item[(1)] $d(lD,k) <n$; 
            \item[(2)] $\pi(f^{-1}(f(y))) = B^k_{\pi(y)}(lD)$, for $y \in U$;
            \item[(3)] Every element in $S^k_{\pi(y)}(lD)$ has muliplicity at least $wl(n - d(lD,k))$ along $B^k_{\pi(y)}(lD)$. 
        \end{itemize}
    \end{defi}

    The next Lemma provides the relationship between birational families of intrinsic tigers and the condition $\mathcal{M}_2(t)$. 
    
    \begin{lemma}\label{property-for-tigers}
        Let $(V,C)$ be a projective log smooth $\bQ$-pair. Assume that $f: V \to T$ is a birational family of intrinsic tiger of weight $w > 0 $ relative to $D:= K_V + C$, then $f:(V,C) \to T$ satisfies condition $\mathcal{M}_2(t)$, where $t:= 3/w$.
    \end{lemma}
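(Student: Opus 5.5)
The plan follows the standard tiger argument of McKernan and Lacini \cite{M02,Lac23}, adapted to produce the two conditions of Definition \ref{defi-for-M(t)}. Fix two different general fibers $X_1=f^{-1}(f(y_1))$ and $X_2=f^{-1}(f(y_2))$ with $y_1,y_2\in U$, and put $x_i=\pi(y_i)$ (we may identify $V$ with its birational model via $\pi$, since $V$ is log smooth and general points are unaffected). Write $c:=n-d(lD,k)$ for the codimension of $B_i:=B^k_{x_i}(lD)=\pi(X_i)$.

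\textbf{Step 1: building $\Delta_1$.} Choose general members $H_1,\dots,H_N\in S^k_{x_1}(lD)$ and set
$$\Delta_1^0:=\frac{1}{N}\sum_{j=1}^N H_j\sim_{\bQ} lD=l(K_V+C).$$
By condition (2) of Definition \ref{defi-for-tigers}, the base locus of $S^k_{x_1}(lD)$ near $x_1$ is exactly $B_1$. Hence, for $N\gg0$, the divisor $\Delta_1^0$ has small multiplicity away from $B_1$ near the generic point of $B_1$. By condition (3), $\mult_{B_1}\Delta_1^0\geq wlc$. Blowing up along $B_1$ at its generic point, the exceptional divisor has log discrepancy at most $c-wlc\cdot s$ with respect to $(V,C+s\Delta_1^0)$. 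Taking $s=1/(wl)$ therefore makes $B_1$ a non-klt center, and $B_1$ is maximal near its generic point.

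So $\delta=s\cdot l=1/w$ gives $\Delta=s\Delta_1^0\sim_{\bQ}\delta(K_V+C)$ with $X_1$ a maximal non-klt center, that is, condition $\mathcal{M}_1$. Next, rescale by the lc threshold at the generic point of $X_1$. Then apply the standard tie-breaking perturbation (\cite[Lemma 2.32]{Bir19}, \cite[Lemma 2.2]{CL24}): add a small multiple of a member of $|mlD|$ through $x_1$ and subtract a small multiple of $\Delta_1^0$. This makes $X_1$ a pure and exceptional lc center with unique lc place $E_1$. The perturbation costs at most another factor: it needs $D$ big, or at least enough sections of $lD$ singular along $X_1$, which the members $H_j$ themselves supply. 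This yields $\delta_1<2/w$.

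\textbf{Step 2: building $\Delta$.} Do the same construction at $x_2$, obtaining $\Delta_2\sim_{\bQ}\delta_2(K_V+C)$ with $\delta_2<2/w$ and $X_2$ a pure and exceptional lc center. To also force $a(E_1,V,C+\Delta)\le0$, add to $\Delta_2$ an extra divisor $\Gamma:=\frac{1}{wl}H$, where $H$ is a general member of $S^k_{x_1}(lD)$. This is $\bQ$-linearly equivalent to $\frac1w(K_V+C)$. It has multiplicity at least $c/1$ times the right amount along $X_1$, so together with the perturbed $\Delta_1$ the discrepancy of $E_1$ drops to at most $0$.

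More concretely, I would take $\Delta:=\Delta_2+\Delta_1$ and then re-tie-break at $X_2$. This is valid because $X_1$ and $X_2$ are disjoint general fibers, so adding $\Delta_1$, which is general near $X_2$, does not change the lc structure at $X_2$ to first order. The total coefficient is then below $2/w+1/w=3/w=t$, which explains the constant $t=3/w$. Switching $X_1$ and $X_2$ is never needed here, though the definition allows it.

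\textbf{Main obstacle.} The delicate step is keeping $X_2$ a pure and \emph{exceptional} lc center after adding $\Delta_1$. Members of $S^k_{x_1}(lD)$ are general subject to vanishing at $x_1$, so they could still contain $X_2$ in their base locus. I would handle this using the fact that the base locus of $S^k_{x_1}(lD)$ near general points other than $B_1$ is controlled by Definition \ref{defi-for-tigers}(2) applied at $x_1$: $B'^k_{x_1}$ has finitely many components, and $X_2$, being general, avoids those not through $x_1$. Then $\Delta_1$ is klt-small near $X_2$ after taking $N$ large. Should this fail, I would first use Lemma \ref{perturb}-style tie-breaking, keeping the strict budget $\delta_1+\delta_2<3/w$ by choosing each perturbation within $\epsilon$ of $1/w$ plus the lct rescaling. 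The rescaling only decreases coefficients.
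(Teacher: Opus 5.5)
Your argument works, but it handles the one delicate point, compatibility at $X_2$, differently from the paper.

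\textbf{The paper's route.} It takes $\Delta_1,\Delta_2$ from \cite[Lemma 3.4]{Lac23} as black boxes. It scales $\Delta_1+\Delta_2$ by the smaller of the lc thresholds at $\eta_{X_1}$ and $\eta_{X_2}$. A case analysis, using the switch of $X_1,X_2$ allowed in Definition \ref{defi-for-M(t)}, then produces $\Delta'$ with $\delta'<2/w$ such that $X_1$ is a pure and exceptional lc center and $(V,C+\Delta')$ is klt at $\eta_{X_2}$. Only then does it add a fresh $\Delta''$ with $\delta''<1/w$ from the tiger at $x_2$ and tie-break there. The inequality $a(E_1,V,C+\Delta'+\Delta'')\le 0$ is automatic because $\Delta''\ge 0$.

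\textbf{Your route.} You make each $X_i$ a pure and exceptional lc center of its own $\Delta_i$, then show that $X_2$ is not contained in the support of $\Delta_1$. The set ${B'}^k_{x_1}(lD)$ is a proper closed subset varying algebraically with $x_1$, so for a general pair of fibers $X_2\not\subseteq {B'}^k_{x_1}(lD)$. Hence the germ of $(V,C+\Delta_1+\Delta_2)$ at $\eta_{X_2}$ coincides with that of $(V,C+\Delta_2)$. Purity and exceptionality at $X_2$ are therefore preserved exactly, not merely ``to first order'', and the disjointness of the fibers plays no role.

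\textbf{What each buys.} Your route avoids switching and the case analysis, and gives the better constant $\approx 2/w$. The paper's route only has to control lc thresholds at $\eta_{X_2}$, not the supports of Lacini's divisors, at the price of the switch and the constant $3/w$.

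\textbf{Two corrections to your write-up.}

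First, tie-breaking costs an arbitrarily small amount, so $\delta_i\le 1/w+\epsilon$. With your stated bounds $\delta_1,\delta_2<2/w$, the sum is only $<4/w$, which exceeds $t=3/w$. The detour through $\Gamma$ can also be dropped.

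Second, the tie-breaking divisor cannot be a general member of $S^k_{x_1}(lD)$. Such a member has the same multiplicity as $\Delta_1^0$ along every divisorial valuation, so it cannot separate the lc places over $X_1$. A general member of $|mlD|$ through $x_1$ has the opposite problem: it need not contain $X_1$ at all. Since $K_V+C$ need not be big, use the positivity on the Iitaka base as in Lemma \ref{perturb}. That lemma needs a good minimal model, which is available in the application. Choose the perturbing divisor general enough that it also does not contain $X_2$.
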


    \begin{proof}
        Take $X_1$, $X_2$ to be different general fiber of $f$. By \cite[Lemma 3.4]{Lac23}, we see that $f:(V,C) \to T$ satisfies condition $\mathcal{M}_1(1/w)$. We divide into two steps to prove it satisfies condition $\mathcal{M}_2(t)$ for $t=3/w$.
        \bigskip
        
        {\em Step 1.} In this step, we prove that there exists $0 \leq \Delta' \sim_{\bQ} \delta' D$ for some $\delta' \in (0,2/w) \cap \bQ$, such that after possibly switching $X_1$ and $X_2$, $X_1$ is a pure and exceptional lc center of $(V,C +\Delta')$ and it is klt at the generic point of $X_2$.\par
        \bigskip
        For $i=1,2$, take a general $x_i\in X_i$. By \cite[Lemma 3.4]{Lac23}, there exist general $0 \leq \Delta_i \sim_{\bQ} \delta_i D$ for some $\delta_i \in (0,1/w] \cap \bQ$, such that $X_i$ is a pure lc center of $(V,\Delta_i)$. Here we make the following remark: because of the definition of birational family of intrinsic tigers, $X_i$ is exactly the base locus of $S^k_{x_i}(lD)$ for some integers $k,l>0$, so the linear system $S^k_{x_i}(lD)$ is base-point free in a punctured neighborhood of the generic point of $X_i$, for $i=1,2$, thus we can take $\Delta_i$ as above. \par
        Let $\Delta:= \Delta_1 + \Delta_2$ and then define 
        $$c:= \min \{\lct_{\eta_{X_1}}(V,C;\Delta),  \lct_{\eta_{X_2}}(V,C;\Delta)\}.$$
        After possibly switching $X_1$ and $X_2$, we can assume  $X_1$ is a pure lc center of $(V,C + c\Delta)$. Note that $c \in(0, 1) \cap \bQ$, so that $c\Delta \sim_{\bQ} \delta D$ for some $\delta \in (0, 2/w) \cap \bQ$. We will get into one of the following two cases: 
        
        {\bf Case I.} In this case, $(V,C+ c\Delta)$ is klt at the generic point of $X_2$. By Lemma \ref{perturb}, we can take $\Delta'$ to be a small perturbation of $c\Delta$ such that $X_1$ is a pure and exceptional lc center of $(V,C + \Delta')$ and keeps klt at the generic point of $X_2$. Thus, we finish this step. 
        
        {\bf Case II.} In this case, $(V,C + c\Delta)$ is lc but not klt at the generic point of $X_2$. Again by Lemma \ref{perturb}, we take $\Delta'$ to be a sufficiently general small perturbation of $c\Delta$ such that $X_1$ is a pure and exceptional lc center of $(V,C + \Delta')$. If $(V,C + \Delta')$ is klt at the generic point of $X_2$, we are done; else if $(V,C + \Delta')$ is not lc at the generic point of $X_2$, then $\lct_{\eta_{X_2}}(V,C;\Delta')<1$. Replacing $\Delta'$ with $\lct_{\eta_{X_2}}(V,C;\Delta')\cdot \Delta'$ and switching $X_1$ and $X_2$, we get to {Case I} again; else, we get $X_2$ is also a pure and exceptional lc center of $(V,C + \Delta')$ since $\Delta'$ is a general perturbation. But in this case, $f:(V,C) \to T$ has already satisfied condition $\mathcal{M}_2(t)$. Therefore, we finish this step. \par
        \bigskip
        
        {\em Step 2.} In this step, we finish the proof to show that $f:(V,C) \to T$ satisfies condition $\mathcal{M}_2(t)$. \par

        By Step 1, we denote the unique lc place of $(V,C + \Delta')$ lying over $X_1$ by $E_1$. Similarly, since the linear system $S^k_{x_2}(lD)$ is base-point free in a punctured neighborhood of the generic point of $X_2$, we can take general $0 \leq \Delta'' \sim_{\bQ} \delta'' D$ for some $\delta'' \in (0,1/w)\cap \bQ$, such that $X_2$ is a pure lc center of $(V, C + \Delta' + \Delta'')$. \par
        
        By Lemma \ref{perturb}, replacing $\Delta''$ with its small perturbation, we can assume that $X_2$ is a pure and exceptional lc center of $(V,C + \Delta' + \Delta'')$ and $a(E_1,V,C+\Delta'+\Delta'') \leq 0$. We define $\Delta''' = \Delta' + \Delta''$ and we see $\Delta''' \sim_{\bQ} \delta''' D$ for some $\delta''' \in (0,3/w)$. Therefore, we get $f:(V,C) \to T$ satisfies the condition $\mathcal{M}_2(t)$ for $t=3/w$.   
    \end{proof}

    Next theorem constructs a fibration structure for pairs with large Iitaka volumes. The idea is firstly proposed in \cite[Theorem 6.8]{CJ17} and improved in \cite{Lac23} and \cite{Wang25}. Here, we provide a stronger version. 
    \begin{thm}{\label{construct-fibration}}
        Let $n \geq d > 1$ be integers. Let $\lambda: \bZ_{>0}  \times \bR_{>0} \to \bR_{>0}$ be a fixed function. Then, there exist $v_{d-1} \geq \cdots \geq v_1 > 0$ and $v > 0$ such that the following holds. Assume that
        \begin{itemize}
            \item[(1)] $(V,C)$ is a projective klt $\bQ$-pair that has a good minimal model;
            \item[(2)] $K_V + C$ has Iitaka dimension $d$ and $\ivol(K_V + C) > v$.
        \end{itemize}
        Then, we will get either
        \begin{itemize}
            \item $|m'( K_V + C)|$ induces an Iitaka fibration, where $m'=\max\{m_0,2\}$ and $m_0>0$ is the minimal integer such that $m_0C$ is integral; or 
            \item  after replacing $(V,C)$ with a crepant pair, there is an integer $k \in(0,d)$ and a fibration $f:(V,C) \to T$ such that for a general fiber $X$, $\dim X = n - d + k$, and $f: (V,C) \to T$ satisfies condition $\mathcal{M}_2(\lambda(k,v_k))$ and $\ivol(K_X + C|_X) \leq v_k$.
        \end{itemize}
    \end{thm}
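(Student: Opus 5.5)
We will reduce to the case where $K_V+C$ is big on the base of its Iitaka fibration and then run the tiger construction of \cite{Lac23,Wang25} on that base. Let $(V_0,C_0)$ be a good minimal model and $h_0:V_0\to Y$ the contraction defined by $K_{V_0}+C_0$, with $\dim Y=d$. By adjunction (Construction \ref{Adjuntion-for fiberspace}) we write $K_{V_0}+C_0\sim_{\bQ} h_0^*(K_Y+B_Y+M_Y)$. Put $L:=K_Y+B_Y+M_Y$; it is ample on $Y$ with $\vol(L)=\ivol(K_V+C)>v$. Using Lemma \ref{MMP-inv-for-C(t)}, we may replace $(V,C)$ by a crepant pair so that $V\to V_0$ is a morphism and $V$ is log smooth. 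All $\bQ$-divisors in $|K_V+C|_{\bQ}$ are then of the form $h^*D+\delta E$ with $E$ the fixed exceptional part. Thus constructing divisors on $V$ is equivalent to constructing divisors $D\sim_{\bQ}\delta L$ on $Y$.

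The core is a descending induction on the dimension of the tiger locus on $Y$. We will define the constants $v_1\le\dots\le v_{d-1}$ and $v$ backwards from $\lambda$. Since $\vol(L)>v$ is large, a dimension count of sections vanishing to high order at a general point $y$ yields divisors in $|lL|$ of multiplicity $>k$ at $y$ with weight $w\sim \vol(L)^{1/d}$, which is huge. Let $B^k_y(lL)$ be the associated base component, of dimension $k'=d(lL,k)$.

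If $k'=0$ at general points, then $L$ is potentially birational with small coefficient (Definition \ref{defi-for-p-birat}). We then apply Theorem \ref{app-for-kol-inj} with $L_V:=m'(K_V+C)$. Here $L_V-(K_V+C)=(m'-1)(K_V+C)$ is a pullback of $(m'-1)L$, which is potentially birational since $m'\ge 2$ and the weight is large. Hence $|m'(K_V+C)|$ separates general fibres of $h$, i.e.\ it induces the Iitaka fibration. This gives the first alternative.

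Otherwise $0<k'<d$. Following \cite[Section 3]{Lac23}, the loci $B^k_y$ form a birational family of intrinsic tigers $Y'\to T$ of weight $w$ relative to $L$. We pull this back to a fibration $f:V\to T$ whose general fibre $X=h^{-1}(B^k_y)$ has dimension $n-d+k'$. Applying Lemma \ref{property-for-tigers} on a log smooth model (and transporting through $h$ via Lemma \ref{klt-property-for-adjunction}), $f$ satisfies $\mathcal{M}_2(3/w)$. It remains to arrange $3/w\le\lambda(k',v_{k'})$ and $\ivol(K_X+C|_X)\le v_{k'}$. If the Iitaka volume of the fibre, roughly $\vol(L|_{B^k_y})$, exceeds $v_{k'}$, we repeat the construction inside the fibre, using $L|_{B}$ with large volume, to produce tigers of strictly smaller dimension. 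This iteration terminates after at most $d$ steps. The constants are chosen inductively: $v_1$ is arbitrary, $v_{k}$ is large in terms of $v_{k-1}$ and $\lambda$, and $v$ is large in terms of $v_{d-1}$, so that at each stage the weight satisfies $w\ge 3/\lambda(k,v_k)$.

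The main obstacle will be this iteration. We must show that when $\vol(L|_B)>v_{k'}$, the sub-tigers built on $B$ extend to divisors on all of $Y$ with controlled weight. We must also show that the weight of the new tigers still dominates $3/\lambda$. We would follow \cite[Theorem 6.8]{CJ17} and \cite{Wang25} here, combining the old divisor, which cuts out $B$ with coefficient $\le 1/w$, with the new ones, and using the lower bound $\vol(L|_B)^{1/k'}$ to bound their coefficients.
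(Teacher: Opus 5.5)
Your proposal follows essentially the same route as the paper. You pass to the good minimal model and work with the ample $\bQ$-divisor on the base $Y$ of the Iitaka fibration (your $L$, the paper's $D$, with $\vol(L)=\ivol(K_V+C)$). You then use the Lacini--Wang dichotomy: either $L$ is potentially birational, or there is a birational family of intrinsic tigers of large weight whose fibers have bounded volume. The first case is finished by Theorem~\ref{app-for-kol-inj} and the second by Lemma~\ref{property-for-tigers}. The paper does not carry out the iteration you sketch; it quotes the whole dichotomy, constants included, from the proofs of \cite[Theorem 1.2]{Wang25} and \cite[Lemma 3.6]{Lac23}.

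Three small corrections.

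\emph{The choice of $v_1$.} $v_1$ cannot be arbitrary. At the curve stage, $\deg(L|_B)>v_1$ must force potential birationality of $L$ with total coefficient less than $1$. Since a point of a curve is an lc center only with coefficient $1$, $v_1$ has to be chosen sufficiently large depending on $d$. Then $v_2,\dots,v_{d-1}$ and $v$ are taken large in terms of the smaller $v_j$ and $\lambda(j,v_j)$.

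\emph{Transferring the tiger family from $Y$ to $V$.} This does not go through Lemma~\ref{klt-property-for-adjunction}. For divisible $l$ one has $|l(K_V+C)|=h^*|lL|+lE$ with $E$ fixed, and $h$ is smooth over a general point of $Y$. So one checks directly that $f=g\circ h$ is again a birational family of intrinsic tigers of the same weight relative to $K_V+C$: base loci pull back, and both the codimension and the multiplicity along them are unchanged. This is exactly what the paper asserts before applying Lemma~\ref{property-for-tigers} on the smooth model.

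\emph{Where to apply Theorem~\ref{app-for-kol-inj}.} It needs $K_V+C$ semi-ample, so apply it on $V_0$, where $K_{V_0}+C_0$ is semi-ample, rather than on your log smooth crepant model.
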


    \begin{proof}
        The proof is very similar to \cite[Lemma 3.8]{Wang25}. For the convenience of readers, here we give a complete proof. \par
        We replace $(V,C)$ with its good minimal model. Let $h : V \to Y$ be the contraction defined by semi-ample divisor $K_V + C$ and we can write $K_V + C \sim_{\bQ} h^*D$ for some ample $\bQ$-divisor $D>0$ on $Y$ such that $\vol(D) = \ivol(K_V + C)$. By the proof of \cite[Theorem 1.2]{Wang25} and \cite[Lemma 3.6]{Lac23}, there exist $v_{d-1} \geq \cdots \geq v_1 > 0$ and $v > 0$ such that either $D$ is potentially  birational, or after replacing $Y$ with a higher model, We will get a fibration $g: Y \to T$, such that
        \begin{itemize}
            \item[(1)] for a general fiber $F$ of $g: Y \to T$, $\dim F = k$ for some $1\leq k\leq d-1$ and $\vol(D|_F) \leq v_k$; and 
            \item[(2)] $g: Y \to T$ is a birational family of intrinsic tigers of weight $3/t_k$ for some $0<t_k \leq \lambda(k,v_k)$. 
        \end{itemize}
        
        In the first case when $D$ is potentially birational, then $|m'(K_V + C)|$ gives an Iitaka fibration by Theorem \ref{app-for-kol-inj}. \par
        
        In the second case, replacing $V$ with a higher smooth model, we can assume $h:V \to Y$ is a morphism and define $f:= g \circ h: V \to T$ be the composition. We see that $f : V \to T$ is a birational family of intrinsic tigers of weight $3/t_k$ relative to $K_V + C$. Let $X$ be a general fiber of $f: V \to T$. Then, $\ivol(K_X + C|_X) = \vol(D|_F) \leq v_k$. By Lemma \ref{property-for-tigers}, we know $f:(V,C) \to T$ satisfies condition $\mathcal{M}_2(t_k)$. We finish the proof. 
    \end{proof}

\section{Restriction for Negative parts}\label{Res-for-N-parts}

The goal of this section is to prove Theorem \ref{main-Res-for-Negative-parts} on the behavior of the restriction of the Zariski-Negative part to a general fiber. It will be used to prove the extension theorem in Section 5.

\begin{thm}\label{Res-for-Negative-parts}{\rm (= Theorem \ref{main-Res-for-Negative-parts})}
    Let $d>0$ be an integer, $\Phi \subseteq[0,1]$ be a DCC set, $ \epsilon, u,v>0$ be real numbers. Then, there exists a real number $t>0$ that satisfies the following. Assume that 
    \begin{itemize}
        \item[(1)] $f:(V,C) \to T$ admits a klt $(d,\Phi,{\leq}u, {\leq}v)$-SMM-type fibration such that $(X,C|_X)$ is $\epsilon$-lc for a general fiber $X$; 
        \item[(2)] $f:(V,C) \to T$ satisfies the condition $\mathcal{M}_2(t)$.  
    \end{itemize}
    Then, for a general fiber $X$, we have 
    $$N_\sigma(K_V + C)|_X = N_
    \sigma(K_X + C|_X).$$
\end{thm}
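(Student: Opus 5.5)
The plan is to prove the two inequalities between $N_\sigma(K_V+C)|_X$ and $N_\sigma(K_X+C|_X)$ separately. The easy one is
$$N_\sigma(K_V+C)|_X \geq N_\sigma(K_X+C|_X).$$
I would first replace $(V,C)$ by a crepant log smooth model as in Proposition \ref{existence-of-very-log-resolution}, so that $N_\sigma$ makes sense on a smooth variety. Conditions (1) and (2) are preserved: condition $\mathcal{M}_2(t)$ by Lemma \ref{MMP-inv-for-C(t)}, and condition (1) by Lemma \ref{MMP-inv-for-SMM-type}. The extra exceptional divisor $F$ is then added to both sides, exactly as in the proof of Lemma \ref{Zariski-negative-in-family}. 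On such a model, $\sigma_\Gamma(D)\geq \sigma_\Gamma(D;V/T)$ holds for every prime $\Gamma$, since the absolute infimum runs over a smaller set of divisors. Restricting to a general fibre and applying Lemma \ref{Zariski-negative-in-family} gives the displayed inequality. Vertical components of $N_\sigma(K_V+C)$ do not meet a general $X$, so the whole problem is to show that for every horizontal prime $P$ we have $\sigma_P(K_V+C)\leq\sigma_P(K_V+C;V/T)$.

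For the reverse inequality, the aim is to show that sections of $m(K_X+C|_X)+(\text{small ample})$ extend to $V$ with no extra vanishing along horizontal divisors. Concretely, fix a general fibre $X$ and use condition $\mathcal{M}_2(t)$ together with Lemma \ref{perturb}. This gives $0\le\Delta\sim_\bQ\delta(K_V+C)$ with $\delta<t$ such that $X$ is a pure and exceptional lc center of $(V,C+\Delta)$ with unique lc place $E$.

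I would then run a Hacon--McKernan/Koll\'ar-type extension argument; this is where Theorem \ref{kollars-injectivity} enters, in the same shape as in Theorem \ref{app-for-kol-inj}. The argument extends sections from $E$, hence from $X$, of
$$\rounddown{m(K_V+C)+A}|_X$$
for $A$ ample, twisted by the multiplier-type ideal of $\Delta|_X$. The role of the second divisor in $\mathcal{M}_2(t)$ is to kill the second fibre, so that the lc place $E$ is isolated and the vanishing/injectivity applies. The outcome is that the extension is surjective as soon as the restricted pair
$$(X,\,C|_X+\Delta|_X)$$
is klt in a neighbourhood of the generic points of the horizontal components under consideration. Granting this, we get $\sigma_P(K_V+C+\tfrac1m A)\le \sigma_{P|_X}(K_X+C|_X+\tfrac1mA|_X)$ for each horizontal $P$. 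Letting $m\to\infty$ gives the desired inequality.

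The main obstacle is choosing $t$ uniformly so that $(X,C|_X+\Delta|_X)$ is klt. Since $\Delta|_X\sim_\bQ \delta(K_X+C|_X)$, I would pass to the stable minimal model of $X$, where $K_X+C|_X\sim_\bR h^*(K_Z+B_Z+M_Z)$. There, Theorem \ref{bounded-base-for-smm} gives uniform $r,\eta$: a very ample $A_Z$ with $A_Z^{\dim Z}\le r$, with $A_Z-B_Z$ and $A_Z-M_Z$ pseudo-effective, and with $(Z,B_Z+\overline{\mathbf{M}}_Z)$ being $\eta$-glc. After enlarging $r$, $A_Z-K_Z$ is pseudo-effective too, so a fixed multiple $3A_Z$ dominates $K_Z+B_Z+M_Z$.

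The restricted divisor descends, up to the fixed part $N_\sigma$, to a divisor $D_Z$ on $Z$ whose class is dominated by $3\delta A_Z$. Theorem \ref{bounded-glct}, applied with $d,r,\eta$, then gives a $t_0$ such that $(Z,B_Z+t_0D_Z+\overline{\mathbf{M}}_Z)$ is gklt. Lemma \ref{klt-property-for-adjunction} then lifts the klt property back to $X$. Taking $t$ proportional to $t_0$ finishes the argument.

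The delicate points are two. First, the $N_\sigma$ part of $\Delta|_X$ must be handled separately: it is exactly what we compare, and it does not affect the horizontal generic points once we use the $\epsilon$-lc hypothesis. Second, the weak-lc-model discrepancy between $X$ and its stable minimal model must be controlled. Both rely on hypothesis (1), which supplies the $\epsilon$-lc bound used as input to Theorem \ref{bounded-base-for-smm}.
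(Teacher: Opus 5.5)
Your easy inequality $N_\sigma(K_V+C)|_X\geq N_\sigma(K_X+C|_X)$ is fine. Your uniform choice of $t$ is also fine; it is essentially the paper's Theorem~\ref{lc-property}. The gap is in the reverse inequality: it rests on an extension statement that you assert rather than prove, and the route you indicate is circular.

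To use Theorem~\ref{kollars-injectivity} (or Kawamata--Viehweg vanishing) on a resolution $\nu\colon W\to V$ with lc place $E$ over $X$, you need $\rounddown{m\nu^*(K_V+C)+\nu^*A}-E-K_W-L$ to be semi-ample, or nef and big, up to a fractional part. Since $K_V+C$ is only pseudo-effective, this forces $L$ to contain $(m-1-\delta)N$ for some $N$ with $\nu^*(K_V+C)-N$ nef. Then $N\geq N_\sigma(\nu^*(K_V+C))$, up to the $A/m$ twist, which disappears as $m\to\infty$.

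Consequently $L|_E$ contains $(m-1-\delta)N_\sigma(\nu^*(K_V+C))|_E$. The sections you extend therefore already vanish along $N_\sigma(K_V+C)|_X$, which is precisely the quantity you want to bound. This is exactly how the paper's Theorem~\ref{extension-for-one-fiber-DCC} is built, and there the vanishing $N_\sigma(\nu^*(K_V+C))|_E=0$ is supplied by the very statement you are trying to prove.

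Klt-ness of the adjunction pair on $X$ does not rescue this. (Note that $\Delta|_X$ itself is undefined, since $X\subseteq\mathrm{Supp}\,\Delta$.) It controls only the multiplier ideal coming from $\Delta$, not the asymptotic base locus of $K_V+C$ along horizontal divisors. A non-circular extension would need a Hacon--McKernan/Siu-type argument, which requires a fixed ample part and a quite different proof; nothing in your sketch provides it.

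A symptom of the problem is that you never really use the second fibre in $\mathcal{M}_2(t)$. Isolating the lc place over one fibre is what Theorem~\ref{lc-property} does, not the second fibre.

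The missing idea is the paper's Theorem~\ref{Vertical-for-ext-locus}. First run an MMP$/T$ to a weak SMM-type fibration; this preserves $\mathcal{M}_2(t)$ by Lemma~\ref{MMP-inv-for-C(t)}. On that model, every $(K_V+C)$-negative extremal contraction $\phi$ has $f$-vertical exceptional locus. The argument goes as follows.

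\begin{itemize}
\item[(a)] If not, a general positive-dimensional fibre $S$ of $\phi$ meets two general fibres $X_1,X_2$ of $f$.
\item[(b)] Condition $\mathcal{M}_2(t)$ gives $\Delta\sim_{\bQ}\delta(K_V+C)$ with $(V,C+\Delta)$ not klt along $X_1$, while $X_2$ is an lc centre that is isolated in the non-klt locus by Theorem~\ref{lc-property}.
\item[(c)] Since $-(K_V+C+\Delta)\sim_{\bQ}-(1+\delta)(K_V+C)$ is $\phi$-ample, this contradicts Koll\'ar--Shokurov connectedness along $S$.
\end{itemize}

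Hence the components of $N_\sigma$ on the MMP model are vertical. The negativity lemma together with Lemma~\ref{Zariski-negative-in-family} then gives the equality. So the two-fibre condition enters through connectedness, not through vanishing, and no extension theorem is needed at this stage.
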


If we strengthen the condition ``$(d,\Phi,{\leq}u, {\leq}v)$" to ``$(d,\Phi,{\leq}u, {=}v)$", but weaken the condition``$\epsilon$-lc" to ``klt" in the above theorem, we will get following result.

\begin{cor}\label{Res-for Negative-parts-2}
        Let $d>0$ be an integer, $\Phi \subseteq[0,1]$ be a DCC set, $u,v>0$ be real numbers. Then, there exists a real number $t>0$ that satisfies the following. Assume that 
    \begin{itemize}
        \item[(1)]  $f:(V,C) \to T$ admits a klt $(d,\Phi,{\leq}u, {=}v)$-SMM-type fibration; 
        \item[(2)] $f:(V,C) \to T$ satisfies the condition $\mathcal{M}_2(t)$. 
    \end{itemize}
    Then, for a general fiber $X$, we have 
    $$N_\sigma(K_V + C)|_X = N_
    \sigma(K_X + C|_X).$$
\end{cor}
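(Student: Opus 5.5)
The plan is to deduce the corollary directly from Theorem \ref{Res-for-Negative-parts}. The key input is Lemma \ref{DCC-fixed-volume-imply-epsilon-lc}: fixing the Iitaka volume forces a uniform lower bound on the singularities. Given $d,\Phi,u,v$, Lemma \ref{DCC-fixed-volume-imply-epsilon-lc} produces $\epsilon=\epsilon(d,\Phi,u,v)>0$. Every klt $(d,\Phi,{\leq}u,{=}v)$-stable minimal model $((X',B'),A')$ then has $(X',B')$ $\epsilon$-lc. We then apply Theorem \ref{Res-for-Negative-parts} with the data $d,\Phi,\epsilon,u,v$; this is legitimate because a ${=}v$ model is in particular a ${\leq}v$ model. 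That theorem supplies $t=t(d,\Phi,\epsilon,u,v)$, and this $t$ depends only on $d,\Phi,u,v$, which is the $t$ we take.

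The one point needing care is hypothesis (1) of Theorem \ref{Res-for-Negative-parts}. It asks that $(X,C|_X)$ itself be $\epsilon$-lc for a general fiber $X$, whereas Lemma \ref{DCC-fixed-volume-imply-epsilon-lc} only controls the stable minimal model $(X',B')$, which is a weak lc model of $(X,C|_X)$. I would first try to avoid this issue by reducing to the model. By Lemma \ref{MMP-inv-for-SMM-type}, we can run an MMP$/T$ on $K_V+C$ to reach $(V',C')$ such that $(V',C')\to T$ is a weak klt $(d,\Phi,{\leq}u,{=}v)$-SMM-type fibration. Then each general fiber $(X',C'|_{X'})$ is crepant to a klt $(d,\Phi,{\leq}u,{=}v)$-stable minimal model. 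Crepant pairs share log discrepancies, and the fiber is a genuine pair with $C'|_{X'}\ge 0$, so it is $\epsilon$-lc. Moreover $(V',C')\to T$ still admits the SMM-type fibration, since its fibers are their own weak lc models.

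Next I would transfer the remaining data between $(V,C)$ and $(V',C')$. The $\mathcal{M}_2(t)$ condition moves by Lemma \ref{MMP-inv-for-C(t)}, since the MMP preserves the $\bQ$-linear systems. For the negative parts, write $p^*(K_V+C)=q^*(K_{V'}+C')+E$ with $E\ge0$ $q$-exceptional. Then $N_\sigma$ of $K_V+C$ corresponds to $N_\sigma$ of $K_{V'}+C'$ plus $E$, and the same relation holds after restricting to a general fiber, where $E|_{X''}$ is exactly the discrepancy between $(X,C|_X)$ and $(X',C'|_{X'})$. Applying Theorem \ref{Res-for-Negative-parts} to $(V',C')$ would then give the identity for $(V,C)$.

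I expect the main obstacle to be this transfer of the $N_\sigma$ identity through the MMP. If it is delicate, there is a fallback. The proof of Theorem \ref{Res-for-Negative-parts} presumably uses $\epsilon$-lc only through Theorem \ref{bounded-base-for-smm}, that is, through the bounded base and the $\eta$-glc property of $(Z,B_Z+\overline{\mathbf{M}}_Z)$. Both of these depend only on the stable minimal model, so one can observe that the hypothesis is really only needed on $(X',B')$, where Lemma \ref{DCC-fixed-volume-imply-epsilon-lc} provides it.
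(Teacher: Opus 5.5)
Your proposal is correct and follows the paper's proof, which combines Theorem \ref{Res-for-Negative-parts} with Lemma \ref{DCC-fixed-volume-imply-epsilon-lc} so that $\epsilon$, and hence $t$, depends only on $d,\Phi,u,v$. The lemma controls only the stable minimal model, not $(X,C|_X)$ itself, and the paper's one-line proof leaves this point implicit. Your extra step fills it legitimately: you pass to the MMP output $(V',C')$, whose general fiber is crepant to the stable minimal model and hence $\epsilon$-lc, and you transfer the $N_\sigma$ identity back via $p^*(K_V+C)=q^*(K_{V'}+C')+E$. This mirrors Steps 1 and 3 of the proof of Theorem \ref{Res-for-Negative-parts}, which in fact uses the $\epsilon$-lc hypothesis only on that model.
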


Before proving the theorem above, we establish a uniform boundedness result for singularities near a maximal lc centre. 

\begin{thm}\label{lc-property}
    Let $d > 0$ be an integer, $\Phi \subseteq[0,1]$ be a DCC set, $u,v,\epsilon>0$ be real numbers. Then, there exists a positive number $t$ that satisfies the following. Assume that
    \begin{itemize}
        \item $f: (V,C) \to T$ is a weak klt $(d,\Phi,{\leq}u, {\leq}v)$-SMM-type fibration such that $(X,C|_X)$ is $\epsilon$-lc for a general fiber $X$; 
        \item $f: (V,C) \to T$ satisfies condition $\mathcal{M}_1(t)$, so that we can take $0 \leq \Delta \sim_{\bQ} \delta(K_V + C)$ with $0 < \delta < t$, such that $X$ is a pure and exceptional  lc center of $(V,C + \Delta)$ by {\rm Lemma \ref{perturb}};
        \item by doing adjunction, we uniquely write 
        $$(K_V + C + \Delta)|_X = K_X + C|_X + B_X + {\mathbf{M}}_X,$$
        and get a generalized pair $(X,C|_X + B_X + \mathbf{M}_X)$ as in {\rm Lemma \ref{basic-for-adjunction-lc-centers}};
        \item $\Delta'_X \geq 0$ is an $\bR$-Cartier $\bR$-divisor on $X$ with the property that $t(K_X + C|_X) - \Delta'_X$ is pseudo-effective.
    \end{itemize}
    Then, the generalized pair $(X,C|_X + B_X +\Delta'_X + \mathbf{M}_X)$ is gklt.\par
    In particular, if $\Delta' \geq 0$ is an $\bR$-Cartier $\bR$-divisor on $V$ with the property that $t(K_V + C) - \Delta'$ is pseudo-effective and that the support of $\Delta'$ does not contain $X$, then $(V,C +\Delta + \Delta')$ is klt in a punctured neighborhood of $X$.
\end{thm}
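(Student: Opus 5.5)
The plan is to push everything down to the base $Z$ of the stable minimal model of a general fiber. There we will apply the uniform lct bound of Theorem \ref{bounded-glct} to the generalized pair $(Z,B_Z+\overline{\mathbf{M}}_Z)$. Finally we lift back via Lemma \ref{klt-property-for-adjunction}.

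\emph{Setup on the fiber.} Since $f$ is a weak SMM-type fibration, for a general fiber $X$ the divisor $K_X+C|_X$ is nef. It is crepant to a klt $(d,\Phi,{\leq}u,{\leq}v)$-stable minimal model, so it is semi-ample and defines $h\colon X\to Z$. We write
$$K_X+C|_X\sim_{\bR} h^*(K_Z+B_Z+M_Z).$$
By Theorem \ref{bounded-base-for-smm}, applied to the stable minimal model crepant to $(X,C|_X)$ (this has the same base and the same $\epsilon$-lc property), we get $r,\eta$ depending only on $d,\Phi,u,v,\epsilon$. These give a very ample $A_Z$ with $A_Z^{\dim Z}\le r$, such that $A_Z-B_Z$ and $A_Z-M_Z$ are pseudo-effective and $(Z,B_Z+\overline{\mathbf{M}}_Z)$ is $\eta$-glc. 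Also $K_Z+B_Z+M_Z$ is bounded by a multiple of $A_Z$ in a bounded family, say $cA_Z-(K_Z+B_Z+M_Z)$ is pseudo-effective with $c$ uniform.

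\emph{Descending the extra boundary.} The adjunction data $B_X+\mathbf{M}_X$ comes from $(K_V+C+\Delta)|_X$ with $\Delta\sim_\bQ\delta(K_V+C)$. Hence $B_X+\mathbf{M}_X\sim_\bR \delta(K_X+C|_X)$, which is the pullback of $\delta(K_Z+B_Z+M_Z)$. Next, $t(K_X+C|_X)-\Delta'_X$ is pseudo-effective and $K_X+C|_X$ is pulled back from $Z$. From this we want to conclude that $\Delta'_X$ and $B_X$ have no horizontal part over $Z$; restricting to a general fiber of $h$ gives a pseudo-effective class $-\Delta'_X|_F$ on a klt-trivial fiber, which must vanish. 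Thus $(X,C|_X+B_X+\Delta'_X+\mathbf{M}_X)\to Z$ is a generalized klt-trivial fibration. Doing adjunction as in Construction \ref{Adjuntion-for fiberspace}, its discriminant/moduli part on $Z$ has the form $(Z,B_Z+D_Z+\overline{\mathbf{M}}_Z+\overline{\mathbf{N}}_Z)$. Here $D_Z\ge0$ and $D_Z+\overline{\mathbf{N}}_Z\sim_\bR$ an effective class bounded by $(\delta+t)(K_Z+B_Z+M_Z)$. This is pseudo-effective-bounded by $(\delta+t)cA_Z\le 2tcA_Z$.

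\emph{Applying the lct bound.} We apply Theorem \ref{bounded-glct} to the $\eta$-glc pair with the very ample divisor $3A_Z$ (or a fixed multiple, with the volume bound adjusted). This produces $t_0>0$ depending only on $d,r,\eta$, such that adding $t_0$ times any such $D+\mathbf{N}$ keeps the pair gklt. We choose $t$ with $2tc\le t_0$. Then $(Z,B_Z+D_Z+\overline{\mathbf{M}}_Z+\overline{\mathbf{N}}_Z)$ is gklt, and Lemma \ref{klt-property-for-adjunction} gives that $(X,C|_X+B_X+\Delta'_X+\mathbf{M}_X)$ is gklt. The ``in particular'' part follows from inversion of adjunction for the exceptional lc center $X$ with unique lc place. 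If $(V,C+\Delta+\Delta')$ were non-klt at points of a punctured neighborhood of $X$, then by connectedness and Kawamata subadjunction its restriction would produce a non-gklt pair on $X$ with $\Delta'_X=\Delta'|_X$. This applies because $t(K_X+C|_X)-\Delta'|_X$ is pseudo-effective for general $X$ (restriction of a pseudo-effective class to a general fiber is pseudo-effective).

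\emph{Expected obstacle.} The hardest step will be making the descent precise. First, we must identify the discriminant of $B_X+\Delta'_X$ on $Z$ as an \emph{effective} divisor plus a b-nef part, so that it fits hypothesis (3) of Theorem \ref{bounded-glct}. Second, we must control the b-nef part coming from $\mathbf{M}_X$, since it is not a priori pulled back from $Z$. My first attempt is to use that the sum of all extra data is $\bR$-linearly a pullback $h^*$ of an effective class on $Z$, and to apply Lemma \ref{Existence-for-generalized} to the combined fibration. The positivity of $B_Z$-type contributions is then guaranteed as in Lemma \ref{basic-for-adjunction-lc-centers}(2).
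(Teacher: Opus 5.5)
Your architecture matches the paper's: bound the base $Z$ with Theorem~\ref{bounded-base-for-smm}, apply Theorem~\ref{bounded-glct} on $Z$, lift with Lemma~\ref{klt-property-for-adjunction}, and get the last assertion by inversion of adjunction \cite[Theorem 1.1]{FH23}. Kawamata subadjunction goes the wrong way for that last step; the implication you need is inversion of adjunction.

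\textbf{Where the proposal breaks: the descent step.} There are two problems.

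First, you only show that $\Delta'_X$ is vertical over $Z$. Verticality does not give $K_X+C|_X+B_X+\Delta'_X+\mathbf{M}_X\sim_{\bR,h}0$. For example, a small multiple of one component of a reducible fiber $h^*Q$ satisfies the hypothesis on $\Delta'_X$ but is not $\bR$-linearly trivial over $Z$. Divisors mapping to subsets of codimension $\ge 2$ in $Z$ fail in the same way. So the enlarged pair is not a generalized klt-trivial fibration, and neither the adjunction construction nor Lemma~\ref{klt-property-for-adjunction} applies to it.

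Second, hypothesis (3) of Theorem~\ref{bounded-glct} requires your $\overline{\mathbf{N}}$ to be b-nef. Lemma~\ref{Existence-for-generalized} only gives b-nefness of the total moduli part $\overline{\mathbf{M}}+\overline{\mathbf{N}}$, not of the difference. Your ``first attempt'' does not address this, so the obstacle you flagged remains open.

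\textbf{How the paper avoids this: domination.} The classes $h^*(3tA_Z)-\Delta'_X$ and $h^*(3tA_Z)-B_X$ are pseudo-effective; for the second, use $B_X+\mathbf{M}_X\sim_{\bQ}\delta(K_X+C|_X)$ and pseudo-effectivity of $\mathbf{M}_X$. Then \cite[Lemma 2.7]{CL24} gives effective $B_Z,\Delta'_Z\sim_{\bR}6tA_Z$ with $h^*B_Z\ge B_X$ and $h^*\Delta'_Z\ge \Delta'_X$. The pair $(X,C|_X+h^*(B_Z+\Delta'_Z))\to Z$ is genuinely klt-trivial, with discriminant $C_Z+B_Z+\Delta'_Z$ and unchanged moduli part $\overline{\mathbf{M}}$. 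So Theorem~\ref{bounded-glct} only ever sees the effective divisor $D=(B_Z+\Delta'_Z)/(12t)$. This is why $t$ is chosen so that $(Z,C_Z+12tD+\overline{\mathbf{M}}_Z)$ is gklt whenever $A_Z-D$ is pseudo-effective.

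\textbf{Your worry about $\mathbf{M}_X$ also applies to the paper's route.} On a model $\pi\colon X'\to X$ where $\mathbf{M}$ descends, the generalized boundary exceeds $\pi^*B_X$ by $\pi^*\mathbf{M}_X-\mathbf{M}_{X'}\ge 0$. The divisor $h^*B_Z$ does not control this term, and the paper's Step 2 leaves it implicit.

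One way to settle it is as follows.
\begin{enumerate}
\item Write $\pi^*(B_X+\mathbf{M}_X)=B'_{X'}+\mathbf{M}_{X'}$.
\item Restrict to a general fiber $F'$ of $f'\colon X'\to Z'$. The negativity lemma then gives $B'_{X'}|_{F'}=0$ and $\mathbf{M}_{X'}|_{F'}\equiv 0$.
\item Since $-B'_{X'}\equiv_{Z'}\mathbf{M}_{X'}$ is nef over $Z'$, Zariski's lemma on a model with $f'$ equidimensional and $Z'$ smooth gives $B'_{X'}=f'^*P_{Z'}$.
\item Hence $\mathbf{M}_{X'}\sim_{\bR}f'^*N_{Z'}$ with $N_{Z'}$ nef.
\end{enumerate}
This is exactly the b-nef $\overline{\mathbf{N}}$ you need, with $\overline{\mathbf{N}}_Z$ bounded by $\delta(K_Z+C_Z+\overline{\mathbf{M}}_Z)$. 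Combined with the domination of $\Delta'_X$, it makes Theorem~\ref{bounded-glct} applicable with $D$ effective and $\mathbf{N}$ b-nef.
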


\begin{proof}
    {\em Step 1.} In this step, we explore $(X,C|_X)$ and construct the number $t$. \par 
    By assumption, there is a birational model $(X,C|_X) \dashrightarrow (\overline{X}, C_{\overline{X}})$, such that $(X,C|_X)$ and $(\overline{X}, C_{\overline{X}})$ are crepant and that
    \begin{itemize}
        \item[(1)] $(\overline{X}, C_{\overline{X}})$ is semi-ample and gives a contraction $\overline{h}: \overline{X} \to Z$;
        \item[(2)] $A_{\overline{X}}$ is an integral divisor on $X$ that is big and semi-ample over $Z$ and $0 \leq \vol(A_{\overline{X}}|_F) \leq u$ holds for a general fiber $F$ of $\overline{h} : \overline{X} \to Z$;
        \item[(3)] $\ivol(K_{\overline{X}} + C_{\overline{X}}) \leq v$ holds. 
    \end{itemize}
    
    We do adjunction to write
    $$K_{\overline{X}} + C_{\overline{X}} \sim_\bQ h^*(K_{Z} + C_Z + \overline{\mathbf{M}}_Z),$$
    and get a generalized pair $(Z,C_Z + \overline{\mathbf{M}}_Z)$. Since $(X,C|_X)$ is crepant to $(\overline{X},C_{\overline{X}})$, the induced map $h: X \dashrightarrow Z$ is a morphism and is also a klt-trivial fibration. If we do adjuntion to $h : X \to Z$, we will get the same generalized pair. By Lemma \ref{bounded-base-for-smm}, there exist $r,\eta$ that depend on $d,\Phi,u,v,\epsilon$, and a very ample divisor $A_Z$ on $Z$ with ${A_Z}^{\dim Z} \leq r$ and $A_Z - K_Z$ $A_{Z} - C_Z$, $A_Z - \overline{\mathbf{M}}_Z$ are all pseudo-effective and $(Z,C_Z + \overline{\mathbf{M}}_Z)$ is generalized $\eta$-lc. By Lemma \ref{bounded-glct}, there is a number $t > 0$ such that for any $\bR$-Cartier $\bR$-divisor $D \geq 0$ with $A_Z - D$ being pseudo-effective, the generalized pair $(Z,C_Z + 12tD + \overline{\mathbf{M}}_Z)$ is gklt. \par
    \bigskip

    {\em Step 2.} In this step, we verify that $t$ satisfies all the required conditions. \par 
    By construction, we see that $B_X + \mathbf{M}_X \sim_{\bQ} \delta(K_X + C|_X)$ so that $h^*(3tA_Z) - (B_X + \mathbf{M}_X)$ is pseudo-effective. Also, we see $B_X \geq 0 $ by Lemma \ref{basic-for-adjunction-lc-centers}. Since $t(K_X + C|_X) - \Delta'_X$ is pseudo-effective, we get $h^*(3tA_Z) - \Delta '_X$ is pseudo-effective. Then, by \cite[Lemma 2.7]{CL24}, there is a $0 \leq B_{Z} \sim_\bR 6tA_Z$ such that $h^*(B_Z) - B_X \geq 0$ and there is a $ 0 \leq \Delta'_Z \sim_{\bR} 6tA_Z$ such that $h^*(\Delta'_Z) \geq \Delta'_X$. \par
    
    To prove that $(X,C|_X + B_X + \Delta'_X + \mathbf{M}_X)$ is gklt. By Lemma \ref{klt-property-for-adjunction}, it suffices to prove $(Z,C_Z + B_Z + \Delta'_Z  + \overline{\mathbf{M}}_Z)$ is gklt. This is directly from the construction of $t$. Finally, we know $(V,C + \Delta + \Delta')$ is klt in a punctured neighborhood of $X$ by inversion of adjunction \cite[Theorem 1.1]{FH23}.
\end{proof}

We next give a property of extremal contractions for fibrations of $\mathcal{M}_2(t)$-type.

\begin{thm}\label{Vertical-for-ext-locus}
    Let $d>0$ be an integer, $\Phi\subseteq [0,1]$ be a DCC set, $\epsilon,u,v > 0$ be real numbers. Then, there exists a real number $t > 0$ that satisfies the following. Assume that
    \begin{itemize}
        \item[(1)] $f : (V,C) \to T$ is a weak klt $(d, \Phi,{\leq}u,{\leq}v)$-SMM-type fibration such that $(X,C|_X)$ is $\epsilon$-lc for a general fiber $X$;
        \item[(2)] $f: (V,C) \to T$ satisfies condition $\mathcal{M}_2(t)$. 
    \end{itemize}
    Then, for any extremal contraction $\phi : V \to Y$ that contracts exactly an extremal ray $R \subseteq \overline{NE}(V)$ with $(K_V + C)\cdot R < 0$, the exceptional locus of $\phi$ must be vertical over $T$.
\end{thm}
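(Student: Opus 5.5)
Argue by contradiction. Take $t$ to be the constant of Theorem \ref{lc-property}, possibly shrunk by a fixed factor (say $t/2$), so that two boundaries of size $<t$ can be added and still controlled. Suppose $\phi$ contracts $R$ with $(K_V+C)\cdot R<0$ and the exceptional locus $\mathrm{Exc}(\phi)$ dominates $T$. Since $K_{X}+C|_X$ is nef on a general fiber $X$ (the fibration is a weak SMM-type fibration, so $K_X+C|_X$ is semi-ample), no curve in a general fiber can span $R$. Hence the curves of $R$ through a general point of $\mathrm{Exc}(\phi)$ are horizontal over $T$. In particular, a general fiber $\phi$-fiber $\Sigma$ of $\mathrm{Exc}(\phi)$ meets at least two, in fact infinitely many, general fibers of $f$; choose two different general fibers $X_1,X_2$ both meeting $\Sigma$.

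Apply condition $\mathcal{M}_2(t)$ to $X_1,X_2$. After switching, this gives $\Delta_1\sim_\bQ\delta_1(K_V+C)$ with $X_1$ a pure exceptional lc center with place $E_1$, and $\Delta\sim_\bQ\delta(K_V+C)$ with $X_2$ a pure exceptional lc center and $a(E_1,V,C+\Delta)\le 0$. Consider the pair $(V,C+\Delta)$, or a convex combination of $\Delta_1$ and $\Delta$ if needed. Its non-klt locus contains $X_2$ and also the center of $E_1$, namely $X_1$. Theorem \ref{lc-property} says these pairs are klt in a punctured neighborhood of $X_i$ even after adding any $\Delta'$ with $t(K_V+C)-\Delta'$ pseudo-effective. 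Hence, near $X_1\cup X_2$, the non-klt locus of $(V,C+\Delta)$ is exactly $X_1\sqcup X_2$ locally. So $X_1$ and $X_2$ are isolated connected components of $\mathrm{Nklt}$ in a neighborhood.

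The key step is a connectedness argument relative to $\phi$. Over $Y$, the divisor
$$-(K_V+C+\Delta)\equiv_{Y} -(1+\delta)(K_V+C)$$
is $\phi$-ample, because $(K_V+C)\cdot R<0$. The Kollár–Shokurov connectedness lemma then says $\mathrm{Nklt}(V,C+\Delta)\cap\phi^{-1}(y)$ is connected for every $y\in Y$. Take $y=\phi(\Sigma)$. The fiber $\phi^{-1}(y)$ meets both $X_1$ and $X_2$, and these are disjoint non-klt centers. Connectedness therefore forces a chain of non-klt centers inside $\phi^{-1}(y)$ joining them. Some non-klt center must then leave $X_1$ while passing through a point of $X_1$, which contradicts the statement that $(V,C+\Delta)$ is klt in a punctured neighborhood of $X_1$ (and of $X_2$).

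One subtlety is that the neighborhoods in Theorem \ref{lc-property} are only punctured neighborhoods of the generic point or general fiber. For the chain argument to close, non-klt centers must be excluded near every point of $X_i\cap\phi^{-1}(y)$, not just generically. I would handle this by using the adjunction statement of Theorem \ref{lc-property}: $(X_i,C|_{X_i}+B_{X_i}+\mathbf{M}_{X_i})$ is gklt on all of $X_i$, and inversion of adjunction \cite{FH23} then gives klt-ness of $(V,C+\Delta)$ near all of $X_i$ away from $X_i$ itself.

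I expect the main obstacle to be making sure the $\mathcal{M}_2(t)$ data gives a single boundary whose non-klt locus near $\phi^{-1}(y)$ consists of two disjoint pieces. The role of $E_1$ with $a(E_1)\le0$ is precisely to put $X_1$ into $\mathrm{Nklt}(V,C+\Delta)$ together with $X_2$. If $a(E_1)<0$, $X_1$ may be non-lc; Theorem \ref{lc-property} should still bound the non-klt locus near $X_1$ to $X_1$, using $\Delta-\Delta_1$-type comparisons with $\Delta'=\Delta$ (valid since $\delta<t$). Should this fail, I would instead scale to $(V,C+\Delta_1+\Delta)$ with total coefficient $<2t$ and use the constant from Theorem \ref{lc-property} with $2t$ in place of $t$.
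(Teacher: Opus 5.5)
Your proposal is correct and is the paper's argument: take a $\phi$-fiber through a general point of an $f$-horizontal component of $\mathrm{Exc}(\phi)$, use $\mathcal{M}_2(t)$ to get $\Delta$ with $X_2$ a pure exceptional lc center and $(V,C+\Delta)$ non-klt along $X_1$, invoke Theorem~\ref{lc-property} (including its inversion-of-adjunction clause for a punctured neighborhood of all of $X_2$), and contradict the connectedness lemma, since $-(K_V+C+\Delta)\sim_{\bQ}-(1+\delta)(K_V+C)$ is $\phi$-ample. One adjustment: the punctured-neighborhood statement at $X_2$ alone already makes $X_2\cap\phi^{-1}(y)$ open and closed in $\mathrm{Nklt}(V,C+\Delta)\cap\phi^{-1}(y)$. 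The analogous claim at $X_1$ is not supplied by Theorem~\ref{lc-property}, because $a(E_1,V,C+\Delta)\le 0$ forces $X_1\subseteq\mathrm{Supp}\,\Delta$, and $X_1$ is not a pure lc center of $(V,C+\Delta_1+\Delta)$. So drop the $X_1$ claim, and the shrinking of $t$ along with it.
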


\begin{proof}
    Assume conversly that $L$ is a horizontal$/T$ component of ${\rm Exc}(\phi)$. Define $Z := \phi(L)$. Let $S:=\phi^{-1}(q)$, where $q \in Z$ is a general point. Since $K_V + C$ is nef over the generic point of $T$ and $(K_V + C)\cdot R < 0$ and $L$ is $f$-horizontal, $S$ cannot be contracted to a point by $f$. Since $f:(V,C) \to T$ satisfies $\mathcal{M}_2(t)$, $L$ is $f$-horizontal and $q \in Z$ is general, we can choose two different general points $x_1,x_2 \in S$ with the properties that 
    \begin{itemize}
        \item there exist $ 0 \leq \Delta \sim_{\bR}\delta(K_V + C)$ with $\delta \in (0,t)\cap \bQ$, such that $X_2$ is a pure and exceptional lc center of $(V,C +\Delta)$ and $(V,C + \Delta)$ is not klt at $X_1$, where $X_1,X_2$ is the unique fiber of $f: V \to T$ passing through $x_1,x_2$ respectively.
    \end{itemize}
   
    By Theorem \ref{lc-property}, we can choose a small $t>0$ such that $(V,C +\Delta)$ is klt in a punctured neighborhood of $X_2$. On the other hand, since $-(K_V + C+\Delta)$ is $\phi$-ample, by the connectedness lemma \cite[Theorem 17.4]{Kol93}, the non-klt center of $(V,C + \Delta)$ should be connected along any fiber of $\phi$. However, along $S$, there is a non-klt center passing through $x_2$ and another non-klt center passing through $x_1$, which leads to a contradiction. 
\end{proof}

\begin{proof}[Proof of Theorem \ref{Res-for-Negative-parts}]
    Let $t>0$ be the number defined in Theorem \ref{Vertical-for-ext-locus} holds. We will prove $t$ satisfies the requirements. \par

    \bigskip
    
    {\em Step 1.} In this step, we run MMP$/T$ to construct a birational model $V'$.\par
    Since $f:(V,C) \to T$ admits a klt $(d,\Phi,{\leq}u, {\leq}v)$-SMM-type fibration, by Lemma \ref{MMP-inv-for-SMM-type}, we can run MMP over $T$ to get a weak klt $(d,\Phi,{\leq}u, {\leq}v)$-SMM-type fibration $f' : (V',C') \to T$. By Theorem \ref{MMP-inv-for-C(t)}, $f:(V',C') \to T$ also satisfies the condition $\mathcal{M}_2(t)$. Denote by $C_{X'}$ the restriction of $C'$ to a general fiber $X'$ of $V' \to T$. Take $p: V'' \to V$ and $q: V'' \to V'$ be a common log resolution. Since $K_{X'} + C_{X'}$ is semi-ample, we have $N_{\sigma}(K_{V'} + C';V'/T)$ is vertical over $T$ by Lemma \ref{Zariski-negative-in-family}.\par 
    $$\xymatrix{
        &  V''\ar[dl]_{p}\ar[dr]^{q} & \\
      V\ar@{-->}[rr]^{\varphi} \ar[dr] &  & V' \ar[dl] \\
        &  T  &
    }$$
    
    \bigskip
    
    {\em Step 2.} In this step, we prove that $N_{\sigma}(K_{V'} + C')$ is vertical over $T$. \par 
    Assume that $D$ is a component of $N_{\sigma}(K_{V'} + C')$.
    By the cone theorem, there is an extremal ray $R$ of $\overline{NE}{(V')}$ with $(K_{V'} + C') \cdot R < 0$ and curves in $R$ span $D$. By contraction theorem, there is a contraction morphism $\psi: V' \to Y'$ that contracts exactly the divisor $D$. By Theorem \ref{Vertical-for-ext-locus}, $D$ must be vertical over $T$. Thus, $N_{\sigma}(K_{V'} + C')$ is vertical over $T$.

    {\em Step 3.} In this Step, we finish the proof using the properties of MMP.\par
    By the negativity lemma \cite[Theorem 3.3]{Bir12}, we have 
    $$p^*(K_V +C) = q^*(K_{V'} + C') + E,$$
    where $E \geq 0$ is a $q$-exceptional divisor. Thus, we have
    $$N_\sigma(K_V + C) = p_*(q^*N_\sigma(K_{V'} + C') + E),$$
    and
    $$N_{\sigma}(K_V + C;V/T) =  p_*(q^*N_\sigma(K_{V'} + C';V'/T) + E).$$
    It follows that $N_{\sigma}(K_V + C) - N_\sigma(K_V + C;V/T)$ is vertical over $T$. After restricting to $X$, by Lemma \ref{Zariski-negative-in-family}, we get $N_\sigma(K_V + C)|_X = N_
    \sigma(K_X + C|_X)$. We finish the proof.
\end{proof}

\begin{proof}[Proof of Corollary \ref{Res-for Negative-parts-2}]
    This follows from Theorem \ref{Res-for-Negative-parts} and Lemma \ref{DCC-fixed-volume-imply-epsilon-lc}.
\end{proof}
Next, we give a corollary for pairs of log general type.
\begin{cor}
    Let $d >0$ be an integer, $\Phi \subseteq [0,1]$ be a DCC set, $\epsilon,v >0$ be real numbers. Then, there exists a real number $t > 0$ that satisfies the following. Assume that 
    \begin{itemize}
        \item $(V,C)$ is a projective klt $\bQ$-pair with $K_V + C$ big; 
        \item $f:V \to T$ is fibration. For a general fiber $X$ of $f$, $\dim X = d$, $\vol(K_X + C|_X) \leq v$ and $(X,C|_X)$ is $\epsilon$-lc;
        \item $f:(V,C) \to T$ satisfies condition $\mathcal{M}_2(t)$.
    \end{itemize}
    Then, we have 
    $$N_\sigma(K_V + C)|_X = N_
    \sigma(K_X + C|_X).$$
\end{cor}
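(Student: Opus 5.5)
The plan is to reduce this corollary to Theorem \ref{Res-for-Negative-parts} by checking that the hypotheses make $f:(V,C)\to T$ a fibration that admits a klt $(d,\Phi',{\leq}u,{\leq}v)$-SMM-type structure for a suitable choice of $u$. The key observation is that when $K_X+C|_X$ is big the Iitaka fibration $h:\overline{X}\to Z$ of a stable minimal model is birational. Then a general fiber $F$ of $h$ is a point, so the condition $0\le \vol(A|_F)\le u$ is automatic for any $u>0$, say $u=1$, and we may take $A=0$ (or any integral divisor, since bigness and semi-ampleness over $Z$ are trivial for a birational $h$). Also $\ivol(K_X+C|_X)=\vol(K_X+C|_X)\le v$.

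First I will use that $(V,C)$ is klt with $K_V+C$ big, so $(V,C)$ has a good minimal model by BCHM. Hence for a general fiber $X$ the pair $(X,C|_X)$ is klt with $K_X+C|_X$ big (the restriction of a big divisor to a general fiber is big), and it also has a good minimal, indeed log canonical, model $(\overline{X},C_{\overline{X}})$. This model is a weak lc model of $(X,C|_X)$, and its coefficients lie in the coefficient set of $C|_X$.

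Second I will deal with the coefficient set. The statement fixes a DCC set $\Phi$ implicitly; as written the corollary lists $\Phi$ among its constants, so I will assume the coefficients of $C$ lie in $\Phi$. The coefficients of $C|_X$ on a general fiber are then coefficients of $C$ (the horizontal components restrict with the same multiplicities), so they lie in $\Phi$. With this, $((\overline{X},C_{\overline{X}}),0)$ is a klt $(d,\Phi,{\leq}1,{\leq}v)$-stable minimal model, and $f$ admits a klt $(d,\Phi,{\leq}1,{\leq}v)$-SMM-type fibration. Moreover $(X,C|_X)$ is $\epsilon$-lc by hypothesis.

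Finally I take $t$ to be the constant produced by Theorem \ref{Res-for-Negative-parts} for the data $(d,\Phi,\epsilon,1,v)$, and apply that theorem to conclude. The only delicate point I expect is the formal one: checking that Definition \ref{main-defi-for-smm}(3) allows this degenerate case, namely that $A$ is big over $Z$ when $h$ is birational and $\vol(A|_F)$ makes sense for a point fiber. If that reading is problematic, I would instead choose $A$ to be a general effective ample Cartier divisor pulled back appropriately; it is still big and semi-ample over $Z$, and its restriction to a zero-dimensional fiber has volume bounded by $1$ under the convention that the volume of a point is $1$. A second option is to observe directly that in Theorem \ref{Res-for-Negative-parts} the bound $u$ enters only through Theorem \ref{bounded-base-for-smm}, and that in the birational case the latter reduces to boundedness of klt log-general-type pairs with bounded volume and DCC coefficients (via \cite{HMX14}). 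The rest of the argument goes through unchanged.
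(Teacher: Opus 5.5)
Your proposal is correct and follows essentially the paper's route. The paper's proof is just ``BCHM plus Theorem~\ref{Res-for-Negative-parts}''. You supply the check it leaves implicit: a general fiber admits a klt $(d,\Phi,{\leq}1,{\leq}v)$-stable minimal model, namely its log canonical model with $A=0$, where $h$ is birational so the $u$-condition holds with $u=1$. You are also right to read the statement as assuming the coefficients of $C$ lie in $\Phi$, since $\Phi$ otherwise plays no role and Theorem~\ref{Res-for-Negative-parts} requires it.
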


\begin{proof}
    By \cite[Theorem 1.1]{BCHM10}, it is well-known that $(V,C)$ has a good minimal model. Thus, it follows from Theorem \ref{Res-for-Negative-parts}.
\end{proof}

\section{Extension Theorem for Special Fibrations}\label{Ext-thm-for-special-fibrations}

In this section, we will prove Theorem \ref{main-Extention-for-2-fibers-DCC}, an extension result for certain fibrations. We start with a simpler case where only one general fiber is considered. \par

\begin{thm}\label{extension-for-one-fiber-DCC}
    Let $d>0$ be integers, $\Phi \subseteq[0,1]$ be a DCC set, $\epsilon,u,v > 0$ be real numbers. Then there exists a real number $t>0$ that satisfies the following. Assume that
        \begin{itemize}
            \item[(1)] $f: (V,C) \to T$ admits a klt $(d,\Phi,{\leq}u, {\leq}v)$-SMM-type fibration and $(X,C|_X)$ is $\epsilon$-lc for a general fiber $X$; 
            \item[(2)] $f: (V,C) \to T$ satisfies contidion $\mathcal{M}_2(t)$;
            \item[(3)] $(V,C)$ has a good minimal model.  
        \end{itemize}
    Then, for any integer $m \geq 2$ with $mC$ being integral, there is a surjective map 
    $$H^0(V, m(K_V + C)) \to H^0(X, m(K_X  + C|_X)).$$
\end{thm}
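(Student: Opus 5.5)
Take $t$ to be the minimum of the constants given by Theorem \ref{Res-for-Negative-parts} and Theorem \ref{lc-property} for the data $(d,\Phi,u,v,\epsilon)$, rescaled by a fixed factor (say $1/3$) to leave room for the perturbations below. By Lemma \ref{MMP-inv-for-C(t)} and Lemma \ref{MMP-inv-for-SMM-type} we may replace $(V,C)$ by a log resolution. The pair $(V,C)$ is then log smooth, and $f$ still satisfies $\mathcal{M}_2(t)$ and still admits a klt SMM-type fibration. Both sides of the desired surjection are unchanged, up to the natural isomorphisms, when we pass to such a crepant-type model with exceptional effective parts, since adding exceptional divisors with nonnegative coefficients does not change $H^0$. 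Fix a general fiber $X$. By Lemma \ref{perturb}, condition $\mathcal{M}_1(t)$ provides $0\le \Delta\sim_{\bQ}\delta(K_V+C)$ with $\delta\in(0,t)$ such that $X$ is a pure and exceptional lc center of $(V,C+\Delta)$ with unique lc place $E$. Because $X$ is a fiber, hence has trivial normal bundle and is cut out generically by pullbacks of $T$, we may arrange (after blowing up) that $E$ is the strict transform of $X$ and that the adjunction is the one in Theorem \ref{lc-property}.

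\textbf{Step 1: reduction to the positive part.} Write $K_V+C=P+N$ with $N=N_\sigma(K_V+C)$. By Theorem \ref{Res-for-Negative-parts}, $N|_X=N_\sigma(K_X+C|_X)$. Consequently every section of $m(K_X+C|_X)$ vanishes along $mN|_X$, and it suffices to lift sections of $\rounddown{mP}|_X$, or more precisely of the divisor $L:=m(K_V+C)-\roundup{mN}$ restricted to $X$. Fractional parts are controlled because $(X,C|_X)$ is $\epsilon$-lc and the stable minimal model has a bounded Cartier index along fibers. In any case, $N$ only removes a fixed part common to both sides.

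\textbf{Step 2: Kollár injectivity.} Pass to a good minimal model $(V_0,C_0)$ and its Iitaka contraction $h_0$. On a common resolution, $P$ pulls back from the semi-ample divisor $K_{V_0}+C_0$. Write
\[L-(K_V+C+\Delta+\{\text{fractional corrections}\})\sim_{\bQ}(m-1-\delta)P+(\text{effective part supported in }N),\]
where the effective part is absorbed into the boundary. To keep the boundary klt away from $X$ and to make $X$ (or $E$) the only non-klt place, we use Theorem \ref{lc-property}: any extra divisor $\Delta'\le t(K_V+C)$ not containing $X$ keeps the pair klt in a punctured neighborhood of $X$. The non-klt locus away from $X$ must be removed. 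Here $\mathcal{M}_2(t)$, or a lct rescaling, lets us set $G=E+(\text{vertical pieces over other fibers})$ with $G$ being the round-down of the boundary. Since $m\ge2$, $(m-1-\delta)P$ is semi-ample, with $m-1-\delta\ge 1-t>0$. Exactly as in the proof of Theorem \ref{app-for-kol-inj}, Theorem \ref{kollars-injectivity} then gives the injectivity of $H^1(W,M)\to H^1(W,M+G)$. This yields surjectivity of $H^0(W,M+G)\to H^0(G,(M+G)|_G)$, and projecting to the component $E\cong X$ gives the lift.

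\textbf{Main obstacle.} The hardest step is arranging that $\rounddown{\text{boundary}}$ is exactly $E$ near $X$. The coefficients of $N$ absorbed into the boundary could exceed $1$ along horizontal components; this is precisely where Theorem \ref{Res-for-Negative-parts} enters, since it shows that the horizontal part of $N$ matches the fixed part on $X$, so we subtract it rather than absorb it. The remaining small vertical contributions are handled by the uniform $t$ from Theorem \ref{lc-property}. A further point to check is that $(M+G)|_E$ identifies with $m(K_X+C|_X)-\roundup{mN}|_X$ and not with something smaller; the $\epsilon$-lc hypothesis keeps the round-up discrepancies $<1$ on $E$.
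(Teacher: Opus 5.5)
Your proposal has a genuine gap at the step you yourself flag as the main obstacle, and the fix you propose does not close it. With your choice $L=m(K_V+C)-\lceil mN\rceil$ you get
\[
L-(K_V+C+\Delta)\sim_{\bQ}(m-1-\delta)P-\bigl(\lceil mN\rceil-(m-1-\delta)N\bigr).
\]
So the $N$-supported remainder is anti-effective, with coefficients at most $-(1+\delta)n_i$. It is not an ``effective part'' that can be absorbed, and either way of handling it fails.

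\emph{Adding it to the boundary.} If you add an effective divisor $\ge(1+\delta)N$ to the boundary, as your text suggests, the boundary gets coefficients $\ge1$ along horizontal components of $N$ that meet $E$. On your log resolution such components can exist, e.g.\ exceptional divisors of positive discrepancy over a horizontal centre. Then $\lfloor B\rfloor\neq E$ near $E$, and you cannot lift sections from $E$ alone.

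\emph{Subtracting it from the boundary.} The boundary then acquires negative coefficients along components of $N$ meeting $E$. These must be rounded up into the integral divisor before Koll\'ar's injectivity applies. After that, $(M+G)|_E$ is no longer $m(K_X+C|_X)-\lceil mN\rceil|_X$.

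In both readings, the real content is never verified: that the integral correction, restricted to $E$, lies below the fixed part of $|\lfloor m g^*(K_X+C|_X)\rfloor|$. The slogan ``subtract rather than absorb'' and the appeal to a bounded Cartier index do not supply it.

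There are also two smaller errors. First, $E$ is the strict transform of $X$ only when $\dim T=1$; in general $g=\nu|_E\colon E\to X$ is a contraction with positive-dimensional fibres. Second, passing to a log resolution destroys integrality of $mC$, so you must work with $\lfloor m\nu^*(K_V+C)\rfloor$ and its exceptional fractional part.

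The paper gets past the obstacle with two moves your sketch lacks.

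\emph{Reduction by MMP over $T$.} It uses Lemma \ref{MMP-inv-for-SMM-type} to run an MMP over $T$, not to pass to a resolution, and reduces to a weak SMM-type fibration. This preserves $\mathcal{M}_2(t)$, both section spaces, the restriction map and the integrality of $mC$. It also makes $K_X+C|_X$ semi-ample on a general fibre, so Theorem \ref{Res-for-Negative-parts} yields $N_\sigma(\nu^*(K_V+C))|_E=0$.

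\emph{Negative part inside the round-down.} It sets $L=\lfloor\Gamma+\Theta-F-P+(m-1-\delta)N_\sigma(\nu^*(K_V+C))\rfloor$ with $P=\langle m\nu^*(K_V+C)\rangle$. Then $\lfloor m\nu^*(K_V+C)\rfloor-E-K_W-L$ is $(m-1-\delta)P_\sigma(\nu^*(K_V+C))$ plus an snc fractional divisor. Only fractional parts enter the Koll\'ar boundary, and $E$ is its only reduced component. Moreover $L|_E=\lfloor\Gamma_E+\Theta_E-F_E-P|_E\rfloor\le 0$ by the sub-klt property from Theorem \ref{lc-property}.

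The diagram chase using $-L\le\lceil F+P\rceil$ then gives surjectivity. If you want to keep your idea of not reducing to the nef case, the same $L$ would still need two checks. You would need $\lfloor a+(m-1-\delta)n-p\rfloor\le\lceil mn-p\rceil$ for $a<1$ on each component of $N_\sigma|_E$. You would also need $N_\sigma(\nu^*(K_V+C))|_E=N_\sigma(g^*(K_X+C|_X))$. That comparison is exactly what your sketch would have to supply.
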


\begin{proof}
 
    {\em Step 1.} In this step, we make some reduction for $f : (V,C) \to T$. \par 
    Since $f: V \to T$ admits a klt $(d,\Phi,{\leq}u, {\leq}v)$-SMM-type fibration, by Lemma \ref{MMP-inv-for-SMM-type}, we can run MMP$/T$ to get a weak klt $(d,\Phi,{\leq}u, {\leq}v)$-SMM-type fibration $f' :(V',C') \to T$. Take $p:V'' \to V$ and $q:V'' \to V'$ be a common log resolution. Denote by $X'$ the birational transform of the general fiber $X$. By Theorem \ref{MMP-inv-for-C(t)}, $f': (V',C') \to T$ also satisfies $\mathcal{M}_2(t)$. We can write 
    $$p^*(K_V + C) = q^*(K_{V'} + C') + G$$
    for some $q$-exceptional divisor $G \geq 0$. Thus, for any integer $m \geq 2$ with $mC$ being 
    integral, we see that 
    \begin{align*}
        H^0(V, m(K_V + C)) &\cong H^0(V', m(K_{V'} + C')), \\
        H^0(X,m(K_X + C|_X)) &\cong H^0(X', m(K_{X'} + C|_{X'}))
    \end{align*}
    and the restriction map $$H^0(V, m(K_V + C)) \to H^0(X, m(K_X  + C|_X))$$
    is just the same as the restriction map 
    $$H^0(V', m (K_{V'} + C')) \to H^0(X',m(K_{X'} + C'|_{X'})),$$
    where $X'$ is the birational transform of $X$. Therefore, We can replace $f:(V,C) \to T$ with $f':(V',C') \to T$ and assume that $f:(V,C) \to T$ is a weak klt $(d,\Phi,{\leq}u, {\leq}v)$-SMM-type fibration at the beginning.
    
    \bigskip

    {\em Step 2.} In this step, we construct the number $t$.\par
    By Theorem \ref{Res-for-Negative-parts} and Theorem \ref{lc-property}, there exists a number $t > 0$ such that there exists $0 \leq \Delta \sim_{\bQ}\delta(K_V + C)$ for some $\delta \in (0,t) \cap \bQ$ such that $X$ is a pure and exceptional lc center of  $(V, C + \Delta)$, and $(V, C + \Delta)$ is klt in a punctured neighborhood of $X$ and
    $$N_{\sigma}(K_V + C)|_X = N_{\sigma}(K_X + C|_X).$$
    It remains to check that $t$ satisfies all the required conditions.

    \bigskip
    
    {\em Step 3.} In this step, we take a log resolution of $(V, C + \Delta)$ and give further notations. \par
    Let $\nu: W \to V$ be a log resolution such that the unique lc place $E$ lying over $X$ is a divisor on $W$. By the connectedness lemma \cite[Theorem 17.4]{Kol93}, $g:= \nu|_{E} : E \to X$ is a contraction. We can write uniquely
    $$\nu^*(K_V + C + \Delta) = K_W + E + \Gamma + \Theta - F,$$
    with the property that 
    \begin{itemize}
        \item $\Gamma,\Theta,F \geq 0$ have no common components;
        \item any component of $\Gamma|_E$ is $g$-horizontal and any component of $\Theta|_E$ is $g$-vertical;
        \item all components of $E$, $\Gamma$, $\Theta$, $F$, $N_\sigma(\nu^*(K_V + C))$ have simple normal crossing support;
        \item $P_\sigma(\nu^*(K_V + C))$ is semi-ample since $(V,C)$ has a good minimal model.
    \end{itemize}
    For the convenience, we denote $\Gamma|_E, \Theta|_E,F|_E$ by $\Gamma_E, \Theta_E, F_E$ respectively. By Theorem \ref{lc-property} and the construction of $t$, we see that the sub-pair
    $$(E,\Gamma_E + \Theta_E - F_E)$$
    is sub-klt.
    \bigskip

    {\em Step 4.} In this step, we will get a surjective map of global sections using Theorem \ref{kollars-injectivity}. \par 
    Let $P :=  \fraction{\nu^*(m(K_V + C))}$ be the fractional part. Then, $P$ is $\nu$-exceptional since $mC$ is an integral divisor on $V$. We define 
     $$ L := \rounddown{\Gamma + \Theta -F -P+ (m-1-\delta )N_{\sigma}(\nu^*(K_V + C))}.$$
    Note that the support of $L$ does not contain $E$.\par 
    Since
    \begin{align*}
        &\qquad \rounddown{m(\nu^*(K_V + C))} - E - K_W - L \\
        &\sim_{\bQ} (m - 1 - \delta)\nu^*(K_V + C) + \Gamma  + \Theta - F - P - L\\
        &\sim_{\bQ} (m - 1 - \delta)P_\sigma(\nu^*(K_V + C)) \\
        &\hspace{10mm} +\fraction{\Gamma + \Theta - F - P + (m - 1 -\delta)N_{\sigma}(\nu^*(K_V + C)},
    \end{align*}
    it is a sum of a semi-ample $\bQ$-divisor and a fraction term. We see that our situation satisfies all the condition of Theorem \ref{kollars-injectivity}. Then by Theorem \ref{kollars-injectivity}, we get a surjective map 
    \begin{equation*}
        H^0(W, \rounddown{m(\nu^*(K_V + C))}-L) \to H^0(E, \rounddown{m( g^*(K_X + C|_X) )}- L|_E). 
    \end{equation*}
    \bigskip
    
    {\em Step 5.} In this step, we investigate the integral divisor $L|_E$. \par 
    Since $K_X + C|_X$ is nef, by Theorem \ref{Res-for-Negative-parts}, we have 
    $$N_\sigma(K_V + C)|_X = N_
    \sigma(K_X + C|_X)= 0.$$
    Pulling back to $W$, since $K_X+ C|_X$ is semi-ample, by Lemma \ref{property-for-NZ-decompose}, we see that 
    $$N_\sigma(\nu^*(K_V + C))|_E = N_\sigma(g^*(K_X + C|_X)) = 0.$$
    Thus, 
    \begin{align*}
        L|_E &= \rounddown{ \Gamma_E + \Theta_E - F_E -P|_E + (m - 1 - \delta)N_\sigma(\nu^*(K_V + C))|_E}  \\
        &= \rounddown{\Gamma_E + \Theta_E - F_E - P|_E}.
    \end{align*}
    Since $(E,\Gamma_E + \Theta_E - F_E)$ is log smooth and sub-klt, we see $L|_E \leq 0$. 

    \bigskip
    
    {\em Step 6.} 
    In this step, we finish the proof by chasing the diagram below. \par   
    $$\xymatrix{
        H^0(\rounddown{m \nu^*(K_V + C)} - L ) \ar@{^(->}[d] \ar@{->>}[r]
				& H^0(\rounddown{m g^*(K_X + C|_X)} - L|_E) \ar@{^(->}[d]\\
        H^0(\rounddown{m \nu^*(K_V + C)} + \roundup{F + P}) \ar@{=}[d] \ar[r]
				& H^0(\rounddown{m g^*(K_X + C|_X)} + \roundup{F_E + P|_E}) \\		
        H^0(\rounddown{m \nu^*(K_V+ C)}) \ar@{=}[d] \ar[r] ^r 
				& H^0(\rounddown{m g^*(K_X + C|_X)}) \ar@{=}[d] \ar@{^(->}[u] \ar@/_9.5pc/[uu]^{j}\\
        H^0(m(K_V + C)) \ar[r] 
                & H^0(m(K_X + C|_X )		
    }$$ \par
    In the above diagram, we need to mention that:
    \begin{itemize}
        \item the first row is surjective by Step 4;
        \item the injectivity of the vertical map between first and second line is due to fact that $-L \leq \roundup{F + P}$ and $-L|_E \leq \roundup{F_E + P|_E}$;
        \item the equality in the left sidd between second line and third line is because $F$ and $P$ are $\nu$-exceptional;
        \item the map $j$ is a well-defined injective map induces by the effective divisor $-L|_E$.
    \end{itemize}
    Combining all the results above, a diagram chase yields the surjectivity of the map in the bottom row of the diagram.
\end{proof}

Next, we prove the extention theorem for two general fibers.
\begin{thm}\label{Extention-for-2-fibers-DCC}{\rm (=Theorem \ref{main-Extention-for-2-fibers-DCC})}
    Let $d>0$ be an integer, $\Phi\subseteq[0,1]$ be a DCC set, $u,v,\epsilon>0$ be real numbers. Then, there exists a number $t>0$ that satisfies the following. Assume that
        \begin{itemize}
            \item[(1)] $f: (V,C) \to T$ admits a klt $(d,\Phi,{\leq}u, {\leq}v)$-SMM-type fibration such that $(X,C|_X)$ is $\epsilon$-lc for a general fiber $X$; 
            \item[(2)] $f: (V,C) \to T$ satisfies contidion $\mathcal{M}_2(t)$;
            \item[(3)] $(V,C)$ has a good minimal model.
        \end{itemize}
    Then, for any integer $m \geq 2$ with $mC$ being integral, there is a surjective map 
    $$H^0(V, m(K_V +C)) \to H^0(X_1, m(K_{X_1} + C|_{X_1})) \oplus H^0(X_2, m(K_{X_2} + C|_{X_2})),$$
    where $X_1,X_2$ are two different general fibers.
\end{thm}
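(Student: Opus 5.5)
We follow the proof of Theorem \ref{extension-for-one-fiber-DCC}, replacing the single lc place $E$ by the two lc places over $X_1$ and $X_2$ furnished by condition $\mathcal{M}_2(t)$.

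As in Step 1 of that proof, Lemma \ref{MMP-inv-for-SMM-type} and Lemma \ref{MMP-inv-for-C(t)} let us pass to a weak klt SMM-type fibration. This does not change the spaces of sections or the restriction maps, so $K_{X_i}+C|_{X_i}$ may be taken semi-ample for $i=1,2$. We take $t$ to be the minimum of the constants from Theorem \ref{Res-for-Negative-parts}, Theorem \ref{lc-property} and Theorem \ref{Vertical-for-ext-locus}, all applied with $t/2$ (or a suitable fraction, to absorb sums of two boundaries). After switching $X_1,X_2$, condition $\mathcal{M}_2(t)$ gives $\Delta_1$ and $\Delta$ with $X_1$ the exceptional lc center of $(V,C+\Delta_1)$ with lc place $E_1$, and $X_2$ a pure exceptional lc center of $(V,C+\Delta)$ with $a(E_1,V,C+\Delta)\le 0$.

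The plan is to produce one boundary $\Delta^\ast\sim_{\bQ}\delta^\ast(K_V+C)$ with $\delta^\ast<t$ such that:
\begin{itemize}
\item both $X_1$ and $X_2$ are pure exceptional lc centers of $(V,C+\Delta^\ast)$, with lc places $E_1,E_2$;
\item the pair is klt in punctured neighborhoods of $X_1$ and $X_2$.
\end{itemize}
Here is how we would build it. Start from $\Delta$. Near $X_2$ it already gives the exceptional place $E_2$, and by Theorem \ref{lc-property} it is klt in a punctured neighborhood of $X_2$. Near $X_1$ we have $a(E_1,V,C+\Delta)\le 0$. Take the convex combination $\Delta^\ast=s\Delta_1+(1-s)\Delta$ together with a small generic perturbation as in Lemma \ref{perturb}, and choose $s$ so that $X_1$ becomes an lc center with unique place $E_1$ while $X_2$ stays an exceptional lc center.

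This step is the main obstacle, since $\Delta_1$ may destroy the lc property at $X_2$. To handle it we first try the following. At the generic point of $X_2$, $\Delta_1$ is a general member and contributes nothing to $E_2$, so the log discrepancy of $E_2$ is unaffected. We then rescale with the lct at $\eta_{X_1}$ and $\eta_{X_2}$, as in the proof of Lemma \ref{property-for-tigers}; the inequality $a(E_1,V,C+\Delta)\le 0$ ensures the coefficient needed on $\Delta_1$ is at most $1$. The klt-ness of the perturbed pair in punctured neighborhoods of $X_1$ and $X_2$ comes from Theorem \ref{lc-property}, since all the added divisors are bounded by $t(K_V+C)$.

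Given $\Delta^\ast$, repeat Steps 3--6 of the proof of Theorem \ref{extension-for-one-fiber-DCC}. Take a log resolution $\nu\colon W\to V$ on which $E_1,E_2$ are disjoint divisors (they lie over disjoint fibers) and write
$$\nu^*(K_V+C+\Delta^\ast)=K_W+E_1+E_2+\Gamma+\Theta-F.$$
Define $L$ exactly as before, using $(m-1-\delta^\ast)N_\sigma(\nu^*(K_V+C))$. Theorem \ref{kollars-injectivity} applies because $P_\sigma$ is semi-ample (good minimal model). It gives surjectivity of
$$H^0(W,\rounddown{m\nu^*(K_V+C)}-L)\to H^0(E_1\sqcup E_2,\ \cdot\ )=H^0(E_1,\cdot)\oplus H^0(E_2,\cdot).$$
By Theorem \ref{Res-for-Negative-parts}, $N_\sigma(K_V+C)|_{X_i}=N_\sigma(K_{X_i}+C|_{X_i})=0$, and sub-kltness of $(E_i,(\Gamma+\Theta-F)|_{E_i})$ comes from Theorem \ref{lc-property} together with Lemma \ref{basic-for-adjunction-lc-centers}. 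These give $L|_{E_i}\le 0$ for $i=1,2$. The same diagram chase as in Step 6, carried out componentwise on the direct sum, then yields the desired surjection.
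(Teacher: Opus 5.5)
\paragraph{The gap: the common boundary $\Delta^\ast$ cannot be built from $\mathcal{M}_2(t)$.} Your argument needs one boundary for which $X_1$ and $X_2$ are both pure exceptional lc centers. Condition $\mathcal{M}_2(t)$ does not give such a boundary, and your convex-combination recipe does not produce one.

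Log discrepancies are affine in the boundary, and $a(E_2,V,C+\Delta)=0$. Hence
\[
a(E_2,V,C+s\Delta_1+(1-s)\Delta)=s\,a(E_2,V,C+\Delta_1).
\]
Under your own assumption that $\Delta_1$ contributes nothing to $E_2$, this equals $s\,a(E_2,V,C)>0$ for every $s>0$. So $X_2$ stops being an lc center as soon as $\Delta_1$ enters.

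At $s=0$ you are back to $\Delta$, and only $a(E_1,V,C+\Delta)\le 0$ is known, so $\Delta$ may be non-lc along $X_1$. A small perturbation cannot fix either defect. Adding $\Delta_1$ instead of interpolating only lowers $a(E_1,\cdot)$ further, and scaling $\Delta$ down to $\lct_{\eta_{X_1}}$ makes $X_2$ klt.

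The assumption that $\Delta_1$ is ``general'' near $X_2$ is also unjustified. $\Delta_1$ is whatever $\mathcal{M}_2(t)$ supplies. In the proof of Lemma~\ref{property-for-tigers}, the boundary playing this role is a perturbation of $c(\Delta_1+\Delta_2)$ (notation there), which contains a divisor having $X_2$ as an lc center. The lct-rescaling in that lemma yields only the asymmetric configuration, which is exactly why $\mathcal{M}_2(t)$ is stated asymmetrically. Given a valid $\Delta^\ast$, your Steps 3--6 would go through, so everything hinges on this step.

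\paragraph{The missing idea: $X_1$ need not be an lc center of the boundary used for $X_2$.} The paper splits the problem into two parts.

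First, Theorem~\ref{extension-for-one-fiber-DCC}, applied via $\Delta_1$, gives surjectivity onto $H^0(X_1,\cdot)$. Writing $(a,b)=(a,\sigma_a|_{E_2})+(0,\,b-\sigma_a|_{E_2})$ then reduces the claim to surjectivity of
\[
H^0(W,\rounddown{m\nu^*(K_V+C)}-E_1)\to H^0(E_2,\rounddown{m g_2^*(K_{X_2}+C|_{X_2})}).
\]

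Second, for this map the paper writes
\[
\nu^*(K_V+C+\Delta)=K_W+E_2+sE_1+\Gamma+\Theta-F
\]
with $s\ge 1$ and puts $sE_1$ into $L$. Kollár injectivity is then applied with respect to $E_2$ alone. Any non-lc behaviour along $X_1$ is harmless, because the round-down absorbs it. Only kltness in a punctured neighborhood of $X_2$, from Theorem~\ref{lc-property}, is needed to get $L|_{E_2}\le 0$. Since $\rounddown{s}\ge 1$, the lifted sections automatically vanish along $E_1$. Thus $a(E_1,V,C+\Delta)\le 0$ is used to force vanishing along $E_1$, not to create a second lc center.
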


\begin{proof}  

    {\em Step 1.}  In this step, we rewrite the condition $\mathcal{M}_2(t)$ and make some reduction. \par 

    By the definition of condition $\mathcal{M}_2(t)$, after possibly switching $X_1$ and $X_2$, we can assume that 
    \begin{itemize}
        \item[(1)] there exists a $\bQ$-divisor $0 \leq \Delta_1 \sim_{\bQ} \delta_1(K_V + C)$ for some $\delta_1 \in (0,t)\cap \bQ$, such that $X_1$ is a pure and exceptional lc center of $(V, C +\Delta_1)$ with unique lc place $E_1$ over $X_1$;
        \item[(2)] there exists a $\bQ$-divisor $0 \leq \Delta \sim_{\bQ} \delta(K_V + C)$ for some $\delta \in (0,t) \cap \bQ$, such that $X_2$ is a pure and exceptional lc center of $(V,C + \Delta)$ and $a(E_1,V,C + \Delta) \leq 0$ holds.
    \end{itemize}

    Take a log resolution $\nu: W \to V$ as in Step 4 in proof of Theorem \ref{extension-for-one-fiber-DCC} such that $E_1$ is a divisor on $W$ and denote by $g_1 : E_1 \to X_1$ the restriction of $\nu$ to $E_1$. We see that $g_1$ is a contraction. \par
By Theorem \ref{extension-for-one-fiber-DCC}, we get a surjective map 
\begin{equation}
    H^0(W, \nu^*(m(K_V + C))) \to H^0(E_1,g^*(m(K_{X_1} + C|_{X
    _1}))). \label{q1}
\end{equation}

Denote by $E_2$ the unique lc center of $(V, C + \Delta)$ lying over $X_2$. After replacing $\nu : W \to V$ with a further resolution, we assume $E_2$ is also a divisor on $W$ and denote by $g_2 : E_2 \to X_2$ the restriction of $\nu$ to $E_2$. we can assume that $a(E_1,V,C+\Delta)\leq 0$.\par
Note that the surjectivity of the restriction map
    $$H^0(m(K_V +C)) \to H^0(m(K_{X_1} + C|_{X_1})) \oplus H^0(m (K_{X_2} + C|_{X_2}))$$
    is equivalent to the surjectivity of 
    \begin{equation}
        H^0(\rounddown{\nu^*(m(K_V + C))})\to \bigoplus_{i=1,2} H^0(E_i, \rounddown{g_i^*(m (K_{X_i} + C|_{X_i}))}). \label{q2}
    \end{equation}

    Combining with \eqref{q1}, we only need to prove that for any section $u$ in $$H^0(E_2, \rounddown{g_2^*(m (K_{X_2} + C|_{X_2}))}),$$ the element $(0, u)$ in the right-hand-side of \eqref{q2} has a pre-image. In other words, we only need to prove 
    $$H^0(W, \rounddown{\nu^*(m(K_V+C))} - E_1) \to H^0(E_2, \rounddown{g_2^*(m (K_{X_2} + C|_{X2}))})$$
    is surjective. \par
    \bigskip

    {\em Step 2.} In this step, we prove the above surjective map. \par
    We write uniquely
    $$\nu^*(K_V + C + \Delta) = K_W + E_2 + sE_1 + \Gamma  + \Theta - F,$$
    where the following properties holds :
    \begin{itemize}
        \item $\Gamma,\Theta,F \geq 0$ have no common components and do not contain $E_1$, $E_2$ as components;
        \item any component of $\Gamma|_{E_2}$ is $g$-horizontal and any component of $\Theta|_{E_2}$ is $g_2$-vertical;
        \item all components of $E_1$, $E_2$, $\Gamma$, $\Theta$, $F$, $N_\sigma(\nu^*(K_V + C))$ have simple normal crossing supports. 
    \end{itemize}
    We denote $\Gamma|_{E_2}$, $\Theta|_{E_2}$, $F|_{E_2}$ by $\Gamma_{E_2}$, $\Theta_{E_2}$, $F_{E_2}$ respectively. 
    Let $P :=  \fraction{\nu^*(m(K_V + C))}$, the fractional part, which is $\nu$-exceptional and define
    $$L := \rounddown{\Gamma + \Theta + sE_1-F - P + (m -1-\delta )N_{\sigma}(\nu^*(K_V + C))}.$$
    As in Step 4 of proof of Theorem \ref{extension-for-one-fiber-DCC}, using Theorem \ref{kollars-injectivity}, we also get the surjective map
    $$H^0(W, \rounddown{m(\nu^*(K_V + C))}-L) \to H^0(E_2, \rounddown{m( g_2^*(K_{X_2} + C|_{X_2}))} - L|_{E_2}).$$

    Since $s \geq 1$ by $a(E_1,V,C + \Delta) \leq 0$,  we have $- L \leq -E_1 + \roundup{F + P} $. It follows that
    $$H^0( \rounddown{m \nu^*(K_V + C) }- L) \cong H^0(\rounddown{m \nu^*(K_V + C)} - E_1 + \roundup{F + P}).$$ 
    
    Finally, we can finish the proof by chasing the following diagram: 
        $$\xymatrix{
        H^0(\rounddown{m \nu^*(K_V + C)} - L ) \ar@{^(->}[d] \ar@{->>}[r]
				& H^0(\rounddown{m g_2^*(K_{X_2} + C|_{X_2})} - L|_{E_2}) \ar@{^(->}[d]\\
        H^0(\rounddown{m \nu^*(K_V + C)}  - E_1 + \roundup{F + P}) \ar@{=}[d] \ar[r]
				& H^0(\rounddown{m  g_2^*(K_{X_2} + C|_{X_2})} + \roundup{F_{E_2} + P|_{E_2}}) \\		
        H^0(\rounddown{m  \nu^*(K_V+ C)}- E_1) \ar[r] ^r 
				& H^0(\rounddown{m  g_2^*(K_{X_2} + C|_{X_2})})  \ar@{^(->}[u]
    }$$ \par
    Here we need to mention that the equality in the bottom of left-hand-side is because both $E_1$ and $F$ are $\nu$-exceptional and $E_1$ and $F + P$ have no common components.
\end{proof}

We end this section with an application to varieties of general type. \par

\begin{cor}
    Let $d>0$ be an integer, $\Phi \subseteq[0,1]$ be a DCC set, $\epsilon,v>0$ be real numbers. Then, there exists a positive number $t$ that satisfies the following. Assume that
    \begin{itemize}
        \item[(1)] $(V,C)$ is a projective klt $\bQ$-pair with $K_V + C$ big; 
        \item[(2)] $f:V \to T$ is a fibration. For a general fiber $X$, $\dim X = d$, $\vol(K_X + C|_X) \leq v$ and $(X,C|_X)$ is $\epsilon$-lc;
        \item[(3)] $f:(V,C) \to T$ satisfies condition $\mathcal{M}_2(t)$;
    \end{itemize}
    Then for any integer $m \geq 2$ with $mC$ being integral, there is a surjective map 
    $$H^0(m(K_V +C)) \to H^0(m(K_{X_1} + C|_{X_1})) \oplus H^0(m (K_{X_2} + C|_{X_2})),$$
    where $X_1,X_2$ are two different general fibers.
\end{cor}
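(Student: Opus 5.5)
The plan is to deduce the corollary directly from Theorem \ref{Extention-for-2-fibers-DCC}. That requires checking its three hypotheses, using the same $t$ it provides.

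First, hypothesis (3): since $(V,C)$ is a projective klt $\bQ$-pair with $K_V+C$ big, \cite[Theorem 1.1]{BCHM10} gives a good minimal model. This is exactly as in the proof of the analogous corollary in Section \ref{Res-for-N-parts}.

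Next, hypothesis (1): I will show that $f:(V,C)\to T$ admits a klt $(d,\Phi',{\leq}u,{\leq}v)$-SMM-type fibration for suitable data. Here a subtlety appears, since the statement does not explicitly impose that the coefficients of $C|_X$ lie in $\Phi$. I will read the statement as assuming this, as is implicit in the DCC setting; coefficients of $C|_X$ for a general fiber equal those of the horizontal part of $C$. For a general fiber $X$, $K_X+C|_X=(K_V+C)|_X$ is big, because $K_V+C$ is big and $X$ is general. Then \cite{BCHM10} applied to $(X,C|_X)$ gives a log canonical model $(X_c,C_c)$ with $K_{X_c}+C_c$ ample. The contraction $h:X_c\to Z$ it defines is therefore birational, so $Z=X_c$ and a general fiber $F$ of $h$ is a point. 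Taking $A=0$ (integral, and big and semi-ample over $Z$ since $h$ is birational), we get $\vol(A|_F)=0\leq u$ for any fixed $u>0$, say $u=1$. Moreover $\ivol(K_{X_c}+C_c)=\vol(K_X+C|_X)\leq v$, and $(X_c,C_c)$ is klt. Hence $((X_c,C_c),0)$ is a klt $(d,\Phi,{\leq}1,{\leq}v)$-stable minimal model. It is a weak lc model of $(X,C|_X)$, by the standard properties of log canonical models of klt pairs of log general type. The $\epsilon$-lc condition on $(X,C|_X)$ is given directly.

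Finally, hypothesis (2): let $t$ be the number from Theorem \ref{Extention-for-2-fibers-DCC} for the data $(d,\Phi,1,v,\epsilon)$. Then condition $\mathcal{M}_2(t)$ is exactly assumption (3) of the corollary. With all three hypotheses verified, the theorem yields the surjection for every $m\geq 2$ with $mC$ integral.

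The only step needing care is the verification that the ample model is a stable minimal model of the required form. This depends on two points: that the degenerate choice $A=0$ is allowed by the definition (it allows $\vol(A|_F)=0$), and on the implicit coefficient condition in $\Phi$. If one prefers to avoid $A=0$, one can instead take $A$ to be any integral divisor: over a birational base every divisor is big and semi-ample over $Z$ in the relative sense, and the fiber volume over a point is $0$ anyway.
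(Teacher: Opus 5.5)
Your proposal is correct and follows the paper's route: the paper's proof is the one-line deduction from Theorem \ref{Extention-for-2-fibers-DCC} together with \cite[Theorem 1.1]{BCHM10}, and your check that the log canonical model of a general fiber, with $A=0$, is a klt $(d,\Phi,{\leq}1,{\leq}v)$-stable minimal model is exactly the verification the paper leaves implicit. You are also right that the statement omits the hypothesis that the coefficients of $C$ lie in $\Phi$, and that it must be read as implicitly assumed, since Theorem \ref{Extention-for-2-fibers-DCC} genuinely needs it through the boundedness of the base.
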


\begin{proof}
    This is immediate from Theorem~\ref{Extention-for-2-fibers-DCC} and \cite[Theorem 1.1]{BCHM10}.
\end{proof}

\section{Proofs and Applications.}\label{Appications}
    This section is devoted to proving main results. As preparation, we first establish Theorem~\ref{main-Effective-Iitaka}.

    \begin{proof}[Proof of Theorem \ref{main-Effective-Iitaka}.]
        Let $h : V \to Y$ be the contraction defined by the semi-ample divisor $K_V + C$ and write 
        $$K_V + C \sim_{\bR}h^*(K_Y + B_Y + M_Y)$$
        as in Section \ref{adjunction} to get a generalized pair $(Y,B_Y + \mathbf{M}_Y)$.\par
        By the proof of \cite[Theorem 7.4]{Bir21b} or \cite[Theorem 3.2]{Zhu25}, there exists an integer $q>0$, a DCC set $\Psi$, and a finite set $I$, such that $$K_V + C \sim_{q}h^*(K_Y + B_Y + M_Y),$$
        the coefficients of $B_Y$ belong to $\Psi$ and $M_Y = \sum_{i}{p_i M_{Y,i}}$, where all $M_{Y,i}$ are b-Cartier-nef b-divisors and $p_i \in I$. Thus, for any integer $r > 0$, we have 
        $$H^0(\rounddown{rq(K_V + C)}) \cong H^0(\rounddown{rq(K_Y + B_Y + M_Y)}).$$
        By \cite[Theorem 8.2]{BZ16}, there exists a number $m_0$ that depends only on $n,\Psi,I$ (hence only depends on $n,\Phi ,u$) such that $|\rounddown{m_0(K_Y + B_Y + M_Y)}|$ gives a birational map. Replacing $m_0$ with $qm_0$, we see $m_0$ satisfies the requirement.
    \end{proof}
        
    Here we recall that for any integer $n>0$, DCC set $\Phi \subseteq[0,1]$ and $u >0$, we denote by $r(n,\Phi,u)$ the minimal integer $m_0>0$ such that Theorem \ref{main-Effective-Iitaka} holds. \par

    \begin{proof}[Proof of Theorem \ref{main-for-Iitaka-large-volume}]
        By Theorem \ref{Extention-for-2-fibers-DCC}, for any integer $d>0$ and real number $v>0$, there is a uniform number $t(d,\Phi,u,v,\epsilon)$ that satisfies all the required conditions in Theorem \ref{Extention-for-2-fibers-DCC}, where $\Phi,u,\epsilon$ are given data. We construct a function 
        \begin{align*}
            \lambda: \bZ_{>0} \times \bR_{>0} &\to \bR_{>0} \\
                                (d, v)    &\mapsto t(d,\Phi,u,v,\epsilon).
        \end{align*}
        For the function $\lambda$, Theorem \ref{construct-fibration} now gives constants $v_{n-1}>\dots>v_1>0$ and $v_0>0$ such that for any $(V,C)$ that admits a klt $(n,\Phi,{\leq}u,{\geq}v_0)$-stable minimal model, after replacing $(V,C)$ with a crepant pair,  we are in one of the following two situations:

        \begin{itemize}
            \item[(1)] $|m(K_V + C)|$ induces an Iitaka fibration since $m \geq 2$ and $mC$ is integral;
            \item[(2)] We get a fibration $f:(V,C) \to T$ that satisfies $\mathcal{M}_2(\lambda(k,v_k))$ and $\ivol(K_X + C|_X) \leq v_k$ for a general fiber $X$.
        \end{itemize}
        In Case (1), there is nothing more to prove. In case (2), $f:(V,C) \to T$ admits a klt $(k,\Phi,{\leq}u, {\leq}v_k)$-SMM-type fibration. Since $m$ is divisible by $r(k,\Phi,u)$, the linear system $|m(K_X + C|_X)|$ induces an Iitaka fibration on a general fiber $X$. Hence, Theorem \ref{Extention-for-2-fibers-DCC} implies that $|m(K_V + C)|$ induces an Iitaka fibration on $V$.  
    \end{proof}

    We next apply the result to pairs of log general type. In the general-type case, the parameter $u$ is not involved, so we abbreviate $r(n,\Phi,u)$ to $r(n,\Phi)$.

    \begin{cor}\label{general-type}
        Let $n>0$ be an integer and $\Phi \subseteq [0,1]$ be a DCC set, $\epsilon>0$ be a real number. Then, there exists a real number $v>0$ that satisfies the following. Assume that
        \begin{itemize}
            \item $(V,C)$ is an $\epsilon$-lc projective pair with the coefficients of $C$ belong to $\Phi$;
            \item $\vol(K_V +C) \geq v$.
        \end{itemize}
        Then, for any $m > 0$ such that $mC$ is integral and $m$ is divisible by $r(k,\Phi)$ for any $1 \leq k \leq n-1$, $|m(K_V + C)|$ induces a birational map.        
    \end{cor}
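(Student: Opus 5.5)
The plan is to deduce Corollary \ref{general-type} from Theorem \ref{main-for-Iitaka-large-volume}, where the general-type case is the one in which the stable minimal model has $\dim Z = \dim X$. First I will show that $(V,C)$ admits a klt $(n,\Phi,{\leq}u,{\geq}v)$-stable minimal model for a suitable fixed $u$. Since $(V,C)$ is klt and $K_V+C$ is big, \cite[Theorem 1.1]{BCHM10} gives a log canonical model $(X,B)$ on which $K_X+B$ is ample. Then $(X,B)$ is a weak lc model of $(V,C)$, the coefficients of $B$ lie in $\Phi$, and the pair is $\epsilon$-lc because passing to a minimal or canonical model does not decrease log discrepancies. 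The contraction $h$ defined by $K_X+B$ is the identity, so its general fiber is a point. Taking $A=0$, which is trivially big and semi-ample over $Z=X$, we get $\vol(A|_F)=0\leq u$ for any $u>0$; fix $u=1$. Moreover $\ivol(K_X+B)=\vol(K_V+C)\geq v$. Hence $(V,C)$ admits a klt $(n,\Phi,{\leq}1,{\geq}v)$-stable minimal model.

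Next I apply Theorem \ref{main-for-Iitaka-large-volume} with data $(n,\Phi,u=1,\epsilon)$. This produces $v$ such that, for $m$ with $mC$ integral and $r(k,\Phi,1)\mid m$ for $1\le k\le n-1$, the system $|m(K_V+C)|$ induces an Iitaka fibration. Since $K_V+C$ is big, an Iitaka fibration is a birational map.

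It remains to identify $r(k,\Phi,1)$ with $r(k,\Phi)$. This is the main point to be careful about. I would argue that $r(k,\Phi)$ is defined as the constant of Theorem \ref{main-Effective-Iitaka} restricted to pairs of log general type, where the $u$-condition is vacuous. Along the proof of Theorem \ref{main-for-Iitaka-large-volume}, the fibers produced by Theorem \ref{construct-fibration} for big $K_V+C$ are themselves of log general type. Indeed, there $d=n$, so $\dim X=k$ and $K_X+C|_X$ is big. The divisibility needed is therefore only by the effective birationality constant for $\epsilon$-lc, or respectively klt, log general type pairs of dimension $k$ with coefficients in $\Phi$, which is $r(k,\Phi)$.

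A further point needs checking: whether the crepant replacement and the SMM-type fibration in that proof preserve the hypotheses with $u$ independent of the data. Taking $A=0$ on the log canonical model of each fiber makes this uniform, so I expect no further obstacle.
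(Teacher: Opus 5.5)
Your proposal follows the paper's route: the paper just says the corollary is immediate from Theorem \ref{main-for-Iitaka-large-volume}. Your check that a big $\epsilon$-lc pair admits a klt $(n,\Phi,{\leq}1,{\geq}v)$-stable minimal model (its log canonical model with $A=0$), and that only log general type fibers, hence only the constants $r(k,\Phi)$, enter the argument, is exactly the content that sentence leaves implicit. One caveat, shared with the paper: Theorem \ref{main-for-Iitaka-large-volume} requires $n>1$, so your argument proves the corollary only for $n\ge 2$, and for $n=1$ the statement as written actually fails (take $C=0$, $m=1$ and $V$ a hyperelliptic curve of large genus).
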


    \begin{proof}
        It follows immediately from Theorem~\ref{main-for-Iitaka-large-volume}.
    \end{proof}

    \begin{proof}[Proof of Theorem \ref{main-Effective-Iitaka-no-boundary}]
        After replacing $V$ with its good minimal model, we can assume that $(V,0)$ is a klt $(n,\{0\},u)$-stable minimal model at the beginning. \par
        Let $h: V \to Y$ be the contraction defined by the semi-ample divisor $K_V$ and write 
        $$K_V \sim_{\bQ} h^*(K_Y + B_Y + M_Y)$$
        as in Section \ref{adjunction} to get a generalized pair $(Y,B_Y + \mathbf{M}_Y)$. We define $D:= K_Y + B_Y + M_Y$. Note that $D$ is an ample $\bQ$-divisor. \par
        As in the proof of Theorem \ref{main-Effective-Iitaka}, we know that there exists a number $m_0$ that depends only on $n,u$, such that $|\rounddown{m_0 D}|$ gives a birational map. By \cite[Lemma 2.3.4 (2)]{HMX13}, we see $m'D$ is potentially birational for any integer $m' \geq (2n+1)m_0$. Thus, by Theorem \ref{app-for-kol-inj}, we get $|mK_V|$ induces an Iitaka fibration for any integer $m \geq (2n+1)m_0 + 1$. We finish the proof.
    \end{proof}

    We recall that for any integer $n>0$ and real number $u > 0$, $s(n,u)$ denotes the minimal integer $s_0>0$ for which Theorem \ref{main-Effective-Iitaka-no-boundary} holds. \par

    \begin{proof}[Proof of Theorem \ref{main-no-boundary}]
        When $m$ is divisible by $r(k,\{0\},u)$ for any $1 \leq k \leq n-1$, it follows by Theorem \ref{main-for-Iitaka-large-volume}. When $m \geq s(n-1,u)$, it follows by Theorem \ref{construct-fibration}, Theorem \ref{Extention-for-2-fibers-DCC} as in the proof of Theorem \ref{main-for-Iitaka-large-volume}.
    \end{proof}
    
\section{Examples}\label{examples}
This section presents several illustrative examples. The first example shows that in Theorem \ref{Res-for-Negative-parts}, \ref{extension-for-one-fiber-DCC} and \ref{Extention-for-2-fibers-DCC}, the number $t$ must depend on  $v$, the upper bound for the Iitaka volume of a general fiber. In other words, one cannot expect to get a number $t$ that works uniformly for all $v$. 
\begin{eg}\em 
    Let $f:V \to T$ be the Hirzebruch surface $\mathbb{F}_1$ over $\bP^1$. Let $X$ be a general fiber, then $X \cong \bP^1$. Let $E \subseteq V$ be the unique section with negative self-intersection. Let $\sigma : V \to \bP^2 $ be the blow-down morphism. Write $H$ for the pullback of a hyperplane section from $\bP^2$. \par 
    Take $l\geq 6$ to be an integer. Let $H_1,\cdots,H_{l}$ be general elements in $|H|$. Let $C:= \frac{1}{2}\sum_{i=1}^l H_i + \frac{1}{2} E$. We claim that $$H^0(2(K_V + C)) \to H^0(2(K_X + C|_X))$$
    is never surjective.\par
    Indeed, $K_V \sim -3H + E$ gives $2(K_V+C)\sim (l-6)H+3E$, so the Nakayama-Zariski negative part is $N_\sigma(2(K_V+C))=3E$. Hence, every effective divisor linearly equivalent to $2(K_V+C)$ contains $E$ as a component. On the other hand, $\deg(2(K_X+C|_X))=l-3>0$, so the linear system $|2(K_X+C|_X)|$ is base-point-free. Consequently, 
    $$N_\sigma(K_V +C) |_X \ne N_\sigma(K_X + C|_X)$$ and the restriction map cannot be surjective.\par
    
    In fact, when $l$ tends to infinity, any number $t>0$ for which Theorems \ref{Res-for-Negative-parts}, \ref{extension-for-one-fiber-DCC} and \ref{Extention-for-2-fibers-DCC} hold must depend on $l$ here and tend to zero.
\end{eg}

Next example shows that in Theroem \ref{extension-for-one-fiber-DCC} and \ref{Extention-for-2-fibers-DCC}, we cannot weaken the condition ``$f :(V,C) \to T$ satisfies $\mathcal{M}_{2}(t)$" to ``there is a $ 0 \leq \Delta \sim_{\bQ}t(K_V + C)$ with $\mult_{\eta_{X_i}}(\Delta) \geq n$ for $i=1,2$".

\begin{eg}\em
    Let $X$ be the Hirzebruch surface $\mathbb{F}_1$ with a section curve $C$ and a fiber $F$. We know that $(C^2) = -1$, $(F^2) = 0$ and $(C\cdot F) =1$. We see $K_X \equiv -2C - 3F$. Since $6C + 7F$ is ample, we can take a general $H \in |6C + 7F|$, and let $L_X:= \frac{1}{2}H$. Then $(X,L_X)$ is klt. By calculation, we see $2(K_X + L_X) \sim 2C + F$. Hence, $\vol(K_X + L_X)$ is bounded and
    $$H^0(X, 2(K_X + L_X)) \to H^0(F,2(K_F + L_X|_F))$$
    cannot be surjective. \par
    Let $T = \bP^1$ and $V:= X \times T$. Denote by $p_1 : V \to X$ and $p_2: V \to T$ the projection maps. Let $L':= p_1^*(L_X)$. For any positive integer $l$, take $2l + 4$ different general points $\{q_i\}_{i=1}^{2l+4}$ on $T$ and denote divisors $p_2^*(\{q_i\})$ by $X_i$. Let $L:= L' + \frac{1}{2}\sum_{i = 1}^{2l+4} X_i$. Then we see that there is a $ 0 \leq \Delta \sim_{\bQ} \frac{n}{l}(K_V + L)$ with $\mult_{\eta_X}(\Delta) \geq n$. However, the restriction map 
    $$H^0(V, 2(K_V +L) ) \to H^0(F, 2(K_F + L|_F))$$
    is never surjective for any integer $l >0$.  
\end{eg}


\end{document}